\definecolor{orcidlogocol}{HTML}{A6CE39}
\definecolor{bluecite}{HTML}{0875b7}
\newcommand{\defeq}{\stackrel{\textup{def}}{=}}
\newcommand{\diff}{\,\mathrm{d}}
\DeclareMathOperator{\di}{div}
\newtheorem{theorem}{Theorem}[section]
\newtheorem*{theorem*}{Theorem}
\newtheorem{proposition}[theorem]{Proposition}
\theoremstyle{definition}
\newtheorem{remark}[theorem]{Remark}
\title[Sharp Hardy and spectral gap inequalities on special irreversible Finsler manifolds]{Sharp Hardy and spectral gap inequalities\\ on special irreversible Finsler manifolds}
\author{S\'andor Kaj\'ant\'o}\address{ \textsc{S\'andor Kaj\'ant\'o}: Department of Mathematics and Computer Science, Babe\c s-Bolyai University, Cluj-Napoca, Romania \& Institute
of Applied Mathematics, Óbuda University, Budapest, Hungary. ORCID: \href{https://orcid.org/0009-0005-4141-9734}{0009-0005-4141-9734}.}
\email{sandor.kajanto@ubbcluj.ro}
\date{\today}
\thanks{The author is supported by the János Bolyai Research Scholarship of the Hungarian Academy of Sciences.}
\subjclass[2020]{26D10, 35P15, 53B40, 58C40, 58J60}
\keywords{Finlser manifolds, Hardy inequalities, spectral gap estimates, Riccati pairs, sharpness}
\begin{document}
\begin{abstract}
    The sharpness of various Hardy-type inequalities is well-understood in the reversible Finsler setting; 
    while infinite reversibility implies the failure of these functional inequalities, cf.~Kristály, Huang, and Zhao [\emph{Trans.\ Am.\ Math.\ Soc.}, 2020]. However, in the remaining case of irreversible manifolds with finite reversibility, there is no evidence on the sharpness of Hardy-type inequalities. In fact, we are not aware of any particular examples where the sharpness persists. In this paper we present two such examples involving two celebrated inequalities: the classical/weighted Hardy inequality (assuming non-positive flag curvature) and the McKean-type spectral gap estimate (assuming strong negative flag curvature). In both cases, we provide a family of Finsler metric measure manifolds on which these inequalities are sharp. We also establish some sufficient conditions, which guarantee the sharpness of more involved Hardy-type inequalities on these spaces. Our relevant technical tool is a Finslerian extension of the method of Riccati pairs (for proving Hardy inequalities), which also inspires the main ideas of our constructions. 
\end{abstract}

\vspace*{-1cm}
\maketitle
\vspace{-0.5cm}
\tableofcontents
\vspace{-0.7cm}

\section{Introduction} \label{sec:intro}
The celebrated \emph{Hardy inequality}, see Hardy~\cite{hardy1920note}, states that if \(u\) is a smooth, compactly supported function on \(\Omega\subseteq \mathbb{R}^n\), with \(n\ge3\), then the \(L^2\)-norm of the singular term \(u(x)/|x|\) is controlled by the \(L^2\)-norm of \(|\nabla u(x)|\). More precisely, one has
\begin{equation}\label{eq:intro:Hardy}
    \int_{\Omega} |\nabla u(x)|^2\diff x\geq \frac{(n-2)^2}{4}\int_{\Omega} \frac{|u(x)|^2}{|x|^2}\diff x,\qquad \forall u\in C_0^\infty(\Omega).
\end{equation}
Moreover, the above inequality is \emph{sharp}, in the sense that the constant \(\frac{(n-2)^2}{4}\) cannot be improved.

Due to the lack of an extremal function (for which equality holds), there exist various extensions and improvements of inequality~\eqref{eq:intro:Hardy}, which turned out to be the fundamental building blocks of various results concerning elliptic problems; for   comprehensive discussions, we refer to the monographs by Balinsky, Evans, and Lewis~\cite{balinsky2015analysis} and Ghoussoub and Moradifam~\cite{ghoussoub2013functional}. Over the years, the mathematical community has answered many questions regarding Hardy-type inequalities, but several still remained open. The present paper focuses on such an unanswered problem, which is the issue of sharpness on non-compact and non-reversible Finsler manifolds with finite reversibility. Before presenting our main results, let us enlist some relevant facts. 

The natural habitat for a Finslerian Hardy-type inequality is a \emph{Finsler metric measure manifold}~\((M,F,\mathsf{m})\) abbreviated by FMMM, which is a differentiable manifold~\(M\) equipped with a smooth, positive-homogeneous metric~\(F\) on the tangent bundle \(TM\), and a smooth positive measure~\(\mathsf{m}\).
When formulating Hardy-type inequalities in this setting, the expression \(|\nabla u (x)|\) is replaced by either \(\max\{F^*(\pm Du)\}\) or \(F^*(Du)\), where \(F^*\) is the dual metric of \(F\), and \(Du\) denotes the \emph{derivative} of \(u\) (see relations~\eqref{eq:prelim:Fdual}~\&~\eqref{eq:prelim:Du}). The quantity \(|x|\) is typically replaced by a \emph{distance function} \(\rho=d(x_0,\cdot)\) from a point \(x_0\in M\).

It turns out that three Finslerian quantities control the behavior of Hardy inequalities:
\begin{itemize}[leftmargin=0.8cm,topsep=0cm]
    \item The \emph{flag curvature}~\(\mathbf{K}\) (see relation~\eqref{eq:prelim:K}), which is the natural Finslerian extension of the sectional  curvature of Riemannian manifolds.
    \item The \emph{\(S\)-curvature}~\(\mathbf{S}\) (see relation~\eqref{eq:prelim:S}), which is a pure Finslerian quantity measuring the rate of change of \emph{distortion} along geodesics. For convenience, one can also use the \emph{reduced \(S\)-curvature} \(\overline{\mathbf{S}}=\mathbf{S}/F\).
    \item The \emph{reversibility}, which is defined by
          \[
              \lambda_F(M)\defeq\sup_{(x,y)\in TM} \mathbf{rev}(x,y), \quad\mbox{where}\quad \mathbf{rev}(x,y)=\frac{F(x,-y)}{F(x,y)}.
          \]
          Clearly \(1\le\lambda_F(M)\le +\infty\). The manifold is said to be \emph{reversible}, if \(\lambda_F(M)=1\). In particular, every Riemannian metric is reversible.
\end{itemize}

The Finslerian version of the Hardy inequality~\eqref{eq:intro:Hardy}, is due to Huang, Kristály, and Zhao~\cite[Theorem 1.4]{huang2020sharp}, where the authors showed that if \(\mathbf{K}\le 0\) and \(\mathbf{S}\le0\), then one has
\begin{equation}\label{eq:intro:Hardy:Finsler}
    \int_M \max\{F^*(\pm Du)\}^2\diff \mathsf{m}\ge \left(\frac{n-2}{2}\right)^2\int_M\frac{u^2}{\rho^2}\diff \mathsf{m},\quad \forall  u\in C_0^\infty(M);
\end{equation}
in addition, if \(F\) is reversible, then \( \left(\frac{n-2}{2}\right)^2\) is sharp. 
Since \(\lambda_F(M)F^*(Du)\ge \max\{F^*(\pm Du)\}\), 
we also have
\begin{equation}\label{eq:intro:Hardy:Finsler:lam}
    \lambda_F(M)^2\int_M F^*(Du)^2\diff \mathsf{m}\ge \left(\frac{n-2}{2}\right)^2\int_M\frac{u^2}{\rho^2}\diff \mathsf{m},\quad \forall  u\in C_0^\infty(M).
\end{equation}

Note that if \(\lambda_F(M)=1\), then inequalities~\eqref{eq:intro:Hardy:Finsler}~\&~\eqref{eq:intro:Hardy:Finsler:lam} coincide. If \(\lambda_F(M)=+\infty\), then left hand side of the latter inequality blows up. In this case, the authors in~\cite[Example 6.2]{huang2020sharp} showed that concerning the \emph{Funk metric} with \(\mathbf{K}=-\frac{1}{2}\), \(\overline{\mathbf{S}}=\frac{n+1}{2}\), and \(\lambda_F=+\infty\) one has
\[
    \inf_{u\in C_0^\infty(M)}\frac{\int_M {F^*}^2(Du)\diff \mathsf{m}}{\int_M\frac{u^2}{\rho^2}\diff \mathsf{m}}=0,
\]
which aligns (in some sense) with inequality~\eqref{eq:intro:Hardy:Finsler:lam}.
We also refer to Kristály, Li, and, Zhao~\cite[Theorem 1.1]{kristaly2024influence} for additional results describing the failure of various functional inequalities.

In the remaining case when \(1<\lambda_F(M)<\infty\), to the best of our knowledge, there is no evidence on the sharpness of~\eqref{eq:intro:Hardy:Finsler}~\&~\eqref{eq:intro:Hardy:Finsler:lam}. In fact, we are not aware of any particular examples where the sharpness persists. Our first main result is an affirmative example, based on the construction of suitable spaces presented below. Since the sharpness of~\eqref{eq:intro:Hardy:Finsler} follows from the sharpness of \eqref{eq:intro:Hardy:Finsler:lam}, we only discuss the latter. 

Consider the triple \((\mathbb{R}^n,F,\mathsf{m})\), where \(F\colon \mathbb{R}^n\times \mathbb{R}^n\to [0,\infty)\) is defined by
\[
    F(x,y)=|y|-\frac{\langle x,y\rangle\theta}{|x|},\quad\text{if } x\ne O\quad\text{and}\quad F(O,y)=|y|,
\]
for some \(\theta\in(0,1)\) and \(\mathsf{m}\) stands for the Busemann--Hausdorff measure induced by \(F\) (see relation~\eqref{eq:prelim:BH}). Here \(O\in \mathbb{R}^n\) denotes the origin, \(|\cdot|\) and \(\langle\cdot,\cdot\rangle\) are the norm and the inner product in \(\mathbb{R}^n\), respectively. Note that \(F\) is \emph{not} a Finsler metric, since it is not continuous at the origin, but it has some remarkable properties, which could facilitate the proof of the sharpness of~\eqref{eq:intro:Hardy:Finsler:lam}: 
The geodesics are straight lines. The reversibility is \(\lambda_F(\mathbb{R}^n)=\frac{1+\theta}{1-\theta}=\lambda\in(1,\infty)\). If \(\rho=d(O,\cdot)\), then \(\mathbf{K}=0\), \(\overline{\mathbf{S}}=0\), and \(\mathbf{rev}=\lambda\) along \(\nabla\rho\), i.e.\ along the geodesics starting from~\(O\). Moreover, if \(u=v(\rho)\) with \(v'<0\), then one has \(\lambda F^*(Du)=\max\{F^*(\pm Du)\}\).

To remedy the discontinuity at the \(O\), we construct a family \(\mathcal{F}_{0,0,\lambda}=\{(\mathbb{R}^n,F_\varepsilon,\mathsf{m}_\varepsilon)\}_{\varepsilon>0}\) of FMMMs, where
\[
        F_\varepsilon(x,y)=|y|-\frac{\langle x,y\rangle(|x|+2\varepsilon)\theta}{(|x|+\varepsilon)^2},
\]
and \(\mathsf{m}_\varepsilon\) is the Busemann--Hausdorff measure induced by \(F_\varepsilon\). Note that \(F_\varepsilon\to F\) and \(\mathsf{m}_\varepsilon\to \mathsf{m}\) (in densities)  as \(\varepsilon\to 0\). Moreover, for every \(\varepsilon>0\), one has \(\mathbf{K}_\varepsilon\le0\), \(\mathbf{\overline S}_\varepsilon\le0\), and \(\mathbf{rev}_\varepsilon\le\lambda\)  along \(\nabla\rho_\varepsilon\) (see Theorem~\ref{thm:0}).

Using this family, our first result can be stated as follows; for a more general version, including weights and a general order \(p>1\), we refer to  Theorem~\ref{thm:0:app:sharp}.
\begin{theorem}\label{thm:intro:Hardy:sharp}
    Let \(n\ge 3\), \(\lambda\in (1,\infty)\). For every \((\mathbb{R}^n,F_\varepsilon,\mathsf{m}_\varepsilon)\in \mathcal{F}_{0,0,\lambda}\) and \(u\in C_0^\infty(\mathbb{R}^n)\) one has
    \begin{equation}\label{eq:intro:Hardy:sharp}
        \lambda^2\displaystyle\int_{\mathbb{R}^n} F_\varepsilon^*(Du)^2\diff \mathsf{m}_\varepsilon\ge c_n\int_{\mathbb{R}^n}\frac{u^2}{\rho_\varepsilon^2}\diff \mathsf{m}_\varepsilon,\quad\text{where}\quad c_n=\left(\frac{n-2}{2}\right)^2.
    \end{equation}
    Moreover, \(c_n\) is \emph{sharp} in \(\mathcal{F}_{0,0,\lambda}\), in the sense that it is the greatest constant with this property.
\end{theorem}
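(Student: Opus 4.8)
The statement has two halves: the validity of \eqref{eq:intro:Hardy:sharp}, and the optimality of $c_n$ within $\mathcal{F}_{0,0,\lambda}$. \emph{The inequality} is a direct consequence of the general Finslerian Hardy inequality \eqref{eq:intro:Hardy:Finsler:lam}. By Theorem~\ref{thm:0} each $(\mathbb{R}^n,F_\varepsilon,\mathsf{m}_\varepsilon)$ is a genuine FMMM with $\mathbf{K}_\varepsilon\le 0$ and $\mathbf{S}_\varepsilon\le 0$, so \eqref{eq:intro:Hardy:Finsler:lam} applies, and it remains only to check $\lambda_{F_\varepsilon}(\mathbb{R}^n)\le\lambda$. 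Writing $\varphi$ for the angle between $x$ and $y$, one computes for $x\ne O$ that $\mathbf{rev}_\varepsilon(x,y)=\frac{1+c_\varepsilon(|x|)\cos\varphi}{1-c_\varepsilon(|x|)\cos\varphi}$, where $c_\varepsilon(r)\defeq\frac{r(r+2\varepsilon)\theta}{(r+\varepsilon)^2}=\theta\bigl(1-\frac{\varepsilon^2}{(r+\varepsilon)^2}\bigr)\in(0,\theta)$; maximizing over $\varphi$ and $r$ gives $\lambda_{F_\varepsilon}(\mathbb{R}^n)=\frac{1+\theta}{1-\theta}=\lambda$, whence \eqref{eq:intro:Hardy:sharp}.

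\emph{Sharpness: reduction.} I will prove the stronger statement that $\inf_{u\in C_0^\infty(\mathbb{R}^n)}\mathcal{R}_\varepsilon(u)=c_n$ for each individual $\varepsilon>0$, where $\mathcal{R}_\varepsilon(u)\defeq\lambda^2\int_{\mathbb{R}^n}F_\varepsilon^*(Du)^2\diff\mathsf{m}_\varepsilon\big/\int_{\mathbb{R}^n}u^2\rho_\varepsilon^{-2}\diff\mathsf{m}_\varepsilon$. The model is the singular limit $(\mathbb{R}^n,F,\mathsf{m})$: here $F=\alpha+\beta$ is of Randers type with $\alpha=|\cdot|$ and $|\beta|_\alpha\equiv\theta$ constant, so $\mathsf{m}$ is a constant multiple of Lebesgue measure; the $F$-geodesics from $O$ are rays, $\rho=(1-\theta)|\cdot|$, $F^*(D\rho)=1$, and for non-increasing $u=v(\rho)$ the relation recorded before Theorem~\ref{thm:intro:Hardy:sharp} gives $\lambda F^*(Du)=\max\{F^*(\pm Du)\}=F^*(-Du)=|v'(\rho)|$. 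Substituting $\tau=(1-\theta)|x|$ and integrating in polar coordinates, the corresponding quotient on the limit object equals \emph{exactly} the classical one-dimensional radial Hardy quotient $\mathcal{Q}(v)=\int_0^\infty v'(\tau)^2\tau^{n-1}\diff\tau\big/\int_0^\infty v(\tau)^2\tau^{n-3}\diff\tau$, whose infimum over non-increasing, eventually vanishing $v$ is $c_n$ (via the substitution $v(\tau)=\tau^{-(n-2)/2}\psi(\log\tau)$, which turns $\mathcal{Q}$ into $c_n+\int\dot\psi^2\big/\int\psi^2$).

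\emph{Sharpness: test functions.} For $0<\tau_0<\tau_1$ let $v_{\tau_0,\tau_1}$ equal $\tau_0^{-(n-2)/2}$ on $[0,\tau_0]$, equal $\tau^{-(n-2)/2}$ on $[\tau_0,\tau_1]$, and be a smooth non-increasing cutoff from $\tau_1^{-(n-2)/2}$ to $0$ on $[\tau_1,2\tau_1]$, with corners rounded. This $v_{\tau_0,\tau_1}$ is non-increasing, satisfies $\mathcal{Q}(v_{\tau_0,\tau_1})=c_n+O\bigl(1/\log(\tau_1/\tau_0)\bigr)$, and on the inner region $\{\tau\le\tau_0\}$ one has $v_{\tau_0,\tau_1}'=0$, so that region contributes $0$ to the numerator and only $O(1)$ to the denominator of $\mathcal{Q}$. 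Choosing $\tau_0=\tau_0(k)\to\infty$ with $\tau_1(k)/\tau_0(k)\to\infty$ (e.g.\ $\tau_0=k$, $\tau_1=k^2$) and writing $v_k=v_{\tau_0(k),\tau_1(k)}$, we get $\mathcal{Q}(v_k)\to c_n$, while the support of $v_k$, apart from the negligible inner ball, recedes to infinity. The flatness of $v_k$ near $0$ makes $u_k^\varepsilon\defeq v_k(\rho_\varepsilon)$ smooth across $O$, so $u_k^\varepsilon\in C_0^\infty(\mathbb{R}^n)$; and where $v_k'=0$ we have $Du_k^\varepsilon=0$, so the almost-reversible behaviour of $F_\varepsilon$ near $O$ is harmless.

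\emph{Sharpness: transfer, and the obstacle.} Fix $\varepsilon>0$. By Theorem~\ref{thm:0}, $\rho_\varepsilon=g_\varepsilon(|\cdot|)$ with $g_\varepsilon'=1-c_\varepsilon$, and on $\{|x|\ge R\}$ one has, uniformly as $R\to\infty$: $F_\varepsilon\to F$ on the unit cosphere bundle, the density of $\mathsf{m}_\varepsilon$ tends to that of $\mathsf{m}$, $\rho_\varepsilon/((1-\theta)|x|)\to1$, $F_\varepsilon^*(D\rho_\varepsilon)=1$, and — since $\mathbf{rev}_\varepsilon$ along $\nabla\rho_\varepsilon$ equals $\frac{1+c_\varepsilon(|x|)}{1-c_\varepsilon(|x|)}\to\lambda$ — also $\lambda F_\varepsilon^*(-D\rho_\varepsilon)\to1$, whence $\lambda^2 F_\varepsilon^*(Du_k^\varepsilon)^2=(1+o(1))\,v_k'(\rho_\varepsilon)^2$ on the support of $u_k^\varepsilon$. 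Changing variables $\tau=\rho_\varepsilon(x)$ in both integrals defining $\mathcal{R}_\varepsilon(u_k^\varepsilon)$ and using $g_\varepsilon'\to1-\theta$ together with the convergence of the density of $\mathsf{m}_\varepsilon$, we obtain $\mathcal{R}_\varepsilon(u_k^\varepsilon)=\mathcal{Q}(v_k)+o(1)\to c_n$, hence $\inf_{u\in C_0^\infty(\mathbb{R}^n)}\mathcal{R}_\varepsilon(u)\le c_n$; combined with the first part this proves the sharpness of $c_n$ in $\mathcal{F}_{0,0,\lambda}$. The main work lies precisely in this transfer: making the stated convergences uniform over the \emph{moving} supports (which recede to infinity with $k$), controlling $F_\varepsilon^*$, $\rho_\varepsilon$ and $\mathsf{m}_\varepsilon$ on radial covectors sharply enough — in particular establishing $\lambda F_\varepsilon^*(-D\rho_\varepsilon)=1+o(1)$ there — so that the fixed factor $\lambda^2$ is asymptotically exactly absorbed, and verifying that the corner-rounding and the inner ball cost only $o(1)$ in $\mathcal{R}_\varepsilon(u_k^\varepsilon)$.
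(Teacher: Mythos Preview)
There is a genuine gap in your derivation of the inequality \eqref{eq:intro:Hardy:sharp}. You invoke \eqref{eq:intro:Hardy:Finsler:lam}, whose hypotheses are the \emph{global} bounds $\mathbf{K}\le 0$ and $\mathbf{S}\le 0$, and assert these hold by Theorem~\ref{thm:0}. But Theorem~\ref{thm:0}(iii) only gives $\mathbf{K}_\varepsilon(x,x)\le 0$ and $\overline{\mathbf{S}}_\varepsilon(x,x)\le 0$, i.e.\ the bounds \emph{along $\nabla\rho_\varepsilon$}; the paper explicitly records (Remark~\ref{rem:0}\ref{rem:0:c}, and the paragraph after Theorem~\ref{thm:intro:Hardy:sharp}) that $\mathbf{K}_\varepsilon(x,-x)>0$, so the global hypothesis of \eqref{eq:intro:Hardy:Finsler:lam} fails on every member of $\mathcal{F}_{0,0,\lambda}$. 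The inequality is still true, but you must appeal to Theorem~\ref{thm:Hardy:genproof}, which is proved via the Riccati-pair machinery precisely to relax the curvature assumptions to hold only along $\nabla\rho$.

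Your sharpness argument, on the other hand, is correct and in fact yields more than the paper states: you obtain $\inf_u \mathcal{R}_\varepsilon(u)=c_n$ for \emph{each} fixed $\varepsilon$, whereas the paper's Theorem~\ref{thm:0:sharp} is formulated as sharpness across the family, with $\varepsilon=\varepsilon(\delta)\to 0$. The two routes share the same skeleton --- radial test functions built from the limit profile $v(t)=t^{-(n-2)/2}$, together with $\lambda F_\varepsilon^*(-D\rho_\varepsilon)\to 1$ --- but differ in the inner truncation and in which limit is taken. The paper cuts $v$ to a ``tent'' supported on an annulus $[t_1,t_4]$ (linear ramps at both ends, zero outside) and uses $h_\varepsilon\to 1/\lambda$ as $\varepsilon\to 0$; you instead extend $v$ by a constant on $[0,\tau_0]$, which kills $Du_k^\varepsilon$ near $O$ and so sidesteps both the singularity of $\rho_\varepsilon$ and the near-reversibility of $F_\varepsilon$ there, and then exploit that for fixed $\varepsilon$ one already has $h_\varepsilon(r)\to 1/\lambda$ and $\psi_\varepsilon(t)/t^{n-1}\to\mathrm{const}$ as $r,t\to\infty$. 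Your version is more direct and gives the stronger per-$\varepsilon$ conclusion; the paper's version is embedded in the reusable criterion Theorem~\ref{thm:0:sharp}, which also handles situations (e.g.\ $t_2(\delta)\to 0$) where letting $\varepsilon\to 0$ is genuinely necessary.
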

We highlight that \(\mathbf{K}_\varepsilon\le 0\) is \emph{not} a global property on \((\mathbb{R}^n,F_\varepsilon,\mathsf{m}_\varepsilon)\in \mathcal{F}_{0,0,\lambda}\), i.e., for each point \(x\ne O\), there exists a direction \(y\) (different from \(\nabla\rho(x)\)), such that \(\mathbf{K}_\varepsilon(x,y)>0\). Fortunately, this phenomena does not affect our proofs. In fact, inequality~\eqref{eq:intro:Hardy:sharp} holds for arbitrary manifolds with \(\lambda_F=\lambda\), and \(\mathbf{K}\le 0\), \(\overline{\mathbf{S}}\le0\) along \(\nabla\rho\) (see Theorem~\ref{thm:Hardy:genproof}), thus, the well-known global assumptions may be relaxed.

In the sequel, assuming \emph{strong negative curvature} (\(\mathbf{K}\le-\kappa^2\), \(\kappa>0\) along \(\nabla\rho\)), we provide a similar construction as above. In this case, we focus on the celebrated spectral gap estimate of McKean~\cite{mckean1970upper}: if a Riemannian manifold has strong negative sectional curvature, then there exist a positive, domain independent lower bound for the first eigenvalue of the Laplacian (which is the fundamental tone of the fixed membrane). The Finslerian version of this result is due to Yin and He (see~\cite[Theorem 3.5]{yin2014first}), which can be stated as follows (for \(p=2\)): if \(\mathbf{K}\le -\kappa^2\) and \(\sup_{TM} |\overline{\mathbf{S}}|<(n-1)\kappa\) for some \(\kappa>0\), and \(\lambda_F(M)<\infty\), then
\begin{equation}\label{eq:intro:yinhe}
    \inf_{u\in W^{1,2}_0(M)}\frac{\int_M {F^*}(Du)^2\diff \mathsf{m}}{\int_M|u|^2\diff \mathsf{m}}\ge \left(\frac{(n-1)\kappa-\sup_{TM} |\overline{\mathbf{S}}|}{2\lambda}\right)^2.
\end{equation}

To prove a sharpness result concerning inequality~\eqref{eq:intro:yinhe}, by perturbing the \emph{Klein metric} on the Euclidean unit ball~\(\mathbb{B}^n\), for arbitrary \(\kappa\in(0,\infty)\), \(h\in \mathbb{R}\), and \(\lambda\in(0,\infty)\), we construct a family \(\mathcal{F}_{\kappa,h,\lambda}=\{(\mathbb{B}^n,F_\varepsilon,\mathsf{m}_\varepsilon)\}_{\varepsilon>0}\) satisfying \(\mathbf{K}_\varepsilon\le-\kappa^2\), \(\overline{\mathbf{S}}_\varepsilon\le (n-1)h\), and \(\mathbf{rev}_\varepsilon\le \lambda\) along \(\nabla\rho_\varepsilon\) (see Theorem~\ref{thm:k}). This construction leads to our second main result, which can be stated as follows; for a more general version concerning the first eigenvalue of the \(p\)-Laplacian (with \(p>1\)), we refer to Theorem~\ref{thm:k:app:sharp}.
\begin{theorem}\label{thm:intro:McKean:sharp}
    Let \(n\ge 2\), \(\lambda\in (1,\infty)\), and \(\kappa>h\ge0\). For every \((\mathbb{B}^n,F_\varepsilon,\mathsf{m}_\varepsilon)\in \mathcal{F}_{\kappa,h,\lambda}\) and \(u\in C_0^\infty(\mathbb{B}^n)\) one has
    \begin{equation}\label{eq:intro:McKean:sharp}
        \lambda^2\displaystyle\int_{\mathbb{B}^n} F_\varepsilon^*(Du)^2\diff \mathsf{m}_\varepsilon\ge c_{n,\kappa,h}\int_{\mathbb{B}^n} u^2 \diff \mathsf{m}_\varepsilon,\quad\text{where}\quad c_{n,\kappa,h}=\left(\frac{(n-1)(\kappa-h)}{2}\right)^2.
    \end{equation}
    Moreover, \(c_{n,\kappa,h}\) is \emph{sharp} in \(\mathcal{F}_{\kappa,h,\lambda}\), in the sense that it is the greatest constant with this property.
\end{theorem}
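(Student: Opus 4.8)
The plan is to split the argument into two parts: the validity of inequality~\eqref{eq:intro:McKean:sharp} for every member of the family, and the sharpness of the constant $c_{n,\kappa,h}$ within $\mathcal{F}_{\kappa,h,\lambda}$.

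For the validity part, I would invoke the properties of the family $\mathcal{F}_{\kappa,h,\lambda}$ recorded in Theorem~\ref{thm:k}, namely that for each $\varepsilon>0$ one has $\mathbf{K}_\varepsilon\le-\kappa^2$, $\overline{\mathbf{S}}_\varepsilon\le(n-1)h$, and $\mathbf{rev}_\varepsilon\le\lambda$ along $\nabla\rho_\varepsilon$. The first step is to reduce $F_\varepsilon^*(Du)$ to a one-dimensional radial estimate: using $\lambda F_\varepsilon^*(Du)\ge\max\{F_\varepsilon^*(\pm Du)\}\ge F_\varepsilon^*(Du)$ and the co-area / polar decomposition associated with $\rho_\varepsilon$, it suffices to prove the McKean-type estimate with the curvature and $S$-curvature bounds entering only through the radial direction. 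This is precisely the content of the general statement (analogous to Theorem~\ref{thm:Hardy:genproof} in the Hardy case) that inequality~\eqref{eq:intro:yinhe} holds whenever the bounds $\mathbf{K}\le-\kappa^2$ and $\overline{\mathbf{S}}\le(n-1)h$ hold merely along $\nabla\rho$; I would either cite such a theorem or reprove it via a Riccati-pair argument. Concretely, one chooses a radial comparison function $w=w(\rho_\varepsilon)$ solving the relevant Riccati equation forced by the curvature bound (an exponential-type profile $e^{-\kappa\rho}$ modified by the $S$-curvature shift $h$), writes the Laplacian comparison $\Delta_\varepsilon\rho_\varepsilon\ge(n-1)(\kappa-h)$ coming from $\mathbf{K}_\varepsilon\le-\kappa^2$ and $\overline{\mathbf{S}}_\varepsilon\le(n-1)h$, and then performs the standard divergence-trick computation: expand $0\le\int F_\varepsilon^*(D(u/\varphi)\cdot\varphi)^2$-type expression, integrate by parts, and collect the sign-definite terms. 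The factor $\lambda^2$ on the left absorbs the irreversibility loss $\lambda F_\varepsilon^*(Du)\ge\max\{F_\varepsilon^*(\pm Du)\}$, and $c_{n,\kappa,h}=\left(\frac{(n-1)(\kappa-h)}{2}\right)^2$ emerges as the square of half the Laplacian lower bound.

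For the sharpness part, the point is that the inequality is asymptotically saturated along the family as $\varepsilon\to0$. I would recall that $F_\varepsilon\to F$ and $\mathsf{m}_\varepsilon\to\mathsf{m}$ in densities, where $(\mathbb{B}^n,F,\mathsf{m})$ is the perturbed Klein model for which the geodesics through the reference point are straight lines, $\mathbf{rev}=\lambda$ exactly along $\nabla\rho$, and $\mathbf{K}=-\kappa^2$, $\overline{\mathbf{S}}=(n-1)h$ along $\nabla\rho$; crucially, for radial $u=v(\rho)$ with $v'<0$ one has the identity $\lambda F^*(Du)=\max\{F^*(\pm Du)\}$ (the analogue of the displayed equality in the excerpt's construction). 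On this limiting space the Riccati-pair extremality forces the ratio to equal $c_{n,\kappa,h}$ for an explicit radial profile. So the plan is: fix $\delta>0$, pick a radial, compactly supported $u_\delta=v_\delta(\rho)$ (a truncated and mollified version of the formal extremal $v=e^{-\frac{(n-1)(\kappa-h)}{2}\rho}$ — which itself is not in $C_0^\infty$) such that, on $(\mathbb{B}^n,F,\mathsf{m})$, the quotient $\lambda^2\int F^*(Du_\delta)^2\diff\mathsf{m}\big/\int u_\delta^2\diff\mathsf{m}<c_{n,\kappa,h}+\delta$; then, keeping $u_\delta$ fixed, use the density convergence $F_\varepsilon\to F$, $\mathsf{m}_\varepsilon\to\mathsf{m}$ together with the radial-equality identity to pass to the limit $\varepsilon\to0$ in the numerator and denominator, obtaining for $\varepsilon$ small that $\lambda^2\int F_\varepsilon^*(Du_\delta)^2\diff\mathsf{m}_\varepsilon\big/\int u_\delta^2\diff\mathsf{m}_\varepsilon<c_{n,\kappa,h}+2\delta$. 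Since $\delta>0$ is arbitrary, no constant larger than $c_{n,\kappa,h}$ can work uniformly over $\mathcal{F}_{\kappa,h,\lambda}$.

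The main obstacle I anticipate is the sharpness direction, specifically two intertwined technical points: first, the formal extremizer $e^{-\frac{(n-1)(\kappa-h)}{2}\rho}$ lives on an unbounded-geodesic-radius model and must be truncated without destroying near-optimality, which requires careful control of the boundary layer of the cutoff (the usual trick: cut at radius $R$ with a logarithmic-in-$R$ or linear transition layer and let $R\to\infty$, checking the error terms vanish); second, one must verify that along $\nabla\rho_\varepsilon$ the quantities genuinely converge to their counterparts along $\nabla\rho$ — i.e.\ that the perturbation really is concentrated away from the radial direction asymptotically — so that the radial identity $\lambda F^*(Du)=\max\{F^*(\pm Du)\}$ is recovered in the limit and the $\lambda^2$ factor is not wasteful. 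The validity direction, by contrast, should be essentially a transcription of the Yin--He/Riccati-pair machinery under the relaxed (directional) curvature hypotheses, and I expect it to go through routinely once the comparison function is correctly normalized.
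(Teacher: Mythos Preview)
Your validity argument is essentially the paper's: the Laplace comparison $\Delta_{F_\varepsilon}\rho_\varepsilon\ge(n-1)\kappa\coth(\kappa\rho_\varepsilon)-(n-1)h\ge(n-1)(\kappa-h)$ is fed into the Riccati-pair machinery (Theorem~\ref{thm:RP} with constant $w,L,G,W$; this is precisely Theorem~\ref{thm:McKean:genproof}). For sharpness, however, you take a genuinely different route. You propose an \emph{iterated} limit---first make the quotient $<c_{n,\kappa,h}+\delta$ on the limiting object $(\mathbb{B}^n,F,\mathsf{m})$ by truncating $e^{-(n-1)(\kappa-h)\rho/2}$, then freeze this $u_\delta$ and let $\varepsilon\to0$ via dominated convergence---whereas the paper runs a single \emph{coupled} limit (Theorems~\ref{thm:k:sharp}--\ref{thm:k:app:sharp}): the test function $v_T(\rho_\varepsilon)$ is $\rho_\varepsilon$-radial, its support $[\delta,4\delta]$ moves to infinity, and $\varepsilon=1/\delta\to0$ simultaneously, with the argument resting on explicit uniform-in-$\varepsilon$ two-sided bounds $c\sinh(k_\varepsilon t)^{n-1}\le\psi_\varepsilon(t)\le C\sinh(k_\varepsilon t)^{n-1}$ for the radial Jacobian. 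Your approach trades these hard estimates for a soft convergence step, at the price of having to establish separately the sharpness of the one-dimensional weighted problem $\inf_v\int|v'|^2\mu\,/\int|v|^2\mu=c_{n,\kappa,h}$ with $\mu(t)=\sinh(\kappa t)^{n-1}e^{-(n-1)ht}$, and of working on the non-Finsler limit space that the paper's construction was designed to avoid.

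There is one small but genuine gap: you mention only a high cutoff at radius $R$, but $u_\delta(x)=v_\delta(\rho(x))$ with $v_\delta(0)\ne0$ is not smooth at the origin, since the limiting distance $\rho(x)=\kappa^{-1}\operatorname{arctanh}|x|$ is only Lipschitz there; hence $u_\delta\notin C_0^\infty(\mathbb{B}^n)$ and is not an admissible test function on any $(\mathbb{B}^n,F_\varepsilon,\mathsf{m}_\varepsilon)$. You also need a low cutoff at some $t_1>0$ (or to flatten $v_\delta$ near $0$). Once $\operatorname{supp} v_\delta\subset[t_1,t_4]$ with $t_1>0$, your dominated-convergence step is clean, because $F_\varepsilon^*\to F^*$ and $\sigma_\varepsilon\to\sigma$ uniformly on compact subsets of $\mathbb{B}^n\setminus\{O\}$.
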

Similarly to the previous case, \(\mathbf{K}\le-\kappa^2\) is not a global property. In addition, inequality~\eqref{eq:intro:McKean:sharp} holds for arbitrary manifolds with~\(\lambda_F=\lambda\), and \(\mathbf{K}\le -\kappa^2\), \(\overline{\mathbf{S}}\le (n-1)h\) along \(\nabla\rho\) (see Theorem~\ref{thm:McKean:genproof}). 

In order to establish inequalities~\eqref{eq:intro:Hardy:sharp}~\&~\eqref{eq:intro:McKean:sharp} on general manifolds, we provide a Finslerian extension of the approach of \emph{Riccati pairs}, developed by Kajántó, Kristály, Peter, and Zhao~\cite{riccatipair2023} in the Riemannian setting, which enables us to prove a general Hardy-type inequality:
\[
   \lambda^p \int_\Omega w(\rho)F(Du)^p\diff \mathsf{m}\ge\int_\Omega w(\rho)\max\{F^*(\pm Du)\}^p\diff \mathsf{m}\ge \int_\Omega w(\rho)W(\rho)|u|^p\diff \mathsf{m},\quad \forall u\in C_0^\infty(\Omega),
\]
by simply solving a corresponding Riccati-type ODI, involving the parameters \(w\), \(W\) and \(p\) (see Theorem~\ref{thm:RP}). We note that the proof of this result inspired the basic idea of the construction of the families \(\mathcal{F}_{0,0,\lambda}\) and \(\mathcal{F}_{\kappa,h,\lambda}\). Having this extension, we provide sufficient condition for the sharpness of general Finslerian Hardy-type inequalities (obtained by Riccati pairs) on families \(\mathcal{F}_{0,0,\lambda}\) and \(\mathcal{F}_{\kappa,h,\lambda}\) (see Theorems~\ref{thm:0:sharp}~\&~\ref{thm:k:sharp}). Conceptually, the proof of the sharpness of a Hardy-type inequality relies on a suitable approximation (on the corresponding family) of the \emph{limit function} of the given inequality, which is strongly related to the solution of the Riccati ODI (see Remark~\ref{rem:RP}/\ref{rem:RP:limfunc}).

The paper is structured as follows: In Section~\ref{sec:prelim} we present relevant definitions and results concerning Finsler geometry. In Section~\ref{sec:RP} we provide a Finslerian extension of Riccati pairs by proving Theorem~\ref{thm:RP}, which implies Theorems~\ref{thm:Hardy:genproof}~\&~\ref{thm:McKean:genproof}.
In Section~\ref{sec:0} we address the issue of sharpness in case of non-positive flag curvature: Theorem~\ref{thm:0} presents the family \(\mathcal{F}_{0,0,\lambda}\), while Theorem~\ref{thm:0:sharp} provides sufficient conditions for the sharpness on them. As applications, we prove Theorems~\ref{thm:intro:Hardy:sharp}~\&~\ref{thm:0:app:sharp}.
In Section~\ref{sec:k} we proceed similarly in case of the strong negative flag curvature: Theorem~\ref{thm:k} presents the family \(\mathcal{F}_{\kappa,h,\lambda}\), while Theorem~\ref{thm:k:sharp} provides sufficient conditions for the sharpness on them. As applications, we prove Theorems~\ref{thm:intro:McKean:sharp}~\&~\ref{thm:k:app:sharp}.

\section{Preliminaries: elements from Finsler geometry}\label{sec:prelim}
In this section, we recall various definitions and results from Finsler geometry that are relevant for our presentations. Here, we also fix our notation.  For further details, we refer to the monographs by Bao, Chern, and Shen~\cite{bao2000introduction} and Shen~\cite{shen2001lectures}.

\subsection{Finsler manifolds}
Let \(M\) be a smooth manifold with dimension \(n\ge 2\). At each point \(x\in M\), the \emph{tangent space} \(T_xM\) consists of tangent vectors (or simply vectors) at \(x\), while the \emph{cotangent space} \(T^*_xM\) consists of cotangent vectors (or covectors) at \(x\), sometimes also referred to as one forms. These spaces are \emph{dual} to each other, the \emph{canonical pairing} between them is a bilinear form defined by
\[
    \langle\xi,y\rangle=\xi(y),\qquad \forall x\in M,\ y\in T_xM,\ \xi\in T^*_xM.
\]

The \emph{tangent bundle} \(TM\) is the collection of all tangent spaces, consisting of \emph{bound vectors}, the \emph{cotangent bundle} \(T^*M\) is the collection of all cotangent spaces, consisting of \emph{bound covectors}. More precisely, we have
\[
    TM=\{(x,y):x\in M, y\in T_xM\}
    \quad\mbox{and}\quad
    T^*M=\{(x,\xi): x\in M, \xi\in T^*_xM\}.
\]

A \emph{vector field} on \(M\) is a map that assigns to each point \(x\in M\) a bound vector \((x,y)\in TM\). Similarly, a \emph{covector field} assigns to each point \(x\in M\) a bound covector \((x,\xi )\in T^*M\).

\begin{remark}\label{rem:prelim:convention}
    For simplicity, we avoid the introduction of additional notation/letters for bound vectors and vector fields. If a statement involves a bound vector \((x,y)\in TM\) or a vector field \(x\mapsto (x,y)\), but the positional coordinate \(x\) is not particularly important, we simply omit \(x\) and formulate the statement using the bound vector \(y\in TM\) or the vector field \(y\), respectively. Similarly, \(\xi\) may denote a covector, a bound covector, or a covector field. The specific underlying objects are always revealed by the context.
\end{remark}

The couple \((M,F)\) is said to be a \emph{Finsler manifold} if the function \(F\colon TM\to[0,\infty)\) is a \emph{Finsler metric} satisfying the following conditions:
\begin{enumerate}[label=\textup{(\roman*)}]
    \item \label{prelim:fdef:i} \emph{Regularity}: \(F(y)\) is smooth on \(TM\setminus\{0\}\), which consists of non-zero bound vectors;
    \item \label{prelim:fdef:ii} \emph{Positive homogeneity}: \(F(\lambda y)=\lambda F(y)\), for every \(y\in TM\) and \(\lambda\ge0\);
    \item \label{prelim:fdef:iii} \emph{Strong convexity}: For every \(y\in TM\setminus\{0\}\), the following quadratic form is positive definite:
          \begin{equation}\label{eq:prelim:g}
              g_{y}(v,w)\defeq\left.\frac{\partial^2}{\partial s\partial t}\left(\frac{1}{2}F(y+sv+tw)^2\right)\right|_{s=t=0},\qquad\forall v,w\in TM.
          \end{equation}
\end{enumerate}

\noindent According to Remark~\ref{rem:prelim:convention}, we use both the notation \(F(y)\) and \(F(x,y)\) depending on the context.

The \emph{reversibility} of a Finsler manifold \((M,F)\),  introduced by Rademacher~\cite{rademacher2004sphere}, is defined by
\[
    \lambda_F(M)\defeq\sup_{x\in M}\sup_{y\in T_xM} \mathbf{rev}(x,y), \quad\mbox{where}\quad \mathbf{rev}(x,y)=\frac{F(x,-y)}{F(x,y)}.
\]
Clearly \(1\le\lambda_F(M)\le +\infty\). The manifold is said to be \emph{reversible}, if \(\lambda_F(M)=1\).

The \emph{dual Finsler metric} of \(F\) is the function \(F^*\colon T^*M\to[0,\infty)\), defined by
\begin{equation}\label{eq:prelim:Fdual}
    F^*(\xi)\defeq \sup_{y\in TM\setminus \{0\}}\frac{\langle\xi,y\rangle}{F(y)},\qquad \forall \xi\in T^*M,
\end{equation}
which is also a Finsler metric on \(M\). The reversibility of \((M,F^*)\) can be defined similarly by
\[
    \lambda_{F^*}(M)\defeq \sup_{x\in M}\sup_{\xi\in T_x^*M} \mathbf{rev}^*(x,y), \quad\mbox{where}\quad \mathbf{rev}^*(x,y)=\frac{F^*(x,-\xi)}{F^*(x,\xi)}.
\]
In particular, according to e.g., Huang, Kristály, and Zhao~\cite[Lemma~2.1]{huang2020sharp}, one has
\begin{equation}\label{eq:lF:lFs}
    \lambda_F(M)=\lambda_{F^*}(M).
\end{equation}

Using a Finsler metric \(F\), one can define the \emph{length} of a Lipschitz continuous curve \(c\colon [a,b]\to M\) by
\[L(c)\defeq\int_a^b F(\dot c(t)) \diff t,\]
where the dot denotes the derivative with respect to \(t\).
Given two points \(x_0,x_1\in M\), the \emph{distance} from \(x_0\) to \(x_1\) is defined by
\(d(x_0,x_1)= \inf_c L(c),\)
where the infimum is taken over all Lipschitz continuous curves \(c\colon [0,1]\to M\) with \(c(0)=x_0\) and \(c(1)=x_1\).
In general, \(d(x_0,x_1)\ne d(x_1,x_0)\), unless \(F\) is reversible.

If \(\rho=d(x_0,\cdot)\) is the distance from a point \(x_0\in M\), then the \emph{eikonal equations} hold \(\!\diff\mathsf{m}\)-a.e.\ in \(M\):
\begin{equation}\label{eq:eikonal}
    F(\nabla \rho)=F^*(D\rho)=1.
\end{equation}

For fixed \(x\in M\), the \emph{Legendre transform} \(\ell^*\colon T^*_xM\to T_xM\) assigns to each covector \(\xi\) the unique vector~\(y\), which minimizes the map
\[
    E^*(y)\defeq \langle \xi, y\rangle - \frac{1}{2}F(y)^2.
\]
The minimizer \(y=\ell^*(\xi)\) can also be interpreted as the unique bound vector  with the properties
\begin{equation}\label{eq:prelim:lt}
    F(\ell^*(\xi))=F^*(\xi)\quad\mbox{and}\quad \langle \xi, \ell^*(\xi)\rangle =F(\ell^*(\xi))F^*(\xi).
\end{equation}
Combining the properties from relation~\eqref{eq:prelim:lt} yields
\begin{equation}\label{eq:prelim:lt:cor}
    \langle \xi,\ell^*(\xi)\rangle=F^*(\xi)^2, \qquad\forall \xi\in T^*_xM.
\end{equation}

Similarly, the Legendre transform \(\ell\colon T_xM\to T^*_xM\) assigns to each vector \(y\) the unique covector \(\xi\) that minimizes the map
\[
    E(\xi)\defeq \langle \xi, y\rangle - \frac{1}{2}F^*(\xi)^2.
\]
For the minimizer \(\xi=\ell(y)\) similar relations hold as in~\eqref{eq:prelim:lt}~\&~\eqref{eq:prelim:lt:cor}. One also has \(\ell^{-1}=\ell^*\), hence \(y=\ell^*(\xi)\). Using the fact that \(\frac{\partial E}{\partial \xi}=0\), we obtain
\begin{equation}\label{eq:prelim:grF}
    \ell^*(\xi)=F^*(\xi)\frac{\partial  F^*(\xi)}{\partial \xi},\qquad
    \forall \xi\in T_x^*M.
\end{equation}
Thus, \(\ell^*\) is positive homogeneous. Note that the definitions and properties of the Legendre transforms \(\ell\) and \(\ell^*\) naturally extend to bound vectors and vector fields. Moreover, using the conventions of Remark~\ref{rem:prelim:convention}, all of the above formulas are formally valid. We only need to make sure that the positional coordinates of \(y\) and \(\xi\) coincide, otherwise the canonical pairing does not make sense.

For every \(x\in M\), there exists a local coordinate system \(\{x^i\}\) defined on a coordinate neighborhood of \(x\). Denote by \(\{\frac{\partial}{\partial x^i}\}\) and \(\{\!\diff x^i\}\) the induced basis on \(T_xM\) and \(T^*_xM\), respectively.

A \emph{Finsler metric measure manifold} \((M,F, \mathsf{m})\) (abbreviated by FMMM) is a Finsler manifold equipped with a smooth positive measure \(\mathsf{m}\). Unlike the Riemannian setting, there is no canonical measure in Finsler geometry. However, one often considers the Busemann--Hausdorff measure induced by \(F\), which is given by
\begin{equation}\label{eq:prelim:BH}
    \diff \mathsf{m}_F\defeq\sigma_F(x)\diff x^1\wedge\ldots\wedge \diff x^n,\quad\mbox{where}\quad  \sigma_F(x)=\frac{\operatorname{vol}(\mathbb{B}^n)}{\operatorname{vol}(B_x(1))},
\end{equation}
\(\operatorname{vol}(\cdot)\) denotes the Euclidean volume, \(\mathbb{B}^n\subset \mathbb{R}^n\) is the Euclidean unit ball, while \[
    B_x(1)=\{y\in T_xM: F(y)<1\}.
\]
The \emph{derivative} of a smooth function \(u\colon M\to \mathbb{R}\) is a covector field defined by
\begin{equation}\label{eq:prelim:Du}
    D u\defeq \frac{\partial u}{\partial x^i}\diff x^i.
\end{equation}
The \emph{gradient} of a smooth function \(u\colon M\to \mathbb{R}\) is a vector field that can be computed as
\begin{equation}\label{eq:prelim:gru}
    \nabla u\defeq\ell^*(D u).
\end{equation}
The \emph{divergence} of a smooth vector field \(v\) with components \((v^i)\) is given by
\[
    \di(v)\defeq \sum_i \frac{1}{\sigma}\frac{\partial \sigma v^i}{\partial x^i}.
\]
If \(p>1\), then the \emph{\(p\)-Laplacian} of a smooth function \(u\) is
\[
    \Delta_{F,p} (u)\defeq \di(F(\nabla u)^{p-2}\nabla u).
\]
We note that for \(p=2\), the \(p\)-Laplacian is precisely the Finsler Laplacian
\[\Delta_F (u) = \di(\nabla u).\]
We also notice that, if \(\rho\) is a distance function, then
\(\Delta_{F,p}(\rho)=\Delta_F(\rho)\) holds \(\!\diff\mathsf{m}\)-a.e.\ in \(M\).

Finally, if \(u_1\) and \(u_2\) are smooth functions, both vanishing at the boundary, then the following \emph{integration by parts formula} holds:
\begin{equation}\label{eq:prelim:ibp}
    \int_M u_1\Delta_F (u_2)\diff\mathsf{m}=-\int_M \langle D u_1,\nabla u_2\rangle\diff\mathsf{m},
\end{equation}
see e.g., Ohta and Sturm~\cite{ohta2009heat}.

\subsection{Curvatures and Laplace comparison} In the sequel, we recall the notions of the flag and \(S\)-curvature of a FMMM \((M,F,\mathsf{m})\), which can be interpreted more easily using tensors. According to the notation of the previous section, the fundamental tensor from relation~\eqref{eq:prelim:g} can be rewritten as
\[
    g_{(x,y)}=g_{ij}(x,y)\diff x^i\diff x^j,\quad\mbox{where}\quad g_{ij}(x,y)\defeq\frac{1}{2}\frac{\partial^2 F(x,y)^2}{\partial y^i\partial y^j},\qquad \forall (x,y)\in TM.
\]
Note that, by convention, repeated indexes are automatically summed. The \emph{geodesic spray coefficients} are defined as follows:
\[
    G^i(x,y)\defeq\frac{1}{4}g^{il}(x,y)\left(2\frac{\partial g_{jl}(x,y)}{\partial x^k}- \frac{\partial g_{jk}(x,y)}{\partial x^l}\right)y^jy^k,\qquad \forall (x,y)\in TM.
\]
Recall that a smooth curve \(t\mapsto \gamma(t)\) is \emph{geodesic} if it satisfies the equation
\[
    \ddot \gamma^i(t)+2G^i(\gamma(t),\dot \gamma(t))=0.
\]

The \emph{Riemannian curvature} is defined by
\[
    R_{(x,y)}\defeq R^i_k(x,y)\frac{\partial}{\partial x^i} \otimes dx^k, \qquad \forall (x,y)\in TM,
\]
where
\[
    R^i_k(x,y)\defeq  2 \frac{\partial G^i(x,y)}{\partial x^k}-y^i\frac{\partial^2 G^i(x,y)}{\partial x^j\partial y^k}+2G^j(x,y)\frac{\partial^2 G^i(x,y)}{\partial y^j\partial y^k}-\frac{\partial  G^i(x,y)}{\partial y^j}\frac{\partial  G^j(x,y)}{\partial y^k}.
\]
The definition of \emph{flag curvature} is as follows:
\begin{equation}\label{eq:prelim:K}
    \mathbf{K}(x,y,v)=\frac{g_{(x,y)}(R_{(x,y)}(v),v)}{g_{(x,y)}(y,y)g_{(x,y)}(v,v)-g_{(x,y)}(y,v)^2},\qquad \forall x\in M,\ y,v\in T_xM.
\end{equation}
The name ``flag'' refers to the plane spanned by the vectors \(y\) and \(v\); the vector \(y\) is sometimes referred to as the \emph{flagpole}.

The \emph{distortion} is defined by
\[
    \tau(x,y)\defeq\log\frac{\sqrt{\det g_{ij}(x,y)}}{\sigma(x)},\qquad \forall (x,y)\in TM.
\]
The \emph{\(S\)-curvature} measures the rate of change of the distortion along geodesics, more precisely
\begin{equation}\label{eq:prelim:S}
    \mathbf{S}(x,y)=\left.\frac{\diff}{\diff t}\tau(\gamma(t),\dot\gamma(t))\right|_{t=0},\qquad \forall (x,y)\in TM,
\end{equation}
where \(\gamma\) denotes the unique geodesic with \(\gamma(0)=x\) and \(\dot\gamma(0)=y\). For computational convenience (due to the homogeneity of degree zero), we prefer to use the \emph{reduced \(S\)-curvature} defined by
\[
    \overline{\mathbf{S}}(x,y)=\frac{\mathbf{S}(x,y)}{F(x,y)},\quad \forall (x,y)\in TM,\  y\ne 0.    
\]
For every \(\kappa\ge 0\), define \(\mathbf{ct}_\kappa\colon (0,\infty)\to \mathbb{R}\) by
\[{\mathbf{ct}}_\kappa(t)=
    \begin{cases}
        \frac{1}{t},           & \mbox{if } \kappa=0, \\
        \kappa\coth(\kappa t), & \mbox{if } \kappa>0, \\
    \end{cases}.
\]


Our forthcoming arguments rely deeply on the following Laplace comparison theorem, which is a slight modification of the celebrated result by Wu and Xin~\cite[Theorem 5.1]{wu2007comparison}. Here we only assume curvature conditions alongside the geodesics starting from some \(x_0\in M\), which does not affect the original proof.
\begin{theorem}\label{thm:comp}
    Let \((M,F)\) be a Finsler manifold and \(\rho=d(x_0,\cdot)\) be the distance from a point \(x_0\in M\). Suppose that \(\mathbf{K}\le -\kappa^2\) and \(\overline{\mathbf{S}}\le (n-1)h\) along \(\nabla\rho\), for some \(\kappa\ge 0\) and \(h\in \mathbb{R}\). Then one has
    \[
        \Delta_F (\rho)\ge (n-1)\mathbf{ct}_\kappa(\rho)-(n-1)h.
    \]
    Moreover, equality holds if and only if \(\mathbf{K}= -\kappa^2\) and \(\overline{\mathbf{S}}= (n-1)h\), along \(\nabla\rho\).
\end{theorem}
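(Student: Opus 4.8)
This is essentially the argument of Wu and Xin~\cite{wu2007comparison}; the only new point is to observe that their Jacobi-field (equivalently Riccati) comparison along the minimal geodesics issuing from $x_0$ uses the curvature hypotheses only in the directions tangent to those geodesics, i.e.\ only the assumed bounds along $\nabla\rho$. Since $\rho$ is smooth off the closed $\mathsf m$-null set $\{x_0\}\cup\mathrm{Cut}(x_0)$, it suffices to establish the pointwise bound at an arbitrary $x$ in the complement; fix such an $x$, set $\ell=\rho(x)$, and let $\gamma\colon[0,\ell]\to M$ be the unit-speed minimal geodesic from $x_0$ to $x$, so $\nabla\rho(\gamma(t))=\dot\gamma(t)$ on $(0,\ell]$ and no point of $\gamma|_{(0,\ell]}$ is conjugate to $x_0$.

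First I would split off the measure: along $\gamma$ one has the standard decomposition
\[
\Delta_F\rho(x)=\operatorname{tr}_{g_{\dot\gamma(\ell)}}\!\big(\he\rho(x)\big)-\overline{\mathbf S}(\dot\gamma(\ell)),
\]
where $\he\rho(x)$ is the $g_{\dot\gamma}$-symmetric shape operator of the distance sphere through $x$ acting on $\dot\gamma(\ell)^\perp$, and the extra term is precisely the reduced $S$-curvature, since $\mathsf m$ need not be the Riemannian volume of $g_{\nabla\rho}$ (see~\cite{wu2007comparison,ohta2009heat,shen2001lectures}). As $\overline{\mathbf S}(\dot\gamma(\ell))\le(n-1)h$ by hypothesis, it remains to bound the trace term from below by $(n-1)\mathbf{ct}_\kappa(\ell)$.

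The core is a radial Jacobi comparison. Along $\gamma$ work with the osculating Riemannian structure $g_{\dot\gamma(t)}$ and the curvature operator $R_{\dot\gamma(t)}$ of~\eqref{eq:prelim:K}, for which $g_{\dot\gamma}(R_{\dot\gamma}v,v)=\mathbf K(\dot\gamma,v)\le-\kappa^2$ for every $v\perp\dot\gamma$, by the assumption along $\nabla\rho$. Choose a $g_{\dot\gamma(\ell)}$-orthonormal basis $v_1,\dots,v_{n-1}$ of $\dot\gamma(\ell)^\perp$; since $\gamma(\ell)$ is not conjugate to $x_0$, for each $i$ there is a unique Jacobi field $J_i$ along $\gamma$ with $J_i(0)=0$ and $J_i(\ell)=v_i$, and it is normal to $\dot\gamma$, nowhere zero on $(0,\ell]$, with $\nabla_{\dot\gamma}J_i(\ell)=\he\rho(x)(v_i)$. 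Writing $h_i=\sqrt{g_{\dot\gamma}(J_i,J_i)}$, the Jacobi equation together with the Cauchy--Schwarz inequality gives $h_i''\ge-\mathbf K(\dot\gamma,J_i)\,h_i\ge\kappa^2 h_i$ on $(0,\ell)$ with $h_i(0)=0$; comparing with $\mathrm{sn}_\kappa$, the solution of $\mathrm{sn}_\kappa''=\kappa^2\mathrm{sn}_\kappa$, $\mathrm{sn}_\kappa(0)=0$, $\mathrm{sn}_\kappa'(0)=1$, via Sturm's theorem yields $(h_i'/h_i)(\ell)\ge(\mathrm{sn}_\kappa'/\mathrm{sn}_\kappa)(\ell)=\mathbf{ct}_\kappa(\ell)$. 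Since $(h_i'/h_i)(\ell)=g_{\dot\gamma(\ell)}(\he\rho(x)(v_i),v_i)$, summing over $i$ gives $\operatorname{tr}(\he\rho(x))\ge(n-1)\mathbf{ct}_\kappa(\ell)$, and together with the displayed decomposition this is the claimed inequality.

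For the rigidity statement, equality propagates backwards: it forces $\overline{\mathbf S}(\nabla\rho)=(n-1)h$ and, at $\mathsf m$-a.e.\ $x$ and for every radial Jacobi field, equality in both the Sturm comparison and the Cauchy--Schwarz step, i.e.\ $h_i''=\kappa^2 h_i$ and $\nabla_{\dot\gamma}J_i\parallel J_i$ on $(0,\ell)$; feeding this back into the Jacobi equation gives $\mathbf K(\dot\gamma(t),v)=-\kappa^2$ for all $t$ and all $v\perp\dot\gamma(t)$, i.e.\ $\mathbf K=-\kappa^2$ along $\nabla\rho$. Conversely, if $\mathbf K=-\kappa^2$ and $\overline{\mathbf S}=(n-1)h$ along $\nabla\rho$, the radial Jacobi fields are $J_i(t)=\mathrm{sn}_\kappa(t)\,\mathrm{sn}_\kappa(\ell)^{-1}E_i(t)$ with $E_i$ parallel, all inequalities become equalities, and the comparison is an identity. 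The one genuinely Finslerian ingredient — and the step requiring the most care — is the decomposition $\Delta_F\rho=\operatorname{tr}(\he\rho)-\overline{\mathbf S}$, which quantifies how the (non-canonical) measure $\mathsf m$ feeds the $S$-curvature into the Laplacian; once this is in hand, everything else reduces to the classical Riemannian Rauch/Sturm rigidity applied to the osculating metric along $\gamma$, and the localization to $\nabla\rho$ becomes transparent.
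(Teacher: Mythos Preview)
Your proposal is correct and aligns with the paper's treatment: the paper does not actually give a proof of Theorem~\ref{thm:comp} but simply cites Wu and Xin~\cite[Theorem~5.1]{wu2007comparison}, remarking that restricting the curvature hypotheses to the directions of $\nabla\rho$ ``does not affect the original proof.'' Your sketch is precisely that Wu--Xin argument---the decomposition $\Delta_F\rho=\operatorname{tr}_{g_{\nabla\rho}}(\he\rho)-\overline{\mathbf S}(\nabla\rho)$ followed by the radial Jacobi/Sturm comparison in the osculating metric $g_{\dot\gamma}$---with the localization point made explicit, so there is nothing to add.
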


\subsection{Special Finsler metrics} \label{sec:prelim:randers}

A Finsler metric on \(\Omega\subseteq\mathbb{R}^n\) is said to be \emph{projectively flat} if the geodesics are straight (Euclidean) lines as point sets. Due to Rapcsák~\cite{rapcsak}, we have the following characterization.
\begin{theorem}\label{thm:flat}
    Let \(F\colon T\Omega\to[0,\infty)\) be a Finlser metric on \(\Omega\subseteq \mathbb{R}^n\). The following conditions are equivalent:
    \begin{enumerate}[label=\textup{(\roman*)},labelindent=0cm,wide]
        \item \label{thm:flat:i} The Finsler space \((\Omega,F)\) is projectively flat.
        \item \label{thm:flat:ii} The geodesic coefficients of \(F\) are given by
              \begin{equation}\label{eq:flat:factor}
                  G^i(x,y)=P(x,y)y^i,\quad\mbox{where}\quad P(x,y)=\frac{1}{2F(x,y)}\cdot \frac{\partial F(x,y)}{\partial x^k}y^k.
              \end{equation}
        \item \label{thm:flat:iii} The metric \(F\) satisfies
              \begin{equation}\label{eq:flat:condition}
                  \frac{\partial^2 F(x,y)}{\partial x^k\partial y^i}y^k=\frac{\partial F(x,y)}{\partial x^i},\quad \forall i\in\{1,\dots,n\}.
              \end{equation}
    \end{enumerate}
\end{theorem}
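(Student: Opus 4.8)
The plan is to split the characterization into a purely algebraic equivalence \ref{thm:flat:ii}$\Leftrightarrow$\ref{thm:flat:iii} and a geometric one \ref{thm:flat:ii}$\Leftrightarrow$\ref{thm:flat:i}, the common engine being the contracted form of the defining formula for $G^i$ recalled in Section~\ref{sec:prelim}. First I would record that, writing $L\defeq\frac12F^2$ and contracting that formula with $g_{mi}$ while using $g_{jm}=\frac{\partial^2 L}{\partial y^j\partial y^m}$ together with Euler's identities for positively homogeneous functions, one obtains
\[
    2\,g_{il}(x,y)\,G^i(x,y)=\frac{\partial^2 L(x,y)}{\partial x^k\partial y^l}\,y^k-\frac{\partial L(x,y)}{\partial x^l}.
\]
Substituting $L=\frac12F^2$ and expanding (with subscripts denoting partial derivatives) via $L_{y^l}=FF_{y^l}$, $L_{x^l}=FF_{x^l}$, $L_{x^ky^l}=F_{x^k}F_{y^l}+FF_{x^ky^l}$, and $g_{il}y^i=L_{y^l}=FF_{y^l}$, this turns into
\[
    2\,g_{il}\,G^i=\big(F_{x^k}y^k\big)F_{y^l}+F\big(F_{x^ky^l}y^k-F_{x^l}\big).
\]

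For \ref{thm:flat:ii}$\Leftrightarrow$\ref{thm:flat:iii}: with $P=\frac{1}{2F}F_{x^k}y^k$ as in \eqref{eq:flat:factor} one has $2g_{il}(Py^i)=(2PF)F_{y^l}=(F_{x^k}y^k)F_{y^l}$, so subtracting from the last display yields $2g_{il}(G^i-Py^i)=F\big(F_{x^ky^l}y^k-F_{x^l}\big)$; since $g$ is positive definite and $F>0$ on $TM\setminus\{0\}$, the identity $G^i=Py^i$ holds for every $i$ if and only if \eqref{eq:flat:condition} holds. For \ref{thm:flat:ii}$\Rightarrow$\ref{thm:flat:i}, if $G^i=Py^i$ then a geodesic satisfies $\ddot\gamma^i=-2P(\gamma,\dot\gamma)\dot\gamma^i$, so its velocity solves a scalar linear ODE $\dot v=\mu(t)v$ and stays a positive multiple of $\dot\gamma(0)$; hence $\gamma$ traces a Euclidean straight line.

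The substantive direction is \ref{thm:flat:i}$\Rightarrow$\ref{thm:flat:ii}. Here I would fix $(x,y)\in TM\setminus\{0\}$, take the geodesic $\gamma$ with $\gamma(0)=x$, $\dot\gamma(0)=y$, and exploit that its image is a line: writing $\gamma(t)=x+s(t)v_0$ for a fixed direction $v_0$ gives $\ddot\gamma(t)=\frac{s''(t)}{s'(t)}\dot\gamma(t)$, so $\ddot\gamma$ is everywhere proportional to $\dot\gamma$; the geodesic equation then forces $G^i(\gamma,\dot\gamma)\parallel\dot\gamma$, and evaluating at $t=0$ yields $G^i(x,y)=\Lambda(x,y)\,y^i$ for a scalar $\Lambda$, which is positively $1$-homogeneous in $y$ because $G^i$ is $2$-homogeneous. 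To pin down $\Lambda$, I would insert $G^i=\Lambda y^i$ into the contracted identity above and contract once more with $y^l$: using $F_{y^l}y^l=F$ and $F_{x^ky^l}y^l=F_{x^k}$, the right-hand side collapses to $F\,F_{x^k}y^k$ and the left to $2\Lambda F^2$, whence $\Lambda=\frac{1}{2F}F_{x^k}y^k=P$, which is exactly \eqref{eq:flat:factor}.

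The main obstacle is precisely this last step: one must pass from the reparametrization-invariant ``point-set'' statement to a pointwise identity on all of $TM\setminus\{0\}$, and this rests on three observations — that the geodesics issuing from a point realize every tangent direction, that positive $2$-homogeneity of the spray forces the scalar factor $\Lambda$ to inherit $1$-homogeneity, and that this factor is then uniquely determined after a single contraction with $y$ via Euler's identities. Once the contracted spray identity is in hand, the equivalence \ref{thm:flat:ii}$\Leftrightarrow$\ref{thm:flat:iii} and the implication \ref{thm:flat:ii}$\Rightarrow$\ref{thm:flat:i} are routine.
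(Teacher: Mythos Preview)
The paper does not supply its own proof of this theorem: it attributes the result to Rapcs\'ak~\cite{rapcsak} and states it without argument, so there is no in-paper proof to compare against. Your proposal is correct and self-contained: the contracted spray identity $2g_{il}G^i=L_{x^ky^l}y^k-L_{x^l}$ is derived correctly from the formula for $G^i$ in Section~\ref{sec:prelim} via Euler's relations, the algebraic equivalence \ref{thm:flat:ii}$\Leftrightarrow$\ref{thm:flat:iii} follows cleanly, and in the geometric direction you properly justify that a straight-line geodesic forces $G^i\parallel y^i$ at every $(x,y)$ and then pin down the proportionality factor by a further contraction with $y^l$. This is the standard route (essentially Hamel--Rapcs\'ak), so your argument is in line with what the cited reference contains, even though the present paper only quotes the statement.
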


The following proposition is useful for computing the flag and \(S\)-curvature of a projectively flat metric. For the proof, we refer to Shen~\cite[Sections 6.3 and 7.3]{shen2001lectures}
\begin{proposition}\label{prop:flat:computations} Suppose that \((M,F,\mathsf{m})\) is a FMMM equipped with a projectively flat Finsler metric \(F\) and a smooth positive measure \(\mathsf{m}\) having density \(\sigma\). The flag and \(S\)-curvature can be computed as follows:
    \begin{align*}
        \mathbf{K}(x,y) & =\frac{1}{F(x,y)^2}\left(P(x,y)^2-\frac{\partial P(x,y)}{\partial x^i}y^i\right),                     \\
        \mathbf{S}(x,y) & =\frac{\partial G^i(x,y)}{\partial y^i}-\frac{y^i}{\sigma(x)}\frac{\partial \sigma(x)}{\partial x^i},
    \end{align*}
    where \(P\) and \(G^i\) are given by relation~\eqref{eq:flat:factor}. Note that in this case \(\mathbf{K}\) only depends on \(x\) and \(y\).
\end{proposition}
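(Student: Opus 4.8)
The plan is to substitute the projectively-flat normal form of the geodesic coefficients, $G^i(x,y)=P(x,y)y^i$ with $P$ given by \eqref{eq:flat:factor} (Theorem~\ref{thm:flat}), into the general coordinate definitions of the Riemann/flag curvature and of the $S$-curvature from Section~\ref{sec:prelim}, and then to collapse the resulting expressions using the homogeneity of $F$. Note first that $P$ is positively $1$-homogeneous in $y$ (as $F$ is $1$-homogeneous, $\partial F/\partial x^k$ remains $1$-homogeneous in $y$, and multiplication by $y^k$ and division by $2F$ yield degree $1$ overall). Writing $P_i=\partial P/\partial y^i$ and $P_{x^i}=\partial P/\partial x^i$, the identities used repeatedly are the Euler relations $y^iP_i=P$, $\;y^i(\partial P_j/\partial y^i)=0$, and $\;y^k(\partial P_j/\partial x^k)=\partial(y^kP_k)/\partial x^j=P_{x^j}$, together with the commutation of partial derivatives.

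For the flag curvature, I would plug $G^i=Py^i$ into the four summands of $R^i_k$ and expand each by the product rule. A direct (if lengthy) computation shows that, after the above Euler reductions, every term organises into the two-index pattern
\[
R^i_k=\Xi\,\delta^i_k+\mu_k\,y^i,\qquad \Xi:=P^2-y^m P_{x^m},
\]
for some covector $\mu_k=\mu_k(x,y)$ whose precise value is irrelevant. Contracting with $y^k$ and using $y^kP_k=P$ and $y^k(\partial P_k/\partial x^m)=P_{x^m}$ gives $\mu_ky^k=-\Xi$. Now recall that the Finslerian Riemann curvature is self-adjoint with respect to $g$, i.e.\ $g_{il}R^l_k=g_{kl}R^l_i$ (see \cite{bao2000introduction,shen2001lectures}); applied to the pattern above, with $y_k:=g_{kl}y^l$, this forces $\mu_ky_i=\mu_iy_k$ for all $i,k$, hence $\mu_k=c\,y_k$ for a scalar $c$, and then $\mu_ky^k=cF^2=-\Xi$ gives $c=-\Xi/F^2$. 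Therefore $R^i_k=\frac{\Xi}{F^2}\left(F^2\delta^i_k-y^iy_k\right)$, i.e.\ $F$ is of scalar flag curvature. Substituting this into \eqref{eq:prelim:K}, the numerator becomes $\frac{\Xi}{F^2}\big(F^2 g_{(x,y)}(v,v)-g_{(x,y)}(y,v)^2\big)$ while the denominator equals $g_{(x,y)}(y,y)g_{(x,y)}(v,v)-g_{(x,y)}(y,v)^2=F^2g_{(x,y)}(v,v)-g_{(x,y)}(y,v)^2$; the $v$-dependence cancels and $\mathbf{K}(x,y)=\Xi/F^2=\frac{1}{F^2}\big(P^2-y^mP_{x^m}\big)$, as claimed, and in particular $\mathbf{K}$ depends only on $x$ and $y$.

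For the $S$-curvature, projective flatness is not really used: the stated identity is the specialisation of the general formula $\mathbf{S}(x,y)=\partial_{y^i}G^i(x,y)-y^i\,\partial_{x^i}\log\sigma(x)$. To obtain it from \eqref{eq:prelim:S}, differentiate $\tau(\gamma(t),\dot\gamma(t))=\log\sqrt{\det g_{ij}(\gamma,\dot\gamma)}-\log\sigma(\gamma)$ at $t=0$ along the geodesic with $\dot\gamma(0)=y$, using $\ddot\gamma^i(0)=-2G^i$: the $\log\sigma$ part contributes $-y^i\partial_{x^i}\log\sigma$ directly, and the $\log\sqrt{\det g}$ part equals $\tfrac12 g^{ij}\big(y^k\partial_{x^k}g_{ij}-2G^k\partial_{y^k}g_{ij}\big)$, which coincides with $\partial_{y^i}G^i$ by differentiating the coordinate formula for $G^i$ (a short standard computation; cf.\ \cite[Section~7.3]{shen2001lectures}). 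I expect the only genuinely laborious point to be the expansion producing the two-term pattern $R^i_k=\Xi\delta^i_k+\mu_ky^i$: it is pure bookkeeping with the four Riemann summands, but that is where the work sits. The conceptual shortcut that removes the rest is the observation that the symmetry of $R_{ij}$ automatically promotes this pattern to the scalar-flag-curvature form, so $\mu_k$ never needs to be computed explicitly.
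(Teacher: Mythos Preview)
Your proposal is correct and is essentially the standard textbook derivation; note however that the paper does not give its own proof of this proposition but simply refers to Shen~\cite[Sections~6.3 and~7.3]{shen2001lectures}, so there is nothing in the paper to compare against beyond that citation. Your outline matches what one finds there: substituting $G^i=Py^i$ into the coordinate expression for $R^i_k$ produces exactly the pattern $R^i_k=\Xi\,\delta^i_k+\mu_k y^i$ with $\Xi=P^2-y^mP_{x^m}$ and $\mu_k=3P_{x^k}-(y^mP_{x^m})_{y^k}-PP_k$, and one then checks this is of scalar flag curvature $\Xi/F^2$.

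Two minor remarks. First, the Euler identity you list as ``$y^k(\partial P_j/\partial x^k)=P_{x^j}$'' is misstated: in fact $y^k\partial_{x^k}P_j=\partial_{y^j}(y^kP_{x^k})-P_{x^j}$, not $P_{x^j}$; this does not affect your argument since you never invoke that particular identity in the crucial step. Second, your use of the $g$-symmetry of $R_{ij}$ to force $\mu_k=c\,y_k$ without computing $\mu_k$ explicitly is a clean shortcut compared with the straight bookkeeping in Shen, where one verifies the full expression for $\mu_k$ directly; both routes are equally valid, and yours is a bit more conceptual.
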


In the sequel, we consider a special class of Finsler spaces, called \emph{Randers spaces}, which are important examples of Finsler manifolds: In many cases, they help us to observe and investigate conceptual differences between Riemannian and Finsler geometries. A Randers space is a differentiable manifold \(M\) endowed with a special Finsler metric \(F\colon TM\to[0,\infty)\) called \emph{Randers metric}, which is defined by
\begin{equation}\label{eq:randers:def}
    F(x,y)=\sqrt{a_{ij}(x)y^iy^j}+b_i(x)y^i,
\end{equation}
where \(a_{ij}(x)\diff x^i\diff x^j\) is a Riemannian metric, \(b_i(x)\diff x^i\) is a a \emph{one-form}, such that
\[
    \|b(x)\|_a\defeq\sqrt{a^{ij}(x)b_i(x)b_j(x)}<1, \quad\forall x\in M,
\]
and \(\{a^{ij}(x)\}\) denotes the inverse of \(\{a_{ij}(x)\}\).
The density of the Busemann--Hausdorff measure induced by a Raders metric \(F\) is given by
\begin{equation}\label{eq:Randers:measure}
    \sigma_F(x)=(1-\|b(x)\|_a^2)^\frac{n+1}{2}\sqrt{\det a_{ij}(x)}.
\end{equation}
For later use, define the \emph{Kronecker delta} by
\begin{equation}\label{eq:prelim:kronecker}
    \delta_{ij}\defeq \begin{cases}
        1, & \mbox{if } i=j,    \\
        0, & \mbox{if } i\ne j.
    \end{cases}
\end{equation}

The following observation proves to be useful in the sequel.
\begin{remark}\label{rem:Randers:flat} Let \(F\) be a Randers metric given  by~\eqref{eq:randers:def}. It the Riemannian metric \(\overline F(x,y)=\sqrt{a_{ij}(x)y^iy^j}\) satisfies~\eqref{eq:flat:condition}, and \(b_i\diff x^i=D\beta(x)\) for some smooth function \(\beta\), then \(F\) satisfies~\eqref{eq:flat:condition} as well. Thus, according to Theorem~\ref{thm:flat} the metric \(F\) is projectively flat. The claim easily follows from the symmetry of second order partial derivatives of the smooth function \(\beta\).
\end{remark}

\section{A Finslerian extension of Riccati pairs}\label{sec:RP}
In this section, we present a Finslerian extension of the approach of Riccati pairs, established by Kajántó, Kristály, Peter, and Zhao~\cite{riccatipair2023} in the Riemannian setting, which enables us to prove a Hardy-type inequality, by simply solving a corresponding Riccati-type ordinary differential inequality. The key ideas of the proof are similar to~\cite{riccatipair2023}, however, some extra workarounds are needed due to the specificities of the Finsler geometry. Our extension can be stated as follows.

\begin{theorem}\label{thm:RP}
    Let \((M,F,\mathsf{m})\) be a forward complete, non-compact FMMM, with dimension \(n\ge 2\) and reversibility \(\lambda\in(1,\infty)\). Let \(\Omega\subseteq M\) be a domain, \(p>1\), \(x_0\in M\) and \(\rho=d(x_0,\cdot)\) the distance from \(x_0\).
    Suppose that the couple \((L,W)\) is a \emph{\((p,\rho,w)\)-Riccati pair} in \((0,\sup_\Omega\rho)\) that is
    \begin{enumerate}[label=\textup{({\bfseries C\arabic*})},leftmargin=*]
        \item\label{cond:RP:c1}  \(L,W,w\colon(0,\sup_\Omega\rho)\to(0,\infty)\) such that \(L,W\) are continuous and \(w\) is of class \(C^1\);
        \item\label{cond:RP:c2}  \(\Delta_F \rho\geq L(\rho)\) in the distributional sense in \(\Omega\);
        \item\label{cond:RP:c3} there exists a function \(G\colon(0,\sup_\Omega\rho)\to(0,\infty)\) of class \(C^1\) such that the following Riccati ODI holds:
              \begin{equation}\label{eq:RP:ODI}
                  (G(t)w(t))'+G(t)w(t)L(t)-(p-1)G(t)^{p'}w(t)\geq  W(t)w(t), \quad\forall t\in (0,\sup\nolimits_\Omega\rho),
              \end{equation}
    \end{enumerate}
    where \(p'=\frac{p}{p-1}\). Then for every \(u\in C_0^\infty(\Omega)\) one has
    \begin{align}\label{eq:RP}
        \lambda^p\int_\Omega w(\rho)F^*(Du)^p\diff \mathsf{m}\ge \int_\Omega w(\rho)\max\{F^*(\pm Du)\}^p\diff \mathsf{m} & \ge \int_\Omega w(\rho)W(\rho)|u|^p\diff \mathsf{m}.
    \end{align}
\end{theorem}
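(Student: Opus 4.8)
The plan is to mimic the Riemannian Riccati-pair argument of \cite{riccatipair2023}, introducing the vector field $\mathbf{X} = G(\rho)\,\nabla\rho$ (more precisely its $p$-Laplacian analogue $G(\rho)^{p-1}\nabla\rho$, to be fixed by the homogeneity bookkeeping), computing its divergence via condition \ref{cond:RP:c2} and the chain rule, multiplying by $w(\rho)|u|^p$, integrating over $\Omega$, and then using integration by parts together with the pointwise algebra $|\mathbf{X}|$-versus-$Du$ to land on \eqref{eq:RP:ODI}. The left inequality in \eqref{eq:RP} is immediate from $\lambda_F(M)=\lambda$ and the elementary bound $\lambda F^*(Du)\ge \max\{F^*(\pm Du)\}$ combined with $w>0$; all the work is in the right inequality.

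First I would record the pointwise Finsler identities that replace the Riemannian ones. By \eqref{eq:prelim:grF} and \eqref{eq:prelim:lt:cor}, for a distance function $\rho$ one has the eikonal relations \eqref{eq:eikonal}, and for any smooth $u$ the inequality $\langle Du, \nabla\rho\rangle \le F^*(Du)\,F(\nabla\rho) = F^*(Du)$, while $\langle -Du,\nabla\rho\rangle \le F^*(-Du) \le \lambda F^*(Du)$; hence $|\langle Du,\nabla\rho\rangle|\le \lambda F^*(Du)$. Next, for $v\ge 0$ a $C^1$ function of one variable and $\rho$ a distance function, the chain rule for the divergence gives, $\mathsf{m}$-a.e.,
\begin{equation}\label{eq:plan:div}
  \di\!\big(v(\rho)\nabla\rho\big) = v'(\rho) + v(\rho)\,\Delta_F\rho \ge v'(\rho) + v(\rho)\,L(\rho),
\end{equation}
the inequality holding in the distributional sense by \ref{cond:RP:c2} once $v\ge 0$. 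I would apply this with $v(t) = G(t)w(t)$ (or the appropriate power, matched to $p$) to generate the left-hand side of \eqref{eq:RP:ODI}.

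The core computation: take $u\in C_0^\infty(\Omega)$, test the (distributional) inequality \eqref{eq:plan:div} against $|u|^p\ge 0$, and integrate by parts using \eqref{eq:prelim:ibp}:
\begin{equation}\label{eq:plan:ibp}
  \int_\Omega |u|^p\,\di\!\big(G(\rho)w(\rho)\nabla\rho\big)\diff\mathsf{m}
  = -\,p\int_\Omega |u|^{p-1}\operatorname{sgn}(u)\,G(\rho)w(\rho)\,\langle Du,\nabla\rho\rangle\diff\mathsf{m}.
\end{equation}
Bounding the right side by $p\int_\Omega |u|^{p-1} G(\rho)w(\rho)\,|\langle Du,\nabla\rho\rangle|\diff\mathsf{m}$ and then applying Young's inequality $ab \le \frac{a^{p'}}{p'} + \frac{b^p}{p}$ in the form that isolates a term $(p-1)G(\rho)^{p'}w(\rho)|u|^p$ against $\lambda^p w(\rho)F^*(Du)^p$ (here the factor $\lambda$ enters through $|\langle Du,\nabla\rho\rangle|\le \lambda F^*(Du)$), I rearrange to
\[
  \lambda^p\int_\Omega w(\rho)F^*(Du)^p\diff\mathsf{m}
  \ge \int_\Omega w(\rho)|u|^p\Big[(G(\rho)w(\rho))'/w(\rho) + G(\rho)L(\rho) - (p-1)G(\rho)^{p'}\Big]\diff\mathsf{m},
\]
and the bracket is $\ge W(\rho)$ by exactly the Riccati ODI \eqref{eq:RP:ODI}. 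The middle quantity $\int w(\rho)\max\{F^*(\pm Du)\}^p$ slots in because $\lambda F^*(Du)\ge \max\{F^*(\pm Du)\}\ge |\langle Du,\nabla\rho\rangle|$, so one may in fact run Young's inequality against $\max\{F^*(\pm Du)\}$ directly, giving the sharper middle inequality, and then dominate by $\lambda^p F^*(Du)^p$ for the leftmost one.

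\textbf{The main obstacle} I anticipate is the rigorous handling of the distributional/regularity issues rather than the formal algebra: $\rho$ is only Lipschitz (smooth off the cut locus $\mathrm{Cut}(x_0)$, which is $\mathsf{m}$-null), $\Delta_F\rho\ge L(\rho)$ holds only distributionally, and $|u|^p$ is merely $C^1$ when $p>1$ (so $p|u|^{p-1}\operatorname{sgn}(u)$ is continuous — fine — but one still needs to justify \eqref{eq:plan:ibp} with a Lipschitz, non-smooth field). The standard fix, which I would invoke, is to work on $\Omega\setminus \mathrm{Cut}(x_0)$ where everything is smooth, handle the cut locus by an exhaustion/cutoff argument as in \cite{wu2007comparison,riccatipair2023}, and note that the sign of the error terms produced by the comparison inequality \ref{cond:RP:c2} is favorable (the inequality \eqref{eq:plan:div} is in the right direction), so no delicate cancellation is needed. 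A secondary, genuinely Finslerian subtlety is that $\di$ and $\nabla$ are computed with respect to the (nonsymmetric) metric and the measure $\mathsf{m}$, so one must be careful that \eqref{eq:prelim:ibp} is stated for $\nabla u_2$ with $u_2$ a distance function and $\langle Du_1, \nabla\rho\rangle$, not $\langle D\rho, \nabla u_1\rangle$ — these differ in the irreversible setting, and it is precisely the asymmetry that forces the factor $\lambda$ into the estimate. Once the bookkeeping of which Legendre transform acts where is pinned down, the proof closes.
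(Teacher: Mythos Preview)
Your proposal is correct and follows essentially the same route as the paper. The only cosmetic difference is that the paper packages the divergence-plus-Young step as a single pointwise convexity inequality for $\xi\mapsto F^*(\xi)^p$, applied with $\xi=-\operatorname{sgn}(u)\,Du$ and $\eta=\operatorname{sgn}(u)\,G(\rho)^{1/(p-1)}u\,D\rho$; after expanding via the Legendre transform this is exactly your Young inequality, and the integration-by-parts, Laplace-comparison, and Riccati steps then proceed identically, ending with the same chain $\lambda F^*(Du)\ge\max\{F^*(\pm Du)\}\ge F^*(-\operatorname{sgn}(u)Du)$.
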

\begin{proof}
    The convexity of the map \(\xi\mapsto F^*(\xi)^p\) implies
    \begin{equation}\label{eq:RP:convexity:orig}
        F^*(\xi)^p\ge  F^*(\eta)^p +\left\langle \xi-\eta, \frac{\partial}{\partial \eta}F^*(\eta)^p\right\rangle,\qquad \forall \xi,\eta\in T^*\Omega.
    \end{equation}
    By using the chain rule and the properties of Legendre transforms (see relations~\eqref{eq:prelim:lt:cor}~\&~\eqref{eq:prelim:grF}), we obtain
    \begin{equation}\label{eq:RP:convexity}
        F^*(\xi)^p\ge p F^*(\eta)^{p-2}\langle \xi,\ell^*(\eta)\rangle-(p-1)F^*(\eta)^p,\qquad \forall \xi,\eta\in T^*\Omega.
    \end{equation}
    Introduce the notation \[
        \operatorname{sgn}(t) = \begin{cases}
            1  & \text{if }t>0,  \\
            0  & \text{if } t=0, \\
            -1 & \text{if } t<0,
        \end{cases}
    \]
    and let us make in inequality~\eqref{eq:RP:convexity} the following choices:
    \begin{equation}\label{eq:RP:choices}
        \xi=-\operatorname{sgn}(u)Du\quad\mbox{and}\quad\eta=\operatorname{sgn}(u)G(\rho)^\frac{1}{p-1}u D \rho.
    \end{equation}
    On the one hand, the positive homogeneity of \(F^*\) implies
    \begin{align*}
        p F^*(\eta)^{p-2} & =pG(\rho)^\frac{p-2}{p-1}F^*(D\rho)^{p-2}|u|^{p-2}, \\
        (p-1)F^*(\eta)^p  & =(p-1)G(\rho)^{p'}F^*(D\rho)^p|u|^{p}.
    \end{align*}
    On the other hand, since \(\ell^*\) is positive homogeneous and the canonical pairing is bilinear, we have
    \[
        \langle \xi,\ell^*(\eta)\rangle=-G(\rho)^\frac{1}{p-1}u\langle Du, \ell^*(D\rho)\rangle.
    \]
    Thus, the chain rule yields
    \[
        pF^*(\eta)^{p-2}\langle \xi,\ell^*(\eta)\rangle=-G(\rho)F^*(D\rho)^{p-2}\langle D(|u|^p), \ell^*(D\rho)\rangle.
    \]
    By the above computations, it follows that
    \[
        F^*(-\operatorname{sgn}(u)Du)^p\ge -G(\rho)F^*(D\rho)^{p-2}\langle D(|u|^p), \ell^*(D\rho)\rangle-(p-1)G(\rho)^{p'}F^*(D\rho)^p|u|^{p}.
    \]
    Multiplying both sides by \(w(\rho)>0\) and integrating over \(\Omega\) yields
    \begin{align}
        \int_\Omega w(\rho)F^*(-\operatorname{sgn}(u)Du)^p\diff \mathsf{m}
         & \ge -\int_\Omega w(\rho)G(\rho)F^*(D\rho)^{p-2}\langle D (|u|^p), \ell^*(D\rho)\rangle\diff \mathsf{m}\nonumber \\
         & \qquad -(p-1)\int_\Omega w(\rho)G(\rho)^{p'}F^*(D\rho)^p|u|^{p}\diff \mathsf{m}.\label{eq:main:afterInt}
    \end{align}
    First, by the eikonal equation~\eqref{eq:eikonal}, the factors \(F^*(D\rho)^{p-2}\) and \(F^*(D\rho)^p\) can be omitted. Second, the integration by parts formula~\eqref{eq:prelim:ibp} for \(u_1=w(\rho)G(\rho)|u|^p\) and \(u_2=\rho\) implies
    \[
        \int_\Omega (w(\rho)G(\rho)\Delta_F\rho) |u|^p\diff \mathsf{m}=-\int_\Omega(w(\rho)G(\rho))'\langle D\rho,\nabla\rho\rangle |u|^p\diff \mathsf{m}-\int_\Omega w(\rho)G(\rho)\langle D(|u|^p),\nabla\rho\rangle\diff \mathsf{m}.
    \]
    Next, since \(\nabla\rho=\ell^*(D\rho)\), by relation~\eqref{eq:prelim:lt} and the eikonal equation, the factor \(\langle D\rho,\nabla\rho\rangle\) can also be omitted. Thus, inequality~\eqref{eq:main:afterInt} can be rewritten as
    \[
        \int_\Omega w(\rho) F^*(-\operatorname{sgn}(u)Du)^p\diff \mathsf{m}\ge \int_\Omega \left((w(\rho)G(\rho))'+w(\rho)G(\rho)\Delta_F\rho-(p-1)w(\rho)G(\rho)^{p'}\right)|u|^p\diff \mathsf{m}.
    \]
    Next, since \(w\) and \(G\) are positive, condition~\ref{cond:RP:c2} implies
    \[
        \int_\Omega w(\rho)F^*(-\operatorname{sgn}(u)Du)^p\diff \mathsf{m}\ge \int_\Omega \left((w(\rho)G(\rho))'+w(\rho)G(\rho)L(\rho)-(p-1)w(\rho)G(\rho)^{p'}\right)|u|^p\diff \mathsf{m},
    \]
    thus, applying the Riccati ODI~\eqref{eq:RP:ODI} from condition~\ref{cond:RP:c3} yields
    \[
        \int_\Omega w(\rho)F^*(-\operatorname{sgn}(u)Du)^p\diff \mathsf{m}\ge \int_\Omega w(\rho)W(\rho)\diff \mathsf{m}.
    \]
    Finally, since \(\lambda_F(M)=\lambda_{F^*}(M)=\lambda\) (see relation~\eqref{eq:lF:lFs}), the following chain of inequalities hold:
    \begin{equation}\label{eq:RP:lambdaexplanation}
        \lambda F^*(Du)\ge \max\{F^*(\pm Du)\}\ge F^*(-\operatorname{sgn}(u)Du),
    \end{equation}
    which combined with the positivity of \(w\) finishes the proof.
\end{proof}

Several comments are in order.

\begin{remark}\phantomsection\label{rem:RP}
    \begin{enumerate}[label=\textup(\alph*),wide,labelindent=0cm,itemsep=1mm]
        \item\label{rem:RP:conditions} The above theorem, in fact, extends the combination of Theorems 3.1~\&~3.2 of~\cite{riccatipair2023}. For a more straightforward presentation we imposed more simpler but stricter conditions on the parameters: we assumed smoothness instead of local integrability, eliminated an extra parameter, considered only distances from a point, and so on. However, if the applications require, one can easily relax these conditions, without affecting the proof.

        \item\label{rem:RP:extension}
              Conceptually, Theorem~\ref{thm:RP} has the same benefit as its Riemannian counterpart: It reduces the proof of a Hardy inequality from the relation~\eqref{eq:RP} to the solvability of the Riccati ODI~\eqref{eq:RP:ODI}. In addition, the Riemannian applications presented in~\cite{riccatipair2023} gain natural Finslerian extensions: One only needs to formally replace \(|\nabla u|\) by either \(\max\{F^*(\pm Du)\}\) or \(\lambda F^*(Du)\) in the corresponding inequalities.

        \item \label{rem:RP:uncertainty}
              In~\cite{riccatipair2023}, the authors also provided an alternative formulation of the above result called \emph{multiplicative form}, which allows us to consider uncertainty principles (such as the Heisenberg--Pauli--Weyl uncertainty principle and the Hydrogen uncertainty principle) and Caffarelli--Kohn--Nirenberg inequalities. The proof relies on a scaling argument, which can also be adapted to the Finsler setting. By doing this, one can provide Finslerian extensions to the aforementioned inequalities. The details are left to the interested reader.
        \item 
              Theorem~\ref{thm:RP} highlights the effect of \(\mathbf{K}\), \(\overline{\mathbf{S}}\), and \(\mathbf{rev}\) on Hardy inequalities: Condition~\ref{cond:RP:c2} is ensured by the Laplace comparison theorem (see Theorem~\ref{thm:comp}), the exact form of~\(L\) depends on the upper bound of the flag curvature \(\mathbf{K}\) and the reduced \(S\)-curvature \(\overline{\mathbf{S}}\) along the geodesics starting from \(x_0\). The effect of reversibility is made precise in relation~\eqref{eq:RP:lambdaexplanation}.
        \item \label{rem:RP:limfunc} 
              Theorem~\ref{thm:RP} also gives some hints about the sharpness of Hardy-type inequalities: one can expect the inequalities of~\eqref{eq:RP} to be \emph{close} to the equality if the inequality of condition~\ref{cond:RP:c2}, the inequalities from~\eqref{eq:RP:lambdaexplanation}, and the convexity inequality~\eqref{eq:RP:convexity:orig} are close to equality. The first two of them are related to the geometry of the ambient space (see also Theorem~\ref{thm:comp}), while the third provides information concerning \(u\).

              Define the \emph{limit function} \(u^\star\) of the Hardy inequality~\eqref{eq:RP}, as a radially symmetric function \(u^\star=v(\rho)\) for some \(v\colon(0,\sup_\Omega\rho)\to \mathbb{R}\), for which \(\xi=\eta\) in relation~\eqref{eq:RP:choices}, and thus inequality~\eqref{eq:RP:convexity:orig} becomes an equality. Then we obtain
              \[
                  -(\log(v(t)))'=-\frac{{v}'(t)}{v(t)}=G(t)^\frac{1}{p-1},\quad \forall t\in(0,\sup\nolimits_\Omega\rho).
              \]
              For \(u=u^\star\) the integrals of the inequality~\eqref{eq:RP} are not convergent (there exists no extremal function), but considering its approximations/truncations may yield sharpness.
              To get close to the equality in~\ref{cond:RP:c2}~\&~\eqref{eq:RP:lambdaexplanation}, in Sections~\ref{sec:0:model}~\&~\ref{sec:k:model}, we construct some suitable spaces for these approximations. We note, however, that a general proof of sharpness (if exists) would not require such constructions.
    \end{enumerate}
\end{remark}

\subsection{Applications} In this section, by applying Theorem~\ref{thm:RP}, we prove a Finslerian version of a weighted Hardy inequality and a McKean-type spectral gap estimate. We note that our choice of parameters agree with the corresponding Riemannian counterparts from~\cite{riccatipair2023}. The first application can be stated as follows.
\begin{theorem}\label{thm:Hardy:genproof}
    Let \((M,F,\mathsf{m})\) be a forward complete, non-compact FMMM, with dimension \(n\ge 3\), and reversibility \(\lambda<\infty\). Let \(\Omega\subseteq M\) be a domain, \(x_0\in M\) and \(\rho=d(x_0,\cdot)\) the distance from \(x_0\). Suppose that \(\mathbf{K}\le 0\), \(\overline{\mathbf{S}}\le 0\) along \(\nabla \rho\). Let \(\alpha,p\in \mathbb{R}\) such that \(1<p<n+\alpha\).
    Then for every \(u\in C_0^\infty(\Omega)\) one has
    \begin{equation}\label{eq:Hardy:genproof}
        \lambda^p\displaystyle\int_\Omega \rho^\alpha\cdot F^*(Du)^p\diff \mathsf{m}    \ge\displaystyle\int_\Omega \rho^\alpha\cdot \max\{F^*(\pm Du)\}^p\diff \mathsf{m} \ge  c_{n,p,\alpha}\displaystyle\int_\Omega \rho^\alpha\frac{|u|^p}{\rho^p}\diff \mathsf{m},
    \end{equation}
    where \(c_{n,p,\alpha}=\left(\frac{n+\alpha-p}{p}\right)^p\).
\end{theorem}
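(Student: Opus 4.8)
The plan is to derive Theorem~\ref{thm:Hardy:genproof} as a direct application of the Riccati pair machinery of Theorem~\ref{thm:RP}, by exhibiting an explicit $(p,\rho,w)$-Riccati pair with the weight $w(t)=t^\alpha$, the Laplace lower bound $L(t)=\frac{n-1}{t}$, and the target potential $W(t)=c_{n,p,\alpha}\,t^{-p}$. First I would verify condition~\ref{cond:RP:c2}: since $\mathbf{K}\le 0$ and $\overline{\mathbf{S}}\le 0$ along $\nabla\rho$, the Laplace comparison theorem (Theorem~\ref{thm:comp}) with $\kappa=0$, $h=0$ gives $\Delta_F\rho\ge (n-1)\mathbf{ct}_0(\rho)=\frac{n-1}{\rho}$ distributionally on $\Omega$, so $L(t)=\frac{n-1}{t}$ is admissible. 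Condition~\ref{cond:RP:c1} is immediate since $t^\alpha$, $t^{-p}$, $\frac{n-1}{t}$ are smooth and positive on $(0,\sup_\Omega\rho)$.

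The heart of the argument is constructing the auxiliary function $G$ of condition~\ref{cond:RP:c3}. Motivated by Remark~\ref{rem:RP}/\ref{rem:RP:limfunc}, the limit function should be the power $v(t)=t^{-\frac{n+\alpha-p}{p}}$ (the Finslerian analogue of the classical Hardy extremal $|x|^{-(n-2)/2}$ for $p=2$, $\alpha=0$), which forces $G(t)^{1/(p-1)}=-v'(t)/v(t)=\frac{n+\alpha-p}{p}\cdot\frac{1}{t}$, i.e. the ansatz
\[
    G(t)=\left(\frac{n+\alpha-p}{p}\right)^{p-1} t^{-(p-1)}=c_{n,p,\alpha}^{\,1/p'}\, t^{-(p-1)}.
\]
I would then simply substitute this $G$ together with $w(t)=t^\alpha$ and $L(t)=\frac{n-1}{t}$ into the left-hand side of the Riccati ODI~\eqref{eq:RP:ODI}. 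Writing $\beta=\frac{n+\alpha-p}{p}$ so that $G=\beta^{p-1}t^{-(p-1)}$ and $G^{p'}=\beta^{p}t^{-p}$, one computes $G(t)w(t)=\beta^{p-1}t^{\alpha-p+1}$, hence $(G(t)w(t))'=\beta^{p-1}(\alpha-p+1)t^{\alpha-p}$, while $G(t)w(t)L(t)=(n-1)\beta^{p-1}t^{\alpha-p}$ and $(p-1)G(t)^{p'}w(t)=(p-1)\beta^{p}t^{\alpha-p}$. Collecting the $t^{\alpha-p}$-coefficients, the left-hand side equals
\[
    \beta^{p-1}\bigl(\alpha-p+1+n-1-(p-1)\beta\bigr)t^{\alpha-p}
    =\beta^{p-1}\bigl(p\beta-(p-1)\beta\bigr)t^{\alpha-p}
    =\beta^{p}\,t^{\alpha-p},
\]
using $\alpha+n-p=p\beta$. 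Since $\beta^{p}=c_{n,p,\alpha}$ and $W(t)w(t)=c_{n,p,\alpha}t^{\alpha-p}$, the ODI~\eqref{eq:RP:ODI} holds with equality, so $(G(t)w(t)),L,W$ form a valid $(p,\rho,w)$-Riccati pair. Theorem~\ref{thm:RP} then yields~\eqref{eq:Hardy:genproof} verbatim.

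I do not anticipate a serious obstacle: the only point requiring a little care is checking the hypothesis $p>1$ of Theorem~\ref{thm:RP} together with $1<p<n+\alpha$ guaranteeing $\beta=\frac{n+\alpha-p}{p}>0$, so that $G$ is genuinely positive and $C^1$ on $(0,\sup_\Omega\rho)$, and that forward completeness and non-compactness are inherited from the hypotheses. One should also note that $\rho^\alpha$ and $\rho^{-p}$ need not be globally integrable against $\mathsf{m}$, but this is irrelevant: Theorem~\ref{thm:RP} is applied to $u\in C_0^\infty(\Omega)$, so all integrals in~\eqref{eq:RP} are over a compact set away from issues at $\rho=0$ when $\alpha-p<0$ — and in fact one may invoke the standard density/truncation reasoning already built into the statement. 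Thus the proof reduces to the short verification above, and the mild ``main obstacle'' is purely bookkeeping: matching the parameters of Theorem~\ref{thm:RP} to the weighted setting and confirming the algebraic identity $p\beta-(p-1)\beta=\beta$ collapses the ODI exactly onto $W w$.
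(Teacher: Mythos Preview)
Your proposal follows essentially the same route as the paper: apply Theorem~\ref{thm:RP} with $w(t)=t^\alpha$, $L(t)=\frac{n-1}{t}$ (justified by Theorem~\ref{thm:comp} with $\kappa=h=0$), $W(t)=c_{n,p,\alpha}t^{-p}$, and $G(t)=(c_{n,p,\alpha})^{1/p'}t^{-(p-1)}$, and verify by direct computation that the Riccati ODI~\eqref{eq:RP:ODI} holds with equality. The algebra you carry out is correct and matches the paper exactly.

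There is, however, one genuine bookkeeping point you gloss over. You write that ``all integrals in~\eqref{eq:RP} are over a compact set away from issues at $\rho=0$'', but this is not automatic: nothing prevents $x_0\in\Omega$ from lying in the support of $u\in C_0^\infty(\Omega)$, and in that case the auxiliary function $w(\rho)G(\rho)|u|^p\sim \rho^{\alpha-p+1}$ appearing in the integration-by-parts step of Theorem~\ref{thm:RP} is singular at $x_0$. The paper handles this by first applying Theorem~\ref{thm:RP} on the punctured domain $M\setminus\{x_0\}$ (so that $u\in C_0^\infty(M\setminus\{x_0\})$ is supported away from the singularity), and then invoking the fact that the $p$-capacity of the single point $\rho^{-1}(0)$ is zero to extend the inequality to all of $C_0^\infty(M)$ (see \cite[Remark~4.2]{riccatipair2023}). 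Your vague appeal to ``standard density/truncation reasoning already built into the statement'' should be replaced by this explicit two-step argument; once that is done, your proof is complete and identical to the paper's.
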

\begin{proof}
    In Theorem~\ref{thm:RP} let us choose \(\Omega=M\setminus\{x_0\}\), as well as,
    \[w(t)=t^\alpha,\quad L(t)=\frac{n-1}{t},\quad W(t)= c_{n,p,\alpha}\cdot \frac{1}{t^p} ,\quad\mbox{and}\quad G(t)=(c_{n,p,\alpha})^\frac{p-1}{p}\cdot\frac{1}{t^{p-1}},\quad\forall t>0.\]
    Obviously, the above functions satisfy~\ref{cond:RP:c1}. The Laplace comparison (see Theorem~\ref{thm:comp}) implies that~\ref{cond:RP:c2} holds as well. By a simple computation, the Riccati ODI of condition~\ref{cond:RP:c3} is verified with equality. Thus, Theorem~\ref{thm:RP} implies that inequality~\eqref{eq:Hardy:genproof} holds for every \(u\in C_0^\infty(M\setminus\{x_0\})\). Finally, we use a result from capacity theory (see e.g., \cite[Remark 4.2]{riccatipair2023}): Since the \(p\)-capacity of \(\rho^{-1}(0)\subset M\) is zero, thus, inequality~\eqref{eq:Hardy:genproof} holds for every \(u\in C_0^\infty(M)\) as well.
\end{proof}
We notice that Zhao in~\cite[Theorem 1.1]{Z} showed that if \(\mathbf{K}\le 0\) (globally) and \(\mathbf{S}\ge0\) along \(\nabla \overleftarrow\rho\), where \(\overleftarrow\rho=d(\cdot, x_0)\), then one has
\[
    \int_\Omega  \overleftarrow\rho^\alpha\cdot \max\{F^*(\pm Du)\}^2\diff \mathsf{m}\ge c_{n,p,\alpha}\int_\Omega\overleftarrow\rho^\alpha\frac{|u|^p}{\overleftarrow\rho^p}\diff \mathsf{m},\quad \forall  u\in C_0^\infty(M);
\]
additionally, if \(F\) is reversible, then the constant \(c_{n,p,\alpha}\) is sharp. Since, \(\mathbf{S}\ge 0\) along \(\nabla\overleftarrow\rho\) if and only if \(\mathbf{S}\le 0\) along \(\nabla \rho\), this result perfectly aligns with inequality~\eqref{eq:Hardy:genproof}.

The second application can be stated as follows.

\begin{theorem}\label{thm:McKean:genproof}
    Let \((M,F,\mathsf{m})\) be a forward complete, non-compact FMMM, with dimension \(n\ge 3\), and reversibility \(\lambda<\infty\). Let \(\Omega\subseteq M\) be a domain, \(x_0\in M\) and \(\rho=d(x_0,\cdot)\) the distance from \(x_0\). Suppose that \(\mathbf{K}\le -\kappa^2\), \(\overline{\mathbf{S}}\le (n-1)h\) along \(\nabla \rho\), for some \(\kappa>h\ge0\), and let \(p>1\).
    Then for every \(u\in C_0^\infty(\Omega)\) one has
    \begin{equation}\label{eq:McKean:genproof}
        \lambda^p\int_\Omega F^*(Du)^p\diff \mathsf{m}  \ge\int_\Omega \max\{F^*(\pm Du)\}^p\diff \mathsf{m}  \ge  c_{n,p,\kappa,h} \int_\Omega |u|^p\diff \mathsf{m},
    \end{equation}
    where \(c_{n,p,\kappa,h}=\left(\frac{(n-1)(\kappa-h)}{p}\right)^p\).
\end{theorem}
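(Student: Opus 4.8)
The plan is to deduce Theorem~\ref{thm:McKean:genproof} from the master inequality of Theorem~\ref{thm:RP} by picking the weight and the Riccati pair exactly as in the analogous Riemannian computation from \cite{riccatipair2023}. Concretely, I would work on the punctured domain $\Omega\setminus\{x_0\}$ (to make $\rho$ smooth) and choose the trivial weight $w(t)\equiv 1$, the Laplace-comparison lower bound
\[
    L(t)=(n-1)\mathbf{ct}_\kappa(t)-(n-1)h,
\]
which is legitimate as condition~\ref{cond:RP:c2} precisely by Theorem~\ref{thm:comp} under the hypotheses $\mathbf{K}\le-\kappa^2$, $\overline{\mathbf{S}}\le(n-1)h$ along $\nabla\rho$, and the constant target $W(t)\equiv c_{n,p,\kappa,h}=\bigl(\tfrac{(n-1)(\kappa-h)}{p}\bigr)^p$. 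For the auxiliary function $G$ I would take a constant, namely $G(t)\equiv \bigl(\tfrac{(n-1)(\kappa-h)}{p}\bigr)^{p-1}=\gamma^{p-1}$ with $\gamma=\tfrac{(n-1)(\kappa-h)}{p}$, so that $G^{p'}=\gamma^{p}$.

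The second step is to verify condition~\ref{cond:RP:c3}, i.e. the Riccati ODI~\eqref{eq:RP:ODI}. Since $w\equiv1$ and $G\equiv\gamma^{p-1}$ is constant, $(G(t)w(t))'=0$, so the ODI reduces to
\[
    \gamma^{p-1}L(t)-(p-1)\gamma^{p}\ge \gamma^{p},
\]
that is $\gamma^{p-1}\bigl(L(t)-p\gamma\bigr)\ge 0$. Because $p\gamma=(n-1)(\kappa-h)$ and $\mathbf{ct}_\kappa(t)\ge\kappa$ for all $t>0$ (indeed $\kappa\coth(\kappa t)\ge\kappa$, and $\tfrac1t\ge 0$ in the $\kappa=0$ case — but here $\kappa>h\ge0$ forces $\kappa>0$), we get
\[
    L(t)=(n-1)\kappa\coth(\kappa t)-(n-1)h\ \ge\ (n-1)\kappa-(n-1)h=(n-1)(\kappa-h)=p\gamma,
\]
so the ODI holds (with equality only in the limit $t\to\infty$). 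This is the one genuine inequality to check; everything else is a matter of matching homogeneities, and it is elementary. Then Theorem~\ref{thm:RP} yields~\eqref{eq:McKean:genproof} for all $u\in C_0^\infty(\Omega\setminus\{x_0\})$.

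The final step is the removal of the point $x_0$: one extends the inequality from $C_0^\infty(\Omega\setminus\{x_0\})$ to $C_0^\infty(\Omega)$ using that a single point has zero $p$-capacity (as $p>1$ and $n\ge 3$, a point is $p$-polar), exactly as invoked in the proof of Theorem~\ref{thm:Hardy:genproof} via \cite[Remark 4.2]{riccatipair2023}; a standard cut-off approximation then transfers~\eqref{eq:McKean:genproof}. I do not anticipate a serious obstacle here — the only subtlety is making sure the Riccati pair conditions~\ref{cond:RP:c1}--\ref{cond:RP:c3} are met on the open interval $(0,\sup_\Omega\rho)$, where $L$ is continuous and positive (it is, since $\coth$ is decreasing to $1$ and $\kappa>h$ keeps $L(t)>0$), and $G,w$ are trivially $C^1$. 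If I had to name a main obstacle it would be purely bookkeeping: ensuring that the chain $\lambda^p F^*(Du)^p\ge \max\{F^*(\pm Du)\}^p\ge \dots$ from~\eqref{eq:RP} is preserved under the capacity/truncation argument, which is routine. Thus the proof is essentially ``plug the right constants into Theorem~\ref{thm:RP} and check one monotonicity of $\coth$.''
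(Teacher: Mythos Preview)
Your proposal is correct and follows essentially the same route as the paper: apply Theorem~\ref{thm:RP} with the constant choices $w\equiv1$, $W\equiv c_{n,p,\kappa,h}$, $G\equiv(c_{n,p,\kappa,h})^{(p-1)/p}$, and verify the Riccati ODI via $\kappa\coth(\kappa t)\ge\kappa$. The only cosmetic differences are that the paper takes $L(t)\equiv(n-1)(\kappa-h)$ (absorbing the $\coth$ estimate into the verification of~\ref{cond:RP:c2} rather than~\ref{cond:RP:c3}) and applies Theorem~\ref{thm:RP} directly on $\Omega$ without puncturing at $x_0$---your capacity removal step is harmless but unnecessary here, since $w$, $W$, $G$ are nonsingular.
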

\begin{proof}
    In Theorem~\ref{thm:RP} let us choose \(\Omega=M\), as well as
    \[
        w(t)\equiv 1, \quad L(t)\equiv (n-1)(\kappa-h) , \quad W(t)\equiv c_{n,p,\kappa,h},\quad\mbox{and}\quad G(t)\equiv  (c_{n,p,\kappa,h})^\frac{p-1}{p}.
    \]
    These functions satisfy~\ref{cond:RP:c1}, the Laplace comparison theorem (see Theorem~\ref{thm:comp}) implies
    \[
        \Delta_F (\rho)\ge (n-1)(\kappa\coth(\kappa \rho)-h)\ge (n-1)(\kappa-h) = L,
    \]
    hence~\ref{cond:RP:c2} holds as well. The Riccati ODI from~\ref{cond:RP:c3} is verified with inequality, thus, Theorem~\ref{thm:RP} proves inequality~\eqref{eq:McKean:genproof}.
\end{proof}

\section{Sharp Hardy inequalities -- non-positive curvature}\label{sec:0}
In this section, we present our sharpness results concerning Hardy-type inequalities assuming non-positive flag curvature. In Section~\ref{sec:0:model}, we present a {family} of Finsler spaces on which our sharpness results are established (see Theorem~\ref{thm:0}). In Section~\ref{sec:0:conditions}, we provide some sufficient conditions ensuring the sharpness of Hardy inequalities obtained by Riccati pairs, on the aforementioned spaces (see Theorem~\ref{thm:0:sharp}). Finally, in Section~\ref{sec:0:applications} we present an application: In Theorem~\ref{thm:0:app:sharp} we prove a sharpness result concerning a weighted Hardy inequality, as a consequence we also obtain Theorem~\ref{thm:intro:Hardy:sharp}.

In the rest of the paper, we use various properties of projectively flat Randers-type metrics; we refer to Section~\ref{sec:prelim:randers} for a brief summary on these special manifolds.

\subsection{Construction}\label{sec:0:model} According to the proof of Theorem~\ref{thm:RP} (see also Remark~\ref{rem:RP}/\ref{rem:RP:limfunc}) an \emph{ideal} space for discussing the sharpness of Hardy-type inequalities has a radially symmetric metric \(F\) and constant \(\mathbf{K}\), \(\overline{\mathbf{S}}\), and \(\mathbf{rev}\) along geodesics starting for a point \(x_0\). Unfortunately, such spaces are not available in the Finsler setting, however, we can make the following observation.

Consider the triple \((\mathbb{R}^n,F,\mathsf{m})\), where \(F\colon \mathbb{R}^n\times \mathbb{R}^n\to [0,\infty)\) is defined by
    \begin{equation}\label{eq:0:Fbase}
        F(x,y)=|y|-\frac{\langle x,y\rangle\theta}{|x|},\quad\text{if } x\ne O\quad\text{and}\quad F(O,y)=|y|,
    \end{equation}
    for some \(\theta\in(0,1)\) and \(\mathsf{m}\) stands for the Busemann--Hausdorff measure induced by \(F\). Here, \(O\in \mathbb{R}^n\) is the origin, \(|\cdot|\) and \(\langle\cdot,\cdot \rangle\) denote the norm and the inner product in \(\mathbb{R}^n\). The function \(F\) is {not} a Finsler metric, since it is not continuous at the origin. However, it has various remarkable properties (away from \(O\)).

    On the one hand, since relation~\eqref{eq:flat:condition} holds, Theorem~\ref{thm:flat} yields projectively flatness, i.e.\ all the geodesics are straight (Euclidean) lines. Denote \(\rho=d(O,\cdot)\) the Finslerian distance from the origin. Since the geodesics starting from the origin are precisely the integral curves of \(\nabla \rho\),  if follows that \(x\) is parallel with \(\nabla\rho\). Using Proposition~\ref{prop:flat:computations} and the homogeneity of the flag curvature and the reduced \(S\)-curvature we obtain
    \begin{equation}\label{eq:0:ks}
        \mathbf{K}(x,\nabla\rho(x))=\mathbf{K}(x,x)=0\quad\text{and}\quad \overline{\mathbf{S}}(x,\nabla\rho(x))=\overline{\mathbf{S}}(x,x)=0,\quad \forall x\in \mathbb{R}^n, x\ne O,
    \end{equation}
    which precisely means that \(\mathbf{K}=0\), \(\overline{\mathbf{S}}=0\) along \(\nabla \rho\).

    On the other hand, by an easy computation we obtain for every \(x,y\in  \mathbb{R}^n\) that
    \begin{equation}\label{eq:0:lam}
        \textbf{rev}(x,y)\le \textbf{rev}(x,x)=\frac{1+\theta}{1-\theta}\defeq \lambda,\quad\text{if }x\ne O,\quad\text{and}\quad \mathbf{rev}(O,y)=1,
    \end{equation}
    hence \(\lambda_F(\mathbb{R}^n)=\lambda\in(1,\infty)\). If \(u=v(\rho)\) is a radially symmetric function with \(v'<0\), then one has equality in~\eqref{eq:RP:lambdaexplanation}. The latter provides the difference between the two common forms of Hardy inequalities in the Finsler setting, compare e.g., inequalities~\eqref{eq:intro:Hardy:Finsler}~\&~\eqref{eq:intro:Hardy:Finsler:lam}.

    To remedy the discontinuity at the origin, one could simply puncture the space (considering only \(\mathbb{R}^n\setminus\{O\}\)), which results in loss of computational convenience: By construction, the distance from \(O\) is easier to  compute than the distance from any other base point.
    An other option (what is chosen here) is to `smooth out' the jump at the origin, which can be done by a careful approximation of \(F\), partially retaining the aforementioned favorable properties. We have the following result.

    \begin{theorem}\label{thm:0} Let \(\lambda\in (1,\infty)\) and \(\theta=\frac{\lambda-1}{\lambda+1}\). Construct the family \(\mathcal{F}_{0,0,\lambda}=\{(\mathbb{R}^n,F_\varepsilon,\mathsf{m}_\varepsilon)\}_{\varepsilon>0}\) of FMMMs, where  \(F_\varepsilon\colon T\mathbb{R}^n\simeq\mathbb{R}^n\times\mathbb{R}^n\to [0,\infty)\) is defined by
        \begin{equation}\label{eq:0:Fe}
            F_\varepsilon(x,y)=|y|-\frac{\langle x,y\rangle(|x|+2\varepsilon)\theta}{(|x|+\varepsilon)^2},
        \end{equation}
        and \(\mathsf{m}_\varepsilon\) is the Busemann--Hausdorff measure induced by \(F_\varepsilon\). The following statements hold:
        \begin{enumerate}[label=(\roman*)]
            \item For every \(\varepsilon>0\), the triple \((\mathbb{R}^n, F_\varepsilon,\mathsf{m}_\varepsilon)\) is a Finsler metric measure manifold. In addition, \(F_\varepsilon\) is a projectively flat Randers metric, with reversibility \(\lambda_{F_\varepsilon}(\mathbb{R}^n)=\lambda\).
            \item For every \(x\in \mathbb{R}^n\) with \(x\ne 0\) one has
                  \[
                      \mathbf{K}_\varepsilon(x,x)\to 0,\quad \overline{\mathbf{S}}_\varepsilon(x,x)\to 0,\quad\mbox{and}\quad \mathbf{rev}_\varepsilon(x,x)\to\lambda,\quad\text{as}\quad  \varepsilon\to 0.
                  \]
            \item For every \(\varepsilon>0\) and \(x\in \mathbb{R}^n\), one has
                  \[
                      \mathbf{K}_\varepsilon(x,x)\le 0,\quad \overline{\mathbf{S}}_\varepsilon(x,x)\le 0,\quad\mbox{and}\quad
                      \mathbf{rev}_\varepsilon(x,x)\le\lambda.
                  \]
        \end{enumerate}
    \end{theorem}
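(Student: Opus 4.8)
The plan is to treat $F_\varepsilon$ as a Randers metric $F_\varepsilon = \bar F + b_i\diff x^i$ with $\bar F(x,y) = |y|$ the Euclidean metric and one-form $b_i(x)\diff x^i = -\frac{(|x|+2\varepsilon)\theta}{(|x|+\varepsilon)^2}\langle x, \cdot\rangle$, and then run all three parts through the explicit Randers/projectively-flat machinery recalled in Section~\ref{sec:prelim:randers}. For part (i), I would first check $\|b(x)\|_{\bar a} < 1$ for all $x$: since $\bar a$ is Euclidean, $\|b(x)\|_{\bar a} = \frac{(|x|+2\varepsilon)\theta|x|}{(|x|+\varepsilon)^2}$, and one checks this rational function of $|x|$ stays below $\theta < 1$ (its supremum over $|x|\ge 0$ is $\theta$, attained only in the limit $|x|\to\infty$), which also delivers $\mathbf{rev}_{F_\varepsilon}$-control and hence $\lambda_{F_\varepsilon}(\mathbb{R}^n) = \frac{1+\theta}{1-\theta} = \lambda$. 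Smoothness and strong convexity are then automatic for a Randers metric with smooth data (the one-form is smooth because $x\mapsto \frac{(|x|+2\varepsilon)\theta}{(|x|+\varepsilon)^2}x$ extends smoothly through the origin—note the apparent singularity at $O$ is cancelled by the factor $x$), so $(\mathbb{R}^n, F_\varepsilon, \mathsf{m}_\varepsilon)$ is a genuine FMMM. Projective flatness follows from Remark~\ref{rem:Randers:flat}: $\bar F$ is Euclidean hence projectively flat, and $b_i\diff x^i = D\beta_\varepsilon$ for an explicit radial primitive $\beta_\varepsilon(x) = \phi_\varepsilon(|x|)$ with $\phi_\varepsilon'(r) = -\frac{(r+2\varepsilon)\theta r}{(r+\varepsilon)^2}$, so $b$ is exact and Remark~\ref{rem:Randers:flat} applies.

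For parts (ii) and (iii), the key simplification is that we only need curvatures \emph{in the flagpole/radial direction} $y = x$, i.e.\ along $\nabla\rho$. I would compute $\rho = d(O,\cdot)$ explicitly: since geodesics from $O$ are Euclidean rays and $F_\varepsilon$ restricted to a radial direction is $F_\varepsilon(r\tfrac{x}{|x|}, \tfrac{x}{|x|}) = 1 - \frac{(r+2\varepsilon)\theta r}{(r+\varepsilon)^2}$ (evaluating the one-form on the outward unit radial vector), one gets $\rho(x) = \int_0^{|x|}\bigl(1 - \frac{(r+2\varepsilon)\theta r}{(r+\varepsilon)^2}\bigr)\diff r$, a monotone function of $|x|$. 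Then using the projective factor $P(x,y) = \frac{1}{2F}\partial_{x^k}F\, y^k$ from Theorem~\ref{thm:flat} evaluated at $y=x$, Proposition~\ref{prop:flat:computations} gives $\mathbf{K}_\varepsilon(x,x) = \frac{1}{F(x,x)^2}\bigl(P(x,x)^2 - \partial_{x^i}P(x,x)\,x^i\bigr)$, which after the substitution $r = |x|$ reduces to an explicit one-variable expression $\mathbf{K}_\varepsilon(x,x) = \mathcal{K}_\varepsilon(r)$. Similarly, $\mathbf{S}_\varepsilon(x,x) = \partial_{y^i}G^i(x,x) - \frac{x^i}{\sigma_\varepsilon}\partial_{x^i}\sigma_\varepsilon$, where $\sigma_\varepsilon = (1 - \|b\|^2)^{(n+1)/2}$ by relation~\eqref{eq:Randers:measure} (the Euclidean factor $\sqrt{\det a_{ij}} = 1$), again reducing to an explicit radial function $\overline{\mathbf{S}}_\varepsilon(x,x) = \mathcal{S}_\varepsilon(r)$. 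Part (ii) is then a routine limit $\varepsilon\to 0$: as $\varepsilon\to 0$, $F_\varepsilon \to F$ of \eqref{eq:0:Fbase} locally away from $O$, and by \eqref{eq:0:ks}--\eqref{eq:0:lam} the limiting radial curvatures are $0, 0$ and the limiting reversibility in the radial direction is $\lambda$, so $\mathcal{K}_\varepsilon(r)\to 0$, $\mathcal{S}_\varepsilon(r)\to 0$, $\mathbf{rev}_\varepsilon(x,x)\to\lambda$ pointwise in $x\ne O$.

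The main obstacle is part (iii): showing the \emph{sign} conditions $\mathcal{K}_\varepsilon(r)\le 0$ and $\mathcal{S}_\varepsilon(r)\le 0$ hold for \emph{all} $r\ge 0$ and all $\varepsilon > 0$, not just asymptotically. This is where the delicate choice of the approximant $F_\varepsilon$—with the specific numerator $(|x|+2\varepsilon)$ and denominator $(|x|+\varepsilon)^2$—pays off, and it is presumably reverse-engineered precisely so these inequalities come out clean. Concretely, after computing $\mathcal{K}_\varepsilon$ and $\mathcal{S}_\varepsilon$ one is left with checking that certain explicit polynomials in $r$ and $\varepsilon$ with (one hopes) manifestly nonnegative coefficients are $\ge 0$; the $(|x|+\varepsilon)^2$ denominator makes $\phi_\varepsilon'$ a nice rational function whose derivatives telescope. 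The $\mathbf{rev}_\varepsilon(x,x)\le\lambda$ bound is the easy part of (iii), already handled by the $\|b(x)\|_{\bar a} \le \theta$ computation from part (i): $\mathbf{rev}_\varepsilon(x,x) = \frac{1 + \|b\|}{1 - \|b\|}$ evaluated along the radial direction, increasing in $\|b\|\le\theta$, hence $\le\frac{1+\theta}{1-\theta} = \lambda$. I would organize the write-up so that the bulk is the single-variable curvature computation, isolate the two sign inequalities as the crux, and verify them by factoring the resulting polynomials (expecting something like a sum of monomials in $r$ and $\varepsilon$ with positive coefficients once the sign is flipped).
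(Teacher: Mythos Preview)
Your proposal is correct and follows essentially the same route as the paper: identify the Randers structure, bound $\|b\|_a \le \theta$ to get both the FMMM property and the reversibility, invoke Remark~\ref{rem:Randers:flat} for projective flatness via an explicit radial primitive, and compute the radial curvatures through Proposition~\ref{prop:flat:computations}. The paper confirms your expectation for (iii): the closed forms are $\mathbf{K}_\varepsilon(x,x) = -\tfrac{3\varepsilon^2(r+\varepsilon)^4\theta(1-\theta)}{((r+\varepsilon)^2 - r(r+2\varepsilon)\theta)^4}$ and $\overline{\mathbf{S}}_\varepsilon(x,x) = -\tfrac{(n+1)\varepsilon^2(r+\varepsilon)\theta}{(r+\varepsilon)^4 - r^2(r+2\varepsilon)^2\theta^2}$, both manifestly nonpositive, and the $\varepsilon^2$ factors in the numerators deliver (ii) directly---so your separate continuity-of-curvatures argument for (ii), while valid, is redundant once you have the formulas needed for (iii).
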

    \begin{proof}
        Denote \(r=|x|\). Using the standard notation of Randers metrics (see Section~\ref{sec:prelim:randers}) we have
        \[
            F_\varepsilon(x,y)=\sqrt{a_{ij}(x)y^iy^j}+b_i(x)y^i,\quad\mbox{where}\quad a_{ij}(x)=\delta_{ij}\quad\mbox{and}\quad b_i(x)=-\frac{x^i(r+2\varepsilon)\theta}{(r+\varepsilon)^2},
        \]
        while \(\delta_{ij}\) is the Kronecker delta (see~\eqref{eq:prelim:kronecker}).  Since \(a^{ij}(x)=\{a_{ij}(x)\}^{-1}=\delta_{ij}\), we have
        \[\|b(x)\|_a = \sqrt{a^{ij}(x)b_i(x)b_j(x)}=\frac{r(r+2\varepsilon)\theta}{(r+\varepsilon)^2}<\theta<1.\]
        Obviously \(F_\varepsilon\) is smooth, hence it is a Randers metric on \(\mathbb{R}^n\).
        In addition, according to Remark~\ref{rem:Randers:flat}, the metric \(F_\varepsilon\) is projectively flat. Indeed, the Euclidean metric is projectively flat, and one has
        \[
            b_i\diff x^i=D\beta(x),\quad\mbox{where}\quad \beta(x)=-\left(r+\frac{\varepsilon^2}{r+\varepsilon}\right)\theta.
        \]
        Let \(O\in \mathbb{R}^n\) be the origin. For \(\rho_\varepsilon(x)=d(O,x)\) and \(\overleftarrow{\rho_\varepsilon} (x)=d(x,O)\) we obtain:
        \[
            \rho_\varepsilon(x)=r\cdot \frac{r(1-\theta)+ \varepsilon }{r+\varepsilon}\quad\mbox{and}\quad
            \overleftarrow{\rho_\varepsilon} (x) =r\cdot \frac{r(1+\theta)+ \varepsilon }{r+\varepsilon}.
        \]
        In addition, one has
        \begin{equation}\label{eq:0:drho}
             D\rho_\varepsilon(x)=\left(\frac{1}{r}-\frac{  (r+2 \varepsilon )\theta}{(r+\varepsilon )^2}\right)\cdot x\quad\mbox{and}\quad\nabla\rho_\varepsilon(x)=\frac{(r+\varepsilon)^2}{r(r+\varepsilon)^2-r^2(r+2\varepsilon)\theta}\cdot x.
        \end{equation}
        According to relation~\eqref{eq:Randers:measure}, the density of the Busemann--Hausdorff measure \(\mathsf{m}_\varepsilon\) induced by \(F_\varepsilon\) is
        \[
            \sigma_{F_\varepsilon}(x)=\left(1-\frac{r^2 (r+2 \varepsilon)^2\theta^2}{(r+\varepsilon)^4}\right)^\frac{n+1}{2}.
        \]
        By Proposition~\ref{prop:flat:computations}, for every \(\varepsilon>0\) and \(x\in \mathbb{R}^n\) we obtain
        \begin{align*}
            \mathbf{K}_{\rho_\varepsilon}(r)
             & \defeq\mathbf{K}_\varepsilon(x,x)=-\frac{3 \varepsilon ^2 (r+\varepsilon)^4   \theta (1-\theta)}{\left((r+\varepsilon)^2- r (r+2 \varepsilon)\theta \right)^4}\le 0,                                                        \\
            \overline{\mathbf{S}}_{\rho_\varepsilon}(r)
             & \defeq
            \overline{\mathbf{S}}_\varepsilon(x,x)=-\frac{(n+1)\varepsilon ^2(r+\varepsilon ) \theta }{\left(r+\varepsilon \right)^4- r^2 \left(r+2 \varepsilon\right)^2\theta^2}\le 0,                                                    \\
            \mathbf{rev}_{\rho_\varepsilon}(r)
             & \defeq\mathbf{rev}_\varepsilon(x,x)=\frac{1+\|b(x)\|_a}{1-\|b(x)\|_a}=\frac{1+\frac{r(r+2\varepsilon)\theta}{(r+\varepsilon)^2}}{1-\frac{r(r+2\varepsilon)\theta}{(r+\varepsilon)^2}}\le \frac{1+\theta}{1-\theta}=\lambda.
        \end{align*}
        Considering the limit cases of the above inequalities, we obtain
        \[
            \lim_{r\to\infty}\mathbf{K}_{\rho_\varepsilon}(r)=0,\quad
            \lim_{r\to\infty}\overline{\mathbf{S}}_{\rho_\varepsilon}(r)=0,\quad
            \lim_{r\to\infty}\mathbf{rev}_{\rho_\varepsilon}(r)=\lambda,\quad \forall \varepsilon>0,
        \]
        and
        \[
            \lim_{\varepsilon\to0}\mathbf{K}_{\rho_\varepsilon}(r)=0,\quad
            \lim_{\varepsilon\to0}\overline{\mathbf{S}}_{\rho_\varepsilon}(r)=0,\quad \lim_{\varepsilon\to0}\mathbf{rev}_{\rho_\varepsilon}(r)=\lambda,\quad \forall r>0.
        \]
        Moreover, for every \(\varepsilon>0\) and \(x,y\in \mathbb{R}^n\) one has
        \begin{equation}\label{eq:0:revprop}
            \mathbf{rev}_\varepsilon(x,y)=\frac{F_\varepsilon(x,-y)}{F_\varepsilon(x,y)}\le \frac{F_\varepsilon(x,-x)}{F_\varepsilon(x,x)}=\mathbf{rev}_\varepsilon(x,x)\le \lambda,\quad\text{if }x\ne O\quad\text{and}\quad
            \mathbf{rev}_\varepsilon(O,y)=1,
        \end{equation}
        thus, on also has \(\lambda_{F_\varepsilon}(\mathbb{R}^n)=\lambda\), for every \(\varepsilon>0\).
    \end{proof}

    We make the following of observations.
    \begin{remark}\phantomsection\label{rem:0}
        \begin{enumerate}[label=\textup(\alph*),wide,labelindent=0cm,itemsep=1mm]
            \item The above construction can be motivated as follows. First, Randers-type metrics are often considered as a first step towards Finsler geometry from Riemannian geometry, it is quite natural to consider them in this case as well.  Second, every radially symmetric Randers metric with respect to some \(x_0\), needs to have \(\mathbf{rev}(x_0,\cdot)=1\) (as in~\eqref{eq:0:revprop}), otherwise it would be discontinuous at \(x_0\). Since our \(F\) in~\eqref{eq:0:Fbase} has constant \(\mathbf{rev}\ne 1\) along \(\nabla \rho\), some smoothing/perturbation is unavoidable at \(x_0\).  Indeed, one can consider other perturbations instead of the one in~\eqref{eq:0:Fe}. The latter is chosen due to its simplicity and computational convenience.

            \item\label{rem:0:b} One can consider a measure \(\mathsf{m}_\varepsilon\) with density \(\sigma_\varepsilon=\sigma_F\cdot e^{-s\rho_\varepsilon}\), to obtain
                  \[
                      \sup_{x\in \mathbb{R}^n}\overline{\mathbf{S}}_\varepsilon(x,x)\le s\quad\mbox{and}\quad \lim_{\varepsilon\to 0} \overline{\mathbf{S}}_\varepsilon(x,x)=s,\quad \forall x\in \mathbb{R}^n\setminus\{x_0\}.
                  \]
                  In this section we only need \(s=0\). This observation, however, is exploited in Section~\ref{sec:k}.

            \item\label{rem:0:c} We note that in case of the reduced \(S\)-curvature, \(\overline{\mathbf{S}}\le0\) holds also globally. Surprisingly, in case of the flag curvature, \(\mathbf{K}\le 0\) is not a global property. For example, one has
                  \[
                      \mathbf{K}_\varepsilon(x,-x)=\frac{3 \varepsilon ^2 (r+\varepsilon)^4   \theta (1+\theta)}{\left((r+\varepsilon)^2+ r (r+2 \varepsilon)\theta \right)^4}>0,\quad \forall x\in \mathbb{R}^n.
                  \]
            \item\label{rem:0:d} Observe that \(\max\{F^{*}_\varepsilon(\pm D\rho_\varepsilon)\}=F^{*}_\varepsilon(D\rho_\varepsilon)=1\). Indeed, relation~\eqref{eq:eikonal} implies \(F^*_\varepsilon(D\rho_\varepsilon)=1\), and one has 
                  \[
                      F^*_\varepsilon(-D\rho_\varepsilon)=\frac{1-\frac{r(r+2\varepsilon)}{(r+\varepsilon)^2}\theta}{1+\frac{r(r+2\varepsilon)}{(r+\varepsilon)^2}\theta}\defeq h_\varepsilon(r),\quad\mbox{where}\quad \frac{1-\theta}{1+\theta}\le h_\varepsilon(r)\le1.
                  \]
                Note that \(F^*_\varepsilon\) can be computed as in Shen~\cite[Example 3.1.1]{shen2001lectures}, while \(D\rho_\varepsilon\) is given by relation~\eqref{eq:0:drho}.
        \end{enumerate}
    \end{remark}

    \subsection{Sufficient conditions for sharpness}\label{sec:0:conditions}
    In this section, we present sufficient conditions for the sharpness of Hardy inequalities provided by Riccati pairs on the spaces of \(\mathcal{F}_{0,0,\lambda}\), introduced in Theorem~\ref{thm:0}.

    Let \((\mathbb{R}^n,F_\varepsilon,\mathsf{m}_\varepsilon)\in\mathcal{F}_{0,0,\lambda}\) for some \(\varepsilon>0\) and \(\lambda\in(1,\infty)\). Then for every  \(x\in \mathbb{R}^n\), one has
    \[
        \mathbf{K}_\varepsilon(x,x)\le 0,\quad \overline{\mathbf{S}}_\varepsilon(x,x)\le 0,\quad\mbox{and}\quad
        \mathbf{rev}_\varepsilon(x,x)\le\lambda=\lambda_{F_\varepsilon}(\mathbb{R}^n).
    \]
    Denote \(\rho=d(O,\cdot)\) the distance from the origin. The Laplace comparison theorem (see Theorem~\ref{thm:comp}) implies
    \[
        \Delta_{F_\varepsilon}(\rho_\varepsilon)\ge \frac{n-1}{\rho_\varepsilon}\defeq L^{0,0}(\rho_\varepsilon).
    \]
    Let \(R\in(0,+\infty]\), \(L(t)\le L^{0,0}(t)\) for every \(t\in(0,R)\), and define
\[\Omega_\varepsilon=\{x\in \mathbb{R}^n : \rho_\varepsilon(x)<R\}.\]

Suppose that
\((L,W)\) is a \emph{\((p,\rho_\varepsilon,w)\)-Riccati pair} in \((0,R)\), that is \(W\colon(0,R)\to (0,\infty)\) is continuous, \(w\colon(0,R)\to (0,\infty)\) is of class \(C^1\), and there exists \(G\colon(0,R)\to(0,\infty)\) of class \(C^1\) such that
\begin{equation}\label{eq:0:RP:odi}
    (G(t)w(t))'+G(t)w(t)L(t)-(p-1)G(t)^{p'}w(t)\geq  W(t)w(t), \quad\forall t\in (0,R).
\end{equation}
Then by Theorem~\ref{thm:RP}, for every \(u\in C_0^\infty(\Omega_\varepsilon)\) one has:
\begin{align}\label{eq:0:RP:int:lam}
    \lambda^p\int_{\Omega_\varepsilon} w(\rho_\varepsilon) F^*_\varepsilon(Du)^p\diff \mathsf{m}_\varepsilon    & \ge\int_{\Omega_\varepsilon} w(\rho_\varepsilon) W(\rho_\varepsilon) |u|^p\diff \mathsf{m}_\varepsilon.
\end{align}
As in Remark~\ref{rem:RP}/\ref{rem:RP:limfunc}, suppose that a function \(G>0\) verifies the Riccati ODI~\eqref{eq:0:RP:odi} with equality, and define \(v\colon (0,R)\to\mathbb{R}\) by
\begin{equation}\label{eq:vdef:0}
    -(\log v(t))'=-\frac{v'(t)}{v(t)}=G(t)^\frac{1}{p-1}.
\end{equation}
Since \(G\) is smooth and positive, it follows that \(v\) is smooth, positive, and decreasing. Introduce the notation
\begin{equation}\label{eq:0:I}
    \mathcal{I}(f;a,b)=\int_a^b f(t)t^{n-1}\diff t,
\end{equation}
for arbitrary continuous functions \(f\colon[a,b]\to\mathbb{R}\), with \(0<a<b<R\).

We have the following result.
\begin{theorem}\label{thm:0:sharp}
    Fix \(\delta_0>0\) and let \(t_1=t_1(\delta),t_2=t_2(\delta),t_3=t_3(\delta),t_4=t_4(\delta)\) be continuous functions admitting a (possibly infinite) limit at infinity, such that for every \(\delta>\delta_0\) one has \(0<t_1<t_2<t_3<t_4<R\). Suppose that the following limits hold true:
    \begin{align*}
        \lim_{\delta\to\infty} l_0(\delta) & =0,\quad\mbox{where}\quad l_0(\delta)\defeq \frac{v(t_2)^p\cdot \mathcal{I}(w;t_1,t_2)}{(t_2-t_1)^p\cdot \mathcal{I}(wWv^p;t_2,t_3)}+\frac{v(t_3)^p\cdot \mathcal{I}(w;t_3,t_4)}{(t_4-t_3)^p\cdot \mathcal{I}(wWv^p;t_2,t_3)}, \\
        \lim_{\delta\to\infty} l_1(\delta) & =1,\quad\mbox{where}\quad l_1(\delta)\defeq\max_{t\in [t_2,t_3]} \frac{G(t)^{p'}}{W(t)}.
    \end{align*}
    Then inequality~\eqref{eq:0:RP:int:lam} is sharp on \(\mathcal{F}_{0,0,\lambda}\), in the sense that it no longer holds on each FMMM of \(\mathcal{F}_{0,0,\lambda}\) if we multiply its right hand side by a constant \(c>1\).
\end{theorem}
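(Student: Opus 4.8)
The plan is to fix an arbitrary $(\mathbb{R}^n,F_\varepsilon,\mathsf{m}_\varepsilon)\in\mathcal{F}_{0,0,\lambda}$ and produce a family of test functions $\{u_\delta\}_{\delta>\delta_0}\subset W^{1,p}_0(\Omega_\varepsilon)$ whose Rayleigh quotient
\[
   q(\delta)\defeq\frac{\lambda^p\int_{\Omega_\varepsilon}w(\rho_\varepsilon)F^*_\varepsilon(Du_\delta)^p\diff\mathsf{m}_\varepsilon}{\int_{\Omega_\varepsilon}w(\rho_\varepsilon)W(\rho_\varepsilon)|u_\delta|^p\diff\mathsf{m}_\varepsilon}
\]
tends to $1$ as $\delta\to\infty$; since~\eqref{eq:0:RP:int:lam} already forces $q(\delta)\ge1$ (for $C_0^\infty$ functions, hence for $W^{1,p}_0$ functions by density and continuity of both functionals in $W^{1,p}$), this shows that $1$ cannot be improved, which is exactly the asserted sharpness. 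Following Remark~\ref{rem:RP}/\ref{rem:RP:limfunc}, I would take $u_\delta$ to be a truncation of the limit function $v(\rho_\varepsilon)$: set $u_\delta=v_\delta(\rho_\varepsilon)$, where $v_\delta$ equals $v$ on $[t_2,t_3]$, equals the affine function running from $0$ at $t_1$ to $v(t_2)$ at $t_2$ on $[t_1,t_2]$, the affine function from $v(t_3)$ at $t_3$ to $0$ at $t_4$ on $[t_3,t_4]$, and is $0$ off $[t_1,t_4]$. By~\eqref{eq:vdef:0}, $v$ is smooth, positive and strictly decreasing, so $u_\delta$ is a well-defined Lipschitz function with compact support in $\Omega_\varepsilon=\{\rho_\varepsilon<R\}$. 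Since $Du_\delta=v_\delta'(\rho_\varepsilon)D\rho_\varepsilon$, the eikonal equation~\eqref{eq:eikonal} and the explicit values of $F^*_\varepsilon(\pm D\rho_\varepsilon)$ from Remark~\ref{rem:0}/\ref{rem:0:d} give $F^*_\varepsilon(Du_\delta)=|v_\delta'(\rho_\varepsilon)|$ on the inner annulus $\{t_1<\rho_\varepsilon<t_2\}$ (where $v_\delta'>0$) and $F^*_\varepsilon(Du_\delta)=|v_\delta'(\rho_\varepsilon)|\,h_\varepsilon(|x|)$ on $\{t_2<\rho_\varepsilon<t_4\}$ (where $v_\delta'<0$), with $\tfrac1\lambda\le h_\varepsilon\le1$ and, most importantly, $h_\varepsilon(r)\to\tfrac1\lambda$ as $r\to\infty$ (this is the single place where the irreversibility of $F_\varepsilon$ enters).

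The next step is to split $\Omega_\varepsilon$ into the three annuli $A_1=\{t_1<\rho_\varepsilon<t_2\}$, $A_2=\{t_2<\rho_\varepsilon<t_3\}$, $A_3=\{t_3<\rho_\varepsilon<t_4\}$ and to reduce all integrals to one-dimensional ones. By the formulas for $\rho_\varepsilon$ and $\sigma_{F_\varepsilon}$ in Theorem~\ref{thm:0}, the substitution $s=\rho_\varepsilon(x)$ (an increasing function of $|x|$) turns $\int_{\{a<\rho_\varepsilon<b\}}g(\rho_\varepsilon)\diff\mathsf{m}_\varepsilon$ into $\int_a^b g(s)J_\varepsilon(s)s^{n-1}\diff s$ times the area of the unit sphere, where $J_\varepsilon$ is a fixed positive continuous function, bounded between two positive constants on $(0,\infty)$, with $J_\varepsilon(s)\to(1-\theta^2)^{\frac{n+1}{2}}(1-\theta)^{-n}$ as $s\to\infty$ (because $\rho_\varepsilon(x)\sim(1-\theta)|x|$ and $\sigma_{F_\varepsilon}(x)\to(1-\theta^2)^{\frac{n+1}{2}}$). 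Hence every integral appearing in $q(\delta)$ is, up to a fixed multiplicative constant, one of the quantities $\mathcal{I}(\cdot\,;\cdot,\cdot)$ of~\eqref{eq:0:I}. On $A_1$ and $A_3$ the derivative $|v_\delta'|$ is the constant $v(t_2)/(t_2-t_1)$, resp.\ $v(t_3)/(t_4-t_3)$, so using $h_\varepsilon\le1$ and the crude lower bound $\int_{\Omega_\varepsilon}wW|u_\delta|^p\diff\mathsf{m}_\varepsilon\ge\int_{A_2}wWv^p\diff\mathsf{m}_\varepsilon$, the part of $q(\delta)$ coming from $A_1\cup A_3$ is bounded by a fixed constant times $l_0(\delta)$, hence tends to $0$. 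On $A_2$ one has $u_\delta=v(\rho_\varepsilon)$ and, differentiating~\eqref{eq:vdef:0}, $|v'|^p=v^pG^{p'}\le l_1(\delta)\,Wv^p$; combining this with $\lambda h_\varepsilon\le\lambda$ everywhere and, for any fixed $\eta>0$, $\lambda h_\varepsilon<1+\eta$ on the set $\{|x|>T_\eta\}$, one bounds $\lambda^p\int_{A_2}wF^*_\varepsilon(Du_\delta)^p\diff\mathsf{m}_\varepsilon$ by $l_1(\delta)$ times the sum of $(1+\eta)^p\int_{A_2}wWv^p\diff\mathsf{m}_\varepsilon$ and a leftover integral over the fixed bounded set $\{|x|\le T_\eta\}$. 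Dividing by $\int_{A_2}wWv^p\diff\mathsf{m}_\varepsilon$ and letting $\delta\to\infty$ then $\eta\to0$, the part of $q(\delta)$ coming from $A_2$ has $\limsup\le1$, so altogether $\limsup_{\delta\to\infty}q(\delta)\le1$, which together with $q(\delta)\ge1$ gives $q(\delta)\to1$.

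The one place that needs genuine care, and which I expect to be the main obstacle, is that the $A_2$-estimate tacitly needs the support of $u_\delta$ to be pushed out to $|x|\to\infty$ (so that $h_\varepsilon\to\tfrac1\lambda$ there and the leftover integral over $\{|x|\le T_\eta\}$ is negligible against $\int_{A_2}wWv^p\diff\mathsf{m}_\varepsilon$). This is not an extra hypothesis but a consequence of $l_0(\delta)\to0$: if $t_2(\delta)$ stayed bounded along a subsequence, then so would $t_1(\delta)$, and a short case distinction -- using $p>1$, so that $\mathcal{I}(w;t_1,t_2)/(t_2-t_1)^p$ stays bounded below by a positive constant whenever $t_2-t_1$ stays bounded -- shows that $l_0(\delta)\to0$ can hold only if $\mathcal{I}(wWv^p;t_2,t_3)\to\infty$, i.e.\ $\int_{A_2}wWv^p\diff\mathsf{m}_\varepsilon\to\infty$, which makes the leftover negligible; and if instead $t_2(\delta)\to\infty$, then $A_2\cap\{|x|\le T_\eta\}$ is empty for all large $\delta$. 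Once $q(\delta)\to1$ is established, the sharpness follows immediately: for any $c>1$, the inequality obtained from~\eqref{eq:0:RP:int:lam} by multiplying its right-hand side by $c$ is violated by $u_\delta$ for all sufficiently large $\delta$. Besides this escape-to-infinity bookkeeping, the only other nontrivial ingredient is the change-of-variables comparison of the $\mathsf{m}_\varepsilon$-integrals with the $\mathcal{I}$-integrals; everything else reduces to the estimates already packaged into $l_0$ and $l_1$, together with the geometric fact $h_\varepsilon(r)\to1/\lambda$ recorded in Theorem~\ref{thm:0} and Remark~\ref{rem:0} -- which is precisely the feature distinguishing these irreversible spaces from the reversible setting.
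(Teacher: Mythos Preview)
Your approach differs from the paper's in one essential respect: you \emph{fix} a single $\varepsilon>0$ and try to prove sharpness on that particular space, using only that $h_\varepsilon(r)\to 1/\lambda$ as $r\to\infty$. The paper instead lets $\varepsilon=\varepsilon(\delta)\to 0$, and this is not a matter of taste. The sharpness asserted is ``on the family $\mathcal{F}_{0,0,\lambda}$'': for each $c>1$ one needs to exhibit \emph{some} member of the family (some $\varepsilon$) and some test function on which the $c$-boosted inequality fails. The paper obtains the key estimate
\[
   \frac{\lambda^p I}{J}\le \lambda^p h_\varepsilon(t_2)^p\, l_1(\delta)+\frac{\lambda^p C}{c}\, l_0(\delta),
\]
and then chooses $\varepsilon$ according to $\lim_{\delta\to\infty}t_2(\delta)$: if this limit is positive it takes $\varepsilon=1/\delta$, while if $t_2(\delta)\to 0$ it takes $\varepsilon=t_2(\delta)^2$, so that in either case $h_\varepsilon(t_2(\delta))\to 1/\lambda$.

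Your argument has a genuine gap precisely in the case $t_2(\delta)\to 0$, which the hypotheses of the theorem do allow. You correctly deduce that $l_0\to 0$ forces $\mathcal{I}(wWv^p;t_2,t_3)\to\infty$, but your conclusion that this ``makes the leftover negligible'' is unjustified: the leftover integral is over $A_2\cap\{|x|\le T_\eta\}$, i.e.\ over $[t_2,\min(t_3,T_\eta')]$, and when the divergence of $\mathcal{I}(wWv^p;t_2,t_3)$ comes from the lower endpoint (as it must when $t_2\to 0$ and $t_3$ stays bounded), the leftover diverges at the \emph{same} rate. For a concrete instance, take the Hardy data $w(t)=t^\alpha$, $W(t)=c_{n,p,\alpha}t^{-p}$, $v(t)^p=t^{-(n+\alpha-p)}$: then $w(t)W(t)v(t)^p t^{n-1}=c_{n,p,\alpha}/t$, so with $t_2\to 0$ and $t_3$ fixed one has $\mathcal{I}(wWv^p;t_2,T_\eta')/\mathcal{I}(wWv^p;t_2,t_3)\to 1$, not $0$. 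This is not a repairable technicality for fixed $\varepsilon$: near the origin $h_\varepsilon(r)\to 1$, so $\lambda^p h_\varepsilon^p\approx\lambda^p$ there and the extra factor $\lambda^p$ cannot be absorbed. Varying $\varepsilon$ with $\delta$ is exactly what rescues the argument.

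When $\lim_{\delta\to\infty}t_2(\delta)>0$ your reasoning does go through (and in the application, Theorem~\ref{thm:0:app:sharp}, one has $t_2=\delta\to\infty$), so your outline would prove the applied result. But as a proof of Theorem~\ref{thm:0:sharp} in its stated generality, the fixed-$\varepsilon$ strategy does not cover the case $t_2\to 0$.
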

\begin{proof}
    Let \(\varepsilon>0\) and define the \emph{limit function} \(u^\star_\varepsilon\colon \Omega\to (0,\infty)\) of inequality~\eqref{eq:0:RP:int:lam} by \[u^\star_\varepsilon(x)=v(\rho_\varepsilon(x)).\]
    Denote \(T=(t_1,t_2,t_3,t_4)\) and construct
    \[u^\star_{\varepsilon,T}=v_T(\rho_\varepsilon),\]
    where the function \(v_T\) and its derivative satisfy
    \begin{equation}\label{eq:0:trunc}
        v_T(t)=\begin{cases}
            0,                           & \mbox{if }0<t\le t_1,      \\
            v(t_2)\frac{t-t_1}{t_2-t_1}, & \mbox{if }t_1\le t\le t_2, \\
            v(t),                        & \mbox{if }t_2\le t\le t_3, \\
            v(t_3)\frac{t_4-t}{t_4-t_3}, & \mbox{if }t_3\le t\le t_4, \\
            0,                           & \mbox{if }t_4\le t<R,
        \end{cases}\quad\mbox{and}\quad v_T'(t)=\begin{cases}
            0,                       & \mbox{if }0<t<t_1,   \\
            \frac{v(t_2)}{t_2-t_1},  & \mbox{if }t_1<t<t_2, \\
            v'(t),                   & \mbox{if }t_2<t<t_3, \\
            -\frac{v(t_3)}{t_4-t_3}, & \mbox{if }t_3<t<t_4, \\
            0,                       & \mbox{if }t_4< t<R.
        \end{cases}
    \end{equation}
    Note that \(v'_T\) is not defined in \(t_1,t_2,t_3\) and \(t_4\). In addition, both \(v_T\) and \(v'_T\) are supported in \([t_1,t_4]\). For a schematic illustration of \(v_T\) we refer to Figure~\ref{fig:0:trunc}.
    \begin{figure}
        \includegraphics[width=0.5\textwidth]{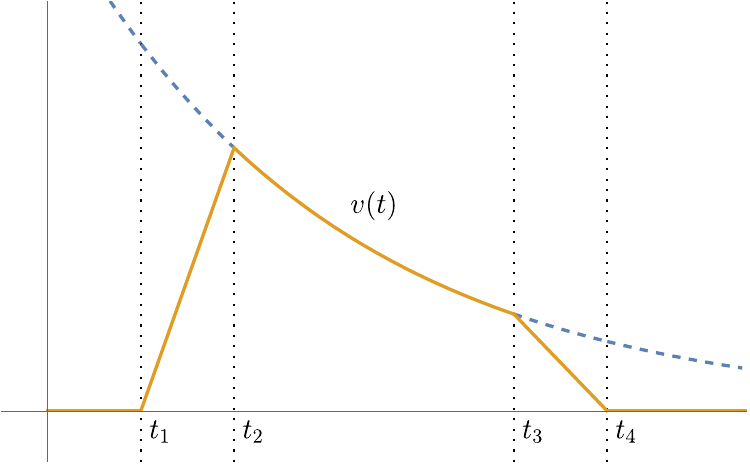}
        \caption{Illustration of the ``tent-like'' function \(v_T(t)\): coincides with \(v(t)\) on \([t_2,t_3]\),\\ linearized on both \([t_1,t_2]\) and \([t_3,t_4]\), and zero elsewhere.}
        \label{fig:0:trunc}
    \end{figure}

    To prove the sharpness of inequality~\eqref{eq:0:RP:int:lam}, we show that under a suitable limiting procedure, one has 
    \[
        \frac{\lambda^p\displaystyle\int_{\Omega_\varepsilon} w(\rho_\varepsilon) F^*_\varepsilon(Du^\star_{\varepsilon,T})^p\diff \mathsf{m}_\varepsilon}{
        \displaystyle\int_{\Omega_\varepsilon} w(\rho_\varepsilon) W(\rho_\varepsilon) (u_{\varepsilon,T}^\star)^p\diff \mathsf{m}_\varepsilon}\to 1.
    \]
    As a first step, we use polar transforms to reduce the above integrals to one dimension. We also establish some fine estimates on the distance and the density of the measure.

    Using the notations \(r=|x|\) and \(t=\rho_\varepsilon(x)\), it follows that
    \[t=r\cdot \frac{r(1-\theta)+ \varepsilon }{r+\varepsilon}\quad\iff\quad r=\frac{t-\varepsilon+\sqrt{(t+\varepsilon)^2-4t\varepsilon\theta}}{2(1-\theta)}\defeq \varphi_\varepsilon(t),\]
    hence we obtain
    \begin{equation}\label{eq:0:dm}
        \sigma_\varepsilon(r)r^{n-1}\diff r=\sigma_\varepsilon(\varphi_\varepsilon(t))\varphi_\varepsilon(t)^{n-1}\varphi_\varepsilon'(t)\diff t\defeq\psi_\varepsilon(t)\diff t.
    \end{equation}
    Since for every \(t\in(0,R)\) one has
    \[
        t\le \varphi_\varepsilon(t)\le\frac{t}{1-\theta},\quad 1\le \varphi'_\varepsilon(t)\le \frac{1}{1-\theta},\quad\mbox{and}\quad\left(1-\theta^2\right)^\frac{n+1}{2}\le \sigma_\varepsilon(\varphi_\varepsilon(t))\le 1,
    \]
    we obtain
    \begin{equation}\label{eq:0:cdef}
        ct^{n-1}\defeq\left(1-\theta^2\right)^\frac{n+1}{2}t^{n-1}\le \psi_\varepsilon(t)\le \frac{t^{n-1}}{(1-\theta)^n}\defeq Ct^{n-1}.
    \end{equation}
    Note that \(c\) and \(C\) are independent of \(\varepsilon\).

    Since \(v\) is positive and decreasing, the positive homogeneity of \(F^*_\varepsilon\) and the relations from~\eqref{eq:0:trunc} imply
    \begin{equation}\label{eq:0:F:trunc}
        F^*_\varepsilon(Du^\star_{\varepsilon,T})=F^*_\varepsilon(v_T'(\rho_\varepsilon)D\rho_\varepsilon)=\begin{cases}
            \frac{v(t_2)}{t_2-t_1}F^*_\varepsilon(D\rho_\varepsilon), & \text{if } \rho_\varepsilon\in(t_1,t_2),         \\
            |v'(\rho_\varepsilon)|F^*_\varepsilon(-D\rho_\varepsilon) & \text{if } \rho_\varepsilon\in(t_2,t_3),         \\
            \frac{v(t_3)}{t_4-t_3}F^*_\varepsilon(-D\rho_\varepsilon) & \text{if } \rho_\varepsilon\in(t_3,t_4),         \\
            0,                                                        & \text{if }\rho_\varepsilon\in(0,t_1)\cup(t_4,R).
        \end{cases}
    \end{equation}
    According to Remark~\ref{rem:0}/\ref{rem:0:d}, one has
    \[F^*_\varepsilon(D\rho_\varepsilon)=1,\quad\mbox{and}\quad F^*_\varepsilon(-D\rho_\varepsilon)=\frac{1-\frac{r(r+2\varepsilon)}{(r+\varepsilon)^2}\theta}{1+\frac{r(r+2\varepsilon)}{(r+\varepsilon)^2}\theta}\defeq h_\varepsilon(r),\quad\mbox{where}\quad \frac{1-\theta}{1+\theta}\le h_\varepsilon(r)\le1.\]
    By a simple computation, we obtain that \(h_\varepsilon(r)\) is decreasing in \(r\), increasing in \(\varepsilon\), and approaches to \(\frac{1-\theta}{1+\theta}=\frac{1}{\lambda}\) as \(\varepsilon\to 0\). Since \(r=\varphi_\varepsilon(t)\ge t\), we also have
    \begin{equation}\label{eq:0:hred}
        h_\varepsilon(\varphi_\varepsilon(t))\le h_\varepsilon(t)\le 1.
    \end{equation}

    In the sequel, denote by \(\omega\) the volume of the Euclidean unit ball of dimension \(n\).

    On the one hand, relations~\eqref{eq:0:dm}~and~\eqref{eq:0:F:trunc} yield
    \begin{align*}
        I\defeq\frac{1}{n\omega} \int_{\Omega_\varepsilon} w(\rho_\varepsilon) F^*_\varepsilon(Du^\star_{\varepsilon,T})^p\diff \mathsf{m}_\varepsilon &=\frac{v(t_2)^p}{(t_2-t_1)^p}\int_{t_1}^{t_2} w(t) \psi_\varepsilon(t)\diff t + \int_{t_2}^{t_3} w(t)|v'(t)|^p h_\varepsilon^p(\varphi_\varepsilon(t)) \psi_\varepsilon(t)\diff t
   \\
        &\qquad  +\frac{v(t_3)^p}{(t_4-t_3)^p}\int_{t_3}^{t_4} w(t) h_\varepsilon^p(\varphi_\varepsilon(t))\psi_\varepsilon(t)\diff t.
    \end{align*}
    Applying relations~\eqref{eq:0:cdef}~\&~\eqref{eq:0:hred}, and using the definition of \(\mathcal{I}\) from~\eqref{eq:0:I}, it follows that
    \begin{align*}
        I & \le \frac{v(t_2)^p}{(t_2-t_1)^p}\int_{t_1}^{t_2} w(t) \psi_\varepsilon(t)\diff t+ \int_{t_2}^{t_3} w(t)|v'(t)|^p h_\varepsilon^p(t) \psi_\varepsilon(t)\diff t +\frac{v(t_3)^p}{(t_4-t_3)^p}\int_{t_3}^{t_4} w(t) h_\varepsilon^p(t)\psi_\varepsilon(t)\diff t
        \\&\le \frac{Cv(t_2)^p}{(t_2-t_1)^p}\int_{t_1}^{t_2} w(t) t^{n-1}\diff t+ h_\varepsilon^p(t_2)\int_{t_2}^{t_3} w(t)|v'(t)|^p \psi_\varepsilon(t)\diff t +\frac{C h_\varepsilon^p(t_3)v(t_3)^p}{(t_4-t_3)^p}\int_{t_3}^{t_4} w(t)t^{n-1}\diff t\\
                                         & =\frac{Cv(t_2)^p\cdot \mathcal{I}(w;t_1,t_2)}{(t_2-t_1)^p}+ h_\varepsilon^p(t_2)\int_{t_2}^{t_3} w(t)|v'(t)|^p \psi_\varepsilon(t)\diff t +\frac{C v(t_3)^p\cdot\mathcal{I}(w;t_3,t_4)}{(t_4-t_3)^p}.
    \end{align*}
    By relation~\eqref{eq:vdef:0}, for every \(t\in[t_2,t_3]\) one has
    \[
        \frac{1}{W(t)}\left|\frac{v'(t)}{v(t)}\right|^p=\frac{1}{W(t)}\left(-\frac{v'(t)}{v(t)}\right)^p=\frac{G(t)^{p'}}{W(t)}.
    \]
    The above function is continuous, thus, it attains its maximum on \([t_2,t_3]\). Hence, we have
    \begin{align*}
         \int_{t_2}^{t_3} w(t)|v'(t)|^p \psi_\varepsilon(t)\diff t&=\int_{t_2}^{t_3} \frac{1}{W(t)}\left|\frac{v'(t)}{v(t)}\right|^p w(t) W(t)|v(t)|^p \psi_\varepsilon(t)\diff t\\
         &\le  \max_{t\in[t_2,t_3]} \frac{G(t)^{p'}}{W(t)}\cdot \int_{t_2}^{t_3} w(t) W(t) (v(t))^p \psi_\varepsilon(t)\diff t.
    \end{align*}

    On the other hand, we obtain
    \begin{align}
        \nonumber J&\defeq 
        \frac{1}{n\omega}
        \displaystyle\int_{\Omega_\varepsilon} w(\rho_\varepsilon) W(\rho_\varepsilon) (u_{\varepsilon,T}^\star)^p\diff \mathsf{m}_\varepsilon=\int_{0}^{R} w(t) W(t) (v_T(t))^p \psi_\varepsilon(t)\diff t\\\label{eq:0:estimates} &\,\ge \int_{t_2}^{t_3} w(t) W(t) (v(t))^p \psi_\varepsilon(t)\diff t\ge c\cdot \mathcal{I}(wWv^p;t_2,t_3).
    \end{align}
    
    Combining the above results, and using both estimates of~\eqref{eq:0:estimates}, it follows that
    \begin{align*}
        \frac{\lambda^p I}{J}
         & \le \lambda^p h_\varepsilon^p(t_2)\cdot  \max_{t\in[t_2,t_3]} \frac{G(t)^{p'}}{W(t)}+\frac{\lambda^p C}{c}\left(\frac{v(t_2)^p\cdot \mathcal{I}(w;t_1,t_2)}{(t_2-t_1)^p\cdot \mathcal{I}(wWv^p;t_2,t_3)}+\frac{v(t_3)^p\cdot \mathcal{I}(w;t_3,t_4)}{(t_4-t_3)^p\cdot \mathcal{I}(wWv^p;t_2,t_3)}\right) \\
         & =\lambda^p h_\varepsilon^p(t_2) l_1(\delta)+\frac{\lambda^p C}{c}l_0(\delta).
    \end{align*}
    Observe that the above inequality holds for every \(\varepsilon>0\) and \(\delta>\delta_0\), in addition, \(\delta\), \(T=(t_1,t_2,t_3,t_3)\), \(c\), \(C\), \(\lambda\), \(l_0\), and \(l_1\) are independent of \(\varepsilon\). To prove the sharpness of~\eqref{eq:0:RP:int:lam}, it is enough to choose for every \(\delta>\delta_0\) an \(\varepsilon=\varepsilon(\delta)\) such that \(h_{\varepsilon}(t_2(\delta))\to\frac{1}{\lambda}\) as \(\delta\to\infty\). Indeed, in this case, since \(l_0\to0\) and \(l_1\to1\) as \(\delta\to\infty\) we obtain \(\lambda^pI/J\to1\) as \(\delta\to \infty\).

    By assumption there exists \[\lim_{\delta\to\infty} t_2(\delta)\defeq l\in[0,R].\] We distinguish two cases: If \(l>0\), then choose \(\varepsilon=1/\delta\), to obtain
    \[
        \lim_{\delta\to \infty} h_{\varepsilon}(t_2(\delta))= \lim_{\delta\to \infty}\frac{1-\frac{t_2(\delta)(t_2(\delta)+2/\delta)}{(t_2(\delta)+1/\delta)^2}\theta}{1+\frac{t_2(\delta)(t_2(\delta)+2/\delta)}{(t_2(\delta)+1/\delta)^2}\theta}=\frac{1-\theta}{1+\theta}=\frac{1}{\lambda}.
    \]
    If \(l=0\), then choosing \(\varepsilon=t_2(\delta)^2\) yields
    \[
        \lim_{\delta\to \infty} h_{\varepsilon}(t_2(\delta))= \lim_{t_2\to 0}\frac{1-\frac{t_2(t_2+2t_2^2)}{(t_2+t_2^2)^2}\theta}{1+\frac{t_2(t_2+2t_2^2)}{(t_2+t_2^2)^2}\theta}=\lim_{t_2\to 0}\frac{1-\frac{(1+2t_2)}{(1+t_2)^2}\theta}{1+\frac{(1+2t_2)}{(1+t_2)^2}\theta}=\frac{1-\theta}{1+\theta}=\frac{1}{\lambda},
    \]
    which concludes the proof.
\end{proof}

\subsection{The sharpness of a weighted Hardy inequality}\label{sec:0:applications} In this section, we present a sharpness result concerning a weighted Hardy inequality on \(\mathcal{F}_{0,0,\lambda}\). As a consequence we also obtain Theorem~\ref{thm:intro:Hardy:sharp}. Our result can be stated as follows.
\begin{theorem}\label{thm:0:app:sharp}
    Let \(n\ge 3\), \(\lambda\in (1,\infty)\), and \(\alpha,p\in \mathbb{R}\) such that \(1<p<n+\alpha\). For every \((\mathbb{R}^n,F_\varepsilon,\mathsf{m}_\varepsilon)\in \mathcal{F}_{0,0,\lambda}\) and \(u\in C_0^\infty(\mathbb{R}^n)\) one has
    \begin{equation}\label{eq:0:app:Hardy}
        \lambda^p\displaystyle\int_{\mathbb{R}^n} \rho^\alpha \cdot F_\varepsilon^*(Du)^p\diff \mathsf{m}_\varepsilon\ge c_{n,p,\alpha}\int_{\mathbb{R}^n}\rho^\alpha\frac{u^p}{\rho_\varepsilon^p}\diff \mathsf{m}_\varepsilon,\quad\text{where}\quad c_{n,p,\alpha}=\left(\frac{n+\alpha-p}{p}\right)^p.
    \end{equation}
    Moreover, \(c_{n,p,\alpha}\) is \emph{sharp} in \(\mathcal{F}_{0,0,\lambda}\), in the sense that it is the greatest constant with this property.
\end{theorem}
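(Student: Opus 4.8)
The plan is to derive inequality~\eqref{eq:0:app:Hardy} from Theorem~\ref{thm:Hardy:genproof}, and the sharpness of $c_{n,p,\alpha}$ from Theorem~\ref{thm:0:sharp}.

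\emph{The inequality.} Fix $\varepsilon>0$ and consider $(\mathbb{R}^n,F_\varepsilon,\mathsf{m}_\varepsilon)\in\mathcal{F}_{0,0,\lambda}$. By Theorem~\ref{thm:0}\,(i) this is a non-compact FMMM with $\lambda_{F_\varepsilon}(\mathbb{R}^n)=\lambda<\infty$, and it is forward complete since the metric is projectively flat and defined on all of $\mathbb{R}^n$, so the geodesics are entire straight lines. Projective flatness also forces the geodesics issued from $O$ to be straight lines, hence $\nabla\rho_\varepsilon(x)$ is a positive multiple of $x$; combining this with the $0$-homogeneity of $\mathbf{K}_\varepsilon,\overline{\mathbf{S}}_\varepsilon$ and Theorem~\ref{thm:0}\,(iii) yields $\mathbf{K}_\varepsilon(x,\nabla\rho_\varepsilon(x))=\mathbf{K}_\varepsilon(x,x)\le 0$ and $\overline{\mathbf{S}}_\varepsilon(x,\nabla\rho_\varepsilon(x))=\overline{\mathbf{S}}_\varepsilon(x,x)\le 0$ for $x\ne O$, i.e.\ $\mathbf{K}_\varepsilon\le 0$ and $\overline{\mathbf{S}}_\varepsilon\le 0$ along $\nabla\rho_\varepsilon$. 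Since $n\ge 3$ and $1<p<n+\alpha$, Theorem~\ref{thm:Hardy:genproof} (with $x_0=O$, $\Omega=\mathbb{R}^n$) gives~\eqref{eq:0:app:Hardy} for all $u\in C_0^\infty(\mathbb{R}^n)$.

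\emph{The sharpness.} I apply Theorem~\ref{thm:0:sharp} with the Riccati pair taken from the proof of Theorem~\ref{thm:Hardy:genproof}: let $R=+\infty$ (so $\Omega_\varepsilon=\mathbb{R}^n$) and, on $(0,\infty)$,
\[
  w(t)=t^\alpha,\qquad L(t)=L^{0,0}(t)=\frac{n-1}{t},\qquad W(t)=c_{n,p,\alpha}\,t^{-p},\qquad G(t)=c_{n,p,\alpha}^{\,(p-1)/p}\,t^{-(p-1)};
\]
then $(L,W)$ is a $(p,\rho_\varepsilon,w)$-Riccati pair and $G$ solves~\eqref{eq:0:RP:odi} with equality. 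The function from~\eqref{eq:vdef:0} satisfies $-(\log v(t))'=G(t)^{1/(p-1)}=\tfrac{n+\alpha-p}{p}\cdot\tfrac1t$, so $v(t)=t^{-(n+\alpha-p)/p}$. The key observation is that $(p-1)p'=p$ forces $G(t)^{p'}=c_{n,p,\alpha}\,t^{-p}=W(t)$, whence $l_1(\delta)=\max_{t\in[t_2,t_3]}G(t)^{p'}/W(t)\equiv 1$ for every choice of $0<t_2<t_3$. It remains to arrange $l_0(\delta)\to 0$. One computes $(wWv^p)(t)=c_{n,p,\alpha}\,t^{-n}$, so $\mathcal{I}(wWv^p;t_2,t_3)=c_{n,p,\alpha}\log(t_3/t_2)$, while $\mathcal{I}(w;a,b)=\frac{b^{n+\alpha}-a^{n+\alpha}}{n+\alpha}$ (legitimate since $n+\alpha>p>0$) and $v(t)^p=t^{-(n+\alpha-p)}$. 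Choosing, for $\delta>2$, $t_1(\delta)\equiv 1$, $t_2(\delta)\equiv 2$, $t_3(\delta)=\delta$, $t_4(\delta)=2\delta$ — continuous in $\delta$, with limits $1,2,\infty,\infty$, and satisfying $0<t_1<t_2<t_3<t_4<R$ — a direct substitution gives $l_0(\delta)=\mathrm{const}/\log(\delta/2)\to 0$; the linear scaling $t_4-t_3=t_3$ is precisely what absorbs the contribution $v(t_3)^p\,\mathcal{I}(w;t_3,t_4)$, which is of order $t_3^{\,p}$. Hence Theorem~\ref{thm:0:sharp} applies: $c_{n,p,\alpha}$ cannot be replaced by any larger constant in~\eqref{eq:0:app:Hardy} on every member of $\mathcal{F}_{0,0,\lambda}$, which is the asserted sharpness. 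Finally, Theorem~\ref{thm:intro:Hardy:sharp} is the special case $p=2$, $\alpha=0$, for which $c_{n,2,0}=\left(\tfrac{n-2}{2}\right)^2=c_n$.

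The only genuinely delicate point is this calibration of $t_i(\delta)$. The identity $l_1\equiv 1$ is automatic once the Riccati ODI is solved with equality; the work is in $l_0$, where the two ``tent'' terms coming from linearizing $v$ on $[t_1,t_2]$ and $[t_3,t_4]$ must be made small relative to the logarithmically divergent core $\mathcal{I}(wWv^p;t_2,t_3)=c_{n,p,\alpha}\log(t_3/t_2)$ — in particular the outer gap $t_4-t_3$ has to grow proportionally to $t_3$ rather than staying bounded. Everything else reduces to the elementary one-dimensional integrations indicated above.
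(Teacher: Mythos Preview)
Your proof is correct and follows essentially the same route as the paper: the inequality is read off from Theorem~\ref{thm:Hardy:genproof}, and sharpness is obtained by feeding the same Riccati data \(w(t)=t^\alpha\), \(W(t)=c_{n,p,\alpha}t^{-p}\), \(G(t)=c_{n,p,\alpha}^{(p-1)/p}t^{-(p-1)}\) (whence \(G^{p'}/W\equiv 1\)) into Theorem~\ref{thm:0:sharp}. The only difference is the calibration of the \(t_i(\delta)\): the paper takes \(t_1=\delta/2\), \(t_2=\delta\), \(t_3=\delta^2\), \(t_4=2\delta^2\), while you take \(t_1=1\), \(t_2=2\), \(t_3=\delta\), \(t_4=2\delta\); in both cases the two tent terms are bounded and the core integral \(\mathcal{I}(wWv^p;t_2,t_3)\) diverges logarithmically, so \(l_0\to 0\).
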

\begin{proof}
    The proof of inequality~\eqref{eq:0:app:Hardy} follows from Theorem~\ref{thm:Hardy:genproof}. To prove the sharpness, we use Theorem~\ref{thm:0:sharp}. According to the relation~\eqref{eq:vdef:0}, we obtain
    \[v(t)^p=\frac{1}{t^{n+\alpha-p}},\quad\forall t\in(0,\infty).\]
    For every \(\delta>1\) define
    \[t_1(\delta)=\delta/2,\quad t_2(\delta)=\delta,\quad t_3(\delta)=\delta^2,\quad\mbox{and}\quad t_4(\delta)=2\delta^2.\]
    By simple computations we obtain
    \begin{align*}
        \frac{v(t_2)^p\cdot \mathcal{I}(w;t_1,t_2)}{(t_2-t_1)^p} & =\frac{2^{n+\alpha}-1}{(n+\alpha)2^{n+\alpha-p}}=\text{const},        \\
        \frac{v(t_3)^p\cdot \mathcal{I}(w;t_3,t_4)}{(t_4-t_3)^p} & =\frac{2^{n+\alpha}-1}{n+\alpha}=\text{const},                        \\
        \mathcal{I}(wWv^p;t_2,t_3)                               & =c_{n,p,\alpha}\log(\delta)\to\infty,\quad\text{as } \delta\to\infty.
    \end{align*}
    It follows that \(l_0\to 0\) as \(\delta\to\infty\). Since \(l_1=\frac{G(t)^{p'}}{W(t)}=1\), for every \(t>0\), thus, Theorem~\ref{thm:0:sharp} yields the sharpness of the inequality~\eqref{eq:0:app:Hardy}.
\end{proof}
We observe that choosing \(\alpha=0\) and \(p=2\) in the above result yields Theorem~\ref{thm:intro:Hardy:sharp}.

\section{Sharp Hardy inequalities -- strong negative curvature}\label{sec:k}

In this section, we present our sharpness results concerning Hardy-type inequalities, assuming strong negative curvature. We proceed similarly to the previous case. In Section~\ref{sec:k:model} we present a second construction of spaces (see Theorem~\ref{thm:k}). In Section~\ref{sec:k:conditions} we provide sufficient conditions for sharpness of Hardy-type inequalities on them (see Theorem~\ref{thm:k:sharp}). Finally, in Section~\ref{sec:k:applications} we present an application: In Theorem~\ref{thm:k:app:sharp} we prove a sharpness result concerning a McKean-type special gap estimate, which implies Theorem~\ref{thm:intro:McKean:sharp}.

\subsection{Construction}\label{sec:k:model} Consider the triple \((\mathbb{B}^n,F, \mathsf{m})\), where \(\mathbb{B}^n\) denotes the Euclidean (open) unit ball, the function \(F\colon \mathbb{B}^n\times \mathbb{R}^n\to [0,\infty)\) is defined by
    \begin{equation}\label{eq:k:F}
        F(x,y)=\frac{\sqrt{|y|^2-|x|^2|y|^2+\langle x,y\rangle^2}}{\kappa(1-\theta)(1-|x|^2)}-\frac{\theta\langle x,y\rangle}{\kappa(1-\theta) |x|(1-|x|^2)},
    \end{equation}
    for some \(\theta\in(0,1)\) and \(\kappa>0\); while \(\mathsf{m}\) is a measure with density
    \[
        \sigma= \exp(-(n-1)h \rho)\cdot  \sigma_{F},
    \]
    for some \(h\in \mathbb{R}\). As before, \(\rho\) is the distance from the origin and \(\sigma_F\) is the density of the Busemann--Hausdorff measure induced by \(F\). We observe that \(F\) is not a Finsler metric, since it is not continuous at the origin.
    However, for every \(x\ne O\), one has
    \[
        \mathbf{K}(x,x)=-\kappa^2,\quad \overline{\mathbf{S}}(x,x)=(n-1)h,\quad\mbox{and}\quad \mathbf{rev}(x,x)=\frac{1+\theta}{1-\theta}\defeq\lambda=\lambda_F(\mathbb{B}^n).
    \]

    The following result provides an approximation of the above structure by a family of Randers spaces.

    \begin{theorem}\label{thm:k} Let \(\lambda\in(1,\infty)\), \(\theta=\frac{\lambda-1}{\lambda+1}\),  \(\kappa>0\) and \(h\in \mathbb{R}\). Construct the family \(\mathcal{F}_{\kappa,h,\lambda}=\{(\mathbb{B}^n, F_\varepsilon,\mathsf{m}_\varepsilon)\}_{\varepsilon>0}\) of FMMMs, where \(F_\varepsilon\colon T\mathbb{B}^n\simeq\mathbb{B}^n\times\mathbb{R}^n\to [0,\infty)\) is defined by
        \[
            F_\varepsilon(x,y)=\frac{\sqrt{|y|^2-|x|^2|y|^2+\langle x,y\rangle^2}}{k_{\varepsilon}(1-\theta)(1-|x|^2)}
            -\frac{\theta \langle x,y\rangle}{k_{\varepsilon}(1-\theta) |x|(1-|x|^2)}\cdot \frac{\operatorname{arctanh}(|x|)\left(\operatorname{arctanh}(|x|)+2\varepsilon\right)}{\left(\operatorname{arctanh}(|x|)+\varepsilon\right)^2},
        \]
        \(k_{\varepsilon}\) is given by relation~\eqref{eq:k:ke},
        \(\mathsf{m}_\varepsilon\) is a measure with density \(\sigma_\varepsilon=\exp(-(n-1)h \rho_\varepsilon)\cdot \sigma_{F_\varepsilon}\), and \(\sigma_{F_\varepsilon}\) denotes the density of the Busemann--Hausdorff measure induced by \(F_\varepsilon\). The following statements hold:
        \begin{enumerate}[label=(\roman*)]
            \item For every \(\varepsilon>0\), the triple \((\mathbb{B}^n, F_\varepsilon,\mathsf{m}_\varepsilon)\) is a Finsler metric measure manifold. In addition, \(F_\varepsilon\) is a projectively flat Randers metric, with reversibility \(\lambda_{F_\varepsilon}(\mathbb{B}^n)=\lambda\).
            \item For every \(x\in \mathbb{B}^n\) with \(x\ne 0\), one has
                  \begin{equation}\label{eq:k:conditions:limit}
                      \mathbf{K}_\varepsilon(x,x)\to -\kappa^2,\quad \overline{\mathbf{S}}_\varepsilon(x,x)\to (n-1)h,\quad\mbox{and}\quad \mathbf{rev}_\varepsilon(x,x)\to\lambda,\quad\text{as}\quad  \varepsilon\to 0.
                  \end{equation}
            \item For every \(\varepsilon>0\) and \(x\in \mathbb{B}^n\), one has
                  \begin{equation}\label{eq:k:conditions:ineq}
                      \mathbf{K}_\varepsilon(x,x)\le -\kappa^2,\quad
                      \overline{\mathbf{S}}_\varepsilon(x,x)\le (n-1)h,\quad\mbox{and}\quad
                      \mathbf{rev}_\varepsilon(x,x)\le\lambda.
                  \end{equation}
        \end{enumerate}
    \end{theorem}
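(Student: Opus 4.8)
The plan is to follow, step by step, the blueprint of the proof of Theorem~\ref{thm:0}, with the Euclidean background there replaced here by a constant rescaling of the Klein (Beltrami--Klein) metric. First I would put $F_\varepsilon$ into the Randers form $F_\varepsilon(x,y)=\sqrt{a_{ij}(x)y^iy^j}+b_i(x)y^i$ of~\eqref{eq:randers:def}, reading off
\[
a_{ij}(x)=\frac{(1-|x|^2)\delta_{ij}+x^ix^j}{k_\varepsilon^2(1-\theta)^2(1-|x|^2)^2},\qquad
b_i(x)=-\frac{\theta\,x^i}{k_\varepsilon(1-\theta)\,|x|(1-|x|^2)}\,m_\varepsilon(|x|),
\]
where $m_\varepsilon(r)=\dfrac{s(s+2\varepsilon)}{(s+\varepsilon)^2}$ and $s=\operatorname{arctanh}(r)$. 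The matrix $(1-|x|^2)\delta_{ij}+x^ix^j$ has $x$ as eigenvector with eigenvalue $1$ and $x^{\perp}$ as eigenspace with eigenvalue $1-|x|^2$, so its inverse is explicit; since $b_i(x)$ is proportional to $x_i$, this gives at once $\|b(x)\|_a=\theta\,m_\varepsilon(|x|)$, and because $s\mapsto m_\varepsilon=1-\varepsilon^2/(s+\varepsilon)^2$ is increasing with $\sup_{s>0}m_\varepsilon=1$, we obtain $\|b(x)\|_a<\theta<1$ on $\mathbb{B}^n$. Hence $F_\varepsilon$ is a genuine Randers metric (smooth on $T\mathbb{B}^n$ away from the zero section, in the same sense as in Theorem~\ref{thm:0}), which is half of~(i).

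For projective flatness I would invoke Remark~\ref{rem:Randers:flat}: the Klein metric---hence its constant multiple $\sqrt{a_{ij}(x)y^iy^j}$---is projectively flat (its geodesics are Euclidean segments), and $b_i\diff x^i=D\beta$ with $\beta(x)=-\tfrac{\theta}{k_\varepsilon(1-\theta)}\bigl(s+\tfrac{\varepsilon^2}{s+\varepsilon}\bigr)$, $s=\operatorname{arctanh}(|x|)$, because $\partial_{x^i}s=\dfrac{x^i}{|x|(1-|x|^2)}$ and $\bigl(s+\tfrac{\varepsilon^2}{s+\varepsilon}\bigr)'=m_\varepsilon$. Thus $F_\varepsilon$ is projectively flat, the geodesics through $O$ are the radial rays, and integrating $F_\varepsilon$ along $\tau\mapsto\tau x/|x|$ yields---exactly as in Theorem~\ref{thm:0}, now with $|x|$ replaced by $s=\operatorname{arctanh}|x|$---
\[
\rho_\varepsilon(x)=\frac{s}{k_\varepsilon}\cdot\frac{(1-\theta)s+\varepsilon}{(1-\theta)(s+\varepsilon)},\qquad s=\operatorname{arctanh}(|x|),
\]
from which $D\rho_\varepsilon$ and $\nabla\rho_\varepsilon$ follow by the chain rule (both parallel to $x$). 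The Busemann--Hausdorff density is then read off from~\eqref{eq:Randers:measure} using $\det\bigl[(1-|x|^2)\delta_{ij}+x^ix^j\bigr]=(1-|x|^2)^{n-1}$, and $\sigma_\varepsilon=\exp(-(n-1)h\rho_\varepsilon)\cdot\sigma_{F_\varepsilon}$.

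Since $x\parallel\nabla\rho_\varepsilon$ and $\mathbf{K}_\varepsilon,\overline{\mathbf{S}}_\varepsilon,\mathbf{rev}_\varepsilon$ are $0$-homogeneous in the direction, it suffices to evaluate all three quantities at $y=x$. The reversibility is immediate from the Randers structure: $\mathbf{rev}_\varepsilon(x,x)=\dfrac{1+\|b(x)\|_a}{1-\|b(x)\|_a}=\dfrac{1+\theta m_\varepsilon(|x|)}{1-\theta m_\varepsilon(|x|)}\le\lambda$, with equality in the limit $|x|\to1$; moreover $a^{ij}(x)b_j(x)$ is parallel to $x$ (same eigenstructure), so $\mathbf{rev}_\varepsilon(x,y)\le\mathbf{rev}_\varepsilon(x,x)$ for every $y$, whence $\lambda_{F_\varepsilon}(\mathbb{B}^n)=\sup_x\mathbf{rev}_\varepsilon(x,x)=\lambda$, completing~(i). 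For $\mathbf{K}_\varepsilon$ and $\overline{\mathbf{S}}_\varepsilon$ I would feed the Randers data into Proposition~\ref{prop:flat:computations}: form $P(x,y)=\tfrac{1}{2F_\varepsilon}\partial_{x^k}F_\varepsilon\,y^k$ and $G^i=Py^i$, compute $P^2-\partial_{x^i}P\,y^i$ and $\partial_{y^i}G^i$, subtract the measure term $\tfrac{y^i}{\sigma_\varepsilon}\partial_{x^i}\sigma_\varepsilon$, and simplify at $y=x$; the factor $\exp(-(n-1)h\rho_\varepsilon)$ contributes exactly $(n-1)h$ to $\overline{\mathbf{S}}_\varepsilon$ along $\nabla\rho_\varepsilon$ (cf.\ Remark~\ref{rem:0}/\ref{rem:0:b}), the remainder coming from the Busemann--Hausdorff part and the one-form. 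The expected outcome is
\[
\mathbf{K}_\varepsilon(x,x)=-\kappa^2-E_\varepsilon(s),\qquad \overline{\mathbf{S}}_\varepsilon(x,x)=(n-1)h-\widetilde E_\varepsilon(s),
\]
with $E_\varepsilon,\widetilde E_\varepsilon\ge0$ rational in $s=\operatorname{arctanh}|x|$ and $E_\varepsilon(s),\widetilde E_\varepsilon(s)\to0$ as $\varepsilon\to0$ for each fixed $s>0$; the normalization $k_\varepsilon$ of~\eqref{eq:k:ke} is precisely the constant that pins the $\varepsilon$-independent part of $\mathbf{K}_\varepsilon(x,x)$ to $-\kappa^2$ while keeping the remainder signed (so that $k_\varepsilon\to\kappa$ as $\varepsilon\to0$). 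Letting $\varepsilon\to0$ then gives~\eqref{eq:k:conditions:limit}, and the non-negativity of $E_\varepsilon,\widetilde E_\varepsilon$ together with $m_\varepsilon<1$ gives~\eqref{eq:k:conditions:ineq}.

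The main obstacle is this last step. The flag- and $S$-curvature computations through Proposition~\ref{prop:flat:computations} are markedly heavier than in the flat case of Theorem~\ref{thm:0}, because the square-root part already carries the curvature $-\kappa^2$, so one must simultaneously track the Klein background, the Randers one-form, and the $\varepsilon$-smoothing factor $m_\varepsilon$, and then reorganize a fairly large rational expression in $s=\operatorname{arctanh}|x|$ into a manifestly non-positive correction. Identifying the exact formula~\eqref{eq:k:ke} for $k_\varepsilon$---the normalization that both fixes the leading term to $-\kappa^2$ and leaves the correction with the correct sign for every $\varepsilon>0$---is the only genuinely new bookkeeping point; everything else is a direct transcription of the argument already carried out for $\mathcal{F}_{0,0,\lambda}$.
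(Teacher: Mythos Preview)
Your outline mirrors the paper's proof almost exactly: Randers decomposition, computation of $\|b\|_a=\theta\,m_\varepsilon$, projective flatness via Remark~\ref{rem:Randers:flat} with the same primitive $\beta$, the formula for $\rho_\varepsilon$ in terms of $s=\operatorname{arctanh}|x|$, and the evaluation of $\mathbf{K}_\varepsilon,\overline{\mathbf{S}}_\varepsilon,\mathbf{rev}_\varepsilon$ at $(x,x)$ through Proposition~\ref{prop:flat:computations}. The treatment of $\overline{\mathbf{S}}_\varepsilon$ and $\mathbf{rev}_\varepsilon$ is fine and matches the paper.

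The one place where your mental model diverges from what actually happens is the role of $k_\varepsilon$. You describe it as ``the constant that pins the $\varepsilon$-independent part of $\mathbf{K}_\varepsilon(x,x)$ to $-\kappa^2$ while keeping the remainder signed'', anticipating a clean decomposition $\mathbf{K}_\varepsilon(x,x)=-\kappa^2-E_\varepsilon(s)$ with $E_\varepsilon\ge0$. That is not how the computation comes out. The flag curvature factors as $\mathbf{K}_\varepsilon(x,x)=-k_\varepsilon^2\cdot g(\bar r,\varepsilon,\theta)$ with $g>0$ independent of $k_\varepsilon$, and the crucial point (which has no analogue in Theorem~\ref{thm:0}) is that $\bar r\mapsto g(\bar r,\varepsilon,\theta)$ is \emph{not monotone}: it tends to $1$ as $\bar r\to\infty$, blows up as $\bar r\to0$, and for small $\varepsilon$ has an interior minimum $K_\varepsilon<1$ located via a derivative test. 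Consequently $k_\varepsilon$ is not a mere normalization of a leading term; it is defined so that $k_\varepsilon^2\cdot\min_{\bar r}g=\kappa^2$, which forces $k_\varepsilon=\kappa/\sqrt{K_\varepsilon}>\kappa$ (with the $\max$ in~\eqref{eq:k:ke} covering the regime where the interior minimum is absent). Only after this choice does one get $\mathbf{K}_\varepsilon(x,x)\le-\kappa^2$ everywhere, and $k_\varepsilon\to\kappa$ follows from $K_\varepsilon\to1$. This non-monotonicity (cf.\ Remark~\ref{rem:k}\ref{rem:k:a}) is the genuine new feature relative to the flat case, and your proposal underestimates it by treating it as bookkeeping rather than as the step that dictates the very definition of $k_\varepsilon$.
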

    \begin{proof}

        For simplicity of presentation, denote \(r=|x|\), \(\overline{r}=\operatorname{arctanh}(|x|)\), and \(\vartheta_\varepsilon=k_\varepsilon(1-\theta)\). Using the standard notation of Randers metrics (see Section~\ref{sec:prelim:randers}) one has
        \[
            F_\varepsilon(x,y)=\sqrt{a_{ij}(x)y^iy^j}+b_i(x)y^i,
        \]
        where
        \[
            a_{ij}(x)=\frac{\delta_{ij}}{\vartheta_{\varepsilon}^2(1-r^2)}+\frac{x^ix^j}{\vartheta_{\varepsilon}^2(1-r^2)^2}
            \quad\mbox{and}\quad
            b_i(x)=-\frac{\theta \overline{r}(\overline{r}+2\varepsilon)x^i}{\vartheta_{\varepsilon} r(1-r^2)(\overline{r}+\varepsilon)^2}.
        \]
        Let \(a^{ij}(x)=\{a_{ij}(x)\}^{-1}\), it follows that
        \[
            \|b\|_a(x)=\sqrt{a^{ij}(x)b_i(x)b_j(x)}=\frac{\theta \overline{r}(\overline{r}+2\varepsilon)}{(\overline{r}+\varepsilon)^2}<\theta<1.
        \]
        Since \(F_\varepsilon\) is also smooth, it is a Randers metric on \(\mathbb{R}^n\). According to Remark~\ref{rem:Randers:flat}, the metric \(F_\varepsilon\) is projectively flat. Indeed, the Klein metric (obtained from \(F_\varepsilon\) for \(\theta=0\)) is projectively flat, and
        \[
            b_i\diff x^i=D\beta(x),\quad\mbox{where}\quad
            \beta(x)=-\frac{\theta}{\vartheta_{\varepsilon}}\left(\overline{r}+\frac{\varepsilon^2}{\overline{r}+\varepsilon}\right).
        \]

        Let \(O\in\mathbb{R}^n\) denote the origin, \(\rho_\varepsilon(x)=d(O,x)\) and \(\overleftarrow{\rho_\varepsilon} (x)=d(x,O)\). Then
        \[
            \rho_\varepsilon(x)=\overline{r}\cdot \frac{\overline{r}(1-\theta)+ \varepsilon }{\vartheta_{\varepsilon}(\overline{r}+\varepsilon)}
            \quad\mbox{and}\quad \overleftarrow{\rho_\varepsilon} (x) =\overline{r}\cdot \frac{\overline{r}(1+\theta)+ \varepsilon }{\vartheta_{\varepsilon}(\overline{r}+\varepsilon)}.
        \]
        In addition, one has
        \[
            D\rho_\varepsilon(x)=\left(\frac{(\overline{r}+\varepsilon)^2-\overline{r}(\overline{r}+2\varepsilon)\theta}{\vartheta_{\varepsilon}r(1-r^2)(\overline{r}+\varepsilon )^2}\right)\cdot x
            \quad\mbox{and}\quad
            \nabla\rho_\varepsilon(x)=\frac{\vartheta_{\varepsilon}(1-r^2)(\overline{r}+\varepsilon)^2}{r((\overline{r}+\varepsilon)^2-\overline{r}(\overline{r}+2\varepsilon)\theta)}\cdot x.
        \]
        According to relation~\eqref{eq:Randers:measure}, the density of the Busemann--Hausdorff measure induced by \(F_\varepsilon\) is
        \[
            \sigma_{F_\varepsilon}=
            \frac{1}{\vartheta_{\varepsilon}^n}\cdot\left(\frac{1}{1-r^2}\right)^\frac{n+1}{2}\cdot\left(1-\frac{\overline{r}^2 (\overline{r}+2 \varepsilon)^2\theta^2}{(\overline{r}+\varepsilon)^4}\right)^\frac{n+1}{2}
            \defeq  \frac{1}{\vartheta_{\varepsilon}^n}\cdot
            \left(\frac{1}{1-\overline{r}^2}\right)^\frac{n+1}{2}
            \cdot f_\varepsilon^\frac{n+1}{2}.
        \]
        Observe that \(1-\theta^2\le f_\varepsilon\le 1\).
        The density of \(\mathsf{m}_\varepsilon\) is
        \[
            \sigma_\varepsilon=   \exp\left(-(n-1)h\rho_\varepsilon\right)\cdot \frac{1}{\vartheta_{\varepsilon}^n}\left(\frac{1}{1-\overline{r}^2}\right)^\frac{n+1}{2}f_\varepsilon^\frac{n+1}{2}.
        \]

        In the sequel, we compute the flag curvature, the reduced \(S\)-curvature, and the reversibility of \((\mathbb{B}^n,F_\varepsilon,\mathsf{m}_\varepsilon)\) along the geodesics starting from \(O\). Recall that \(\vartheta_\varepsilon=k_\varepsilon(1-\theta)\). Concerning the flag curvature, we obtain
        \begin{align*}
            \mathbf{K}_{\rho_\varepsilon}(\overline{r})\defeq\mathbf{K}_\varepsilon(x,x)=
            -\frac{k_{\varepsilon}^2(1-\theta)^2(\overline{r}+\varepsilon )^4
                \left(e_0-e_1\theta +e_2\theta^2
                \right)}{\left((\overline{r}+\varepsilon )^2-\overline{r} (\overline{r}+2 \varepsilon )\theta \right)^4},
        \end{align*}
        where \(e_0=(\overline{r}+\varepsilon)^4\), \(e_1=2\overline{r}^4+8\overline{r}^3\varepsilon+(10\overline{r}^2-3)\varepsilon^2+4\overline{r}\varepsilon^3\), and \(e_2=\overline{r}^4+4\overline{r}^3\varepsilon+(4\overline{r}^2-3)\varepsilon^2\).

        Our goal is to choose \(k_{\varepsilon}\) such that relations~\eqref{eq:k:conditions:limit}~\&~\eqref{eq:k:conditions:ineq} hold.
        For every \(\varepsilon>0\), one has
        \[
            \lim_{\overline{r}\to\infty} \mathbf{K}_{\rho_\varepsilon}(\overline{r})  =-k_{\varepsilon}^2,\quad\text{and}\quad
            \lim_{\overline{r}\to0} \mathbf{K}_{\rho_\varepsilon}(\overline{r})       =-k_{\varepsilon}^2\cdot K_{\varepsilon,0},\quad\text{where}\quad K_{\varepsilon,0}=(1-\theta)^2\left(1+\frac{3\theta(1-\theta)}{\varepsilon^2}\right).
        \]
        By a simple derivative test, we obtain that if \(\varepsilon\le \sqrt{\frac{3(1-\theta)}{8\theta}}\defeq \varepsilon_0\), then \(\overline{r}\mapsto\mathbf{K}_{\rho_\varepsilon}(\overline{r})\) admits a unique maximum on \((0,\infty)\), which is given by
        \[
            -k_{\varepsilon}^2\cdot K_{\varepsilon}, \quad\text{where}\quad K_{\varepsilon}=\frac{6(1-\theta)^2\left((1-\theta ) \left(3 (1-\theta )-2 \varepsilon ^2 \theta \right)+\sqrt{3 (1-\theta)^3 \left(3(1-\theta)-8 \varepsilon ^2 \theta \right)}\right)^3}{\left(3 (1-\theta )^2+\sqrt{3 (1-\theta)^3 \left(3(1-\theta)-8 \varepsilon ^2 \theta \right)}\right)^4}\le 1.
        \]
        For simplicity define  \(K_{\varepsilon}=\infty\), when \(\varepsilon>\varepsilon_0\). To ensure \(\mathbf{K}_{\rho_\varepsilon}(\overline{r})\le-\kappa^2\), \(\forall \varepsilon>0,\overline{r}\ge 0\), choose
        \begin{equation}\label{eq:k:ke}
            k_{\varepsilon}=\max\left\{\kappa,\frac{\kappa}{\sqrt{K_{\varepsilon,0}}},\frac{\kappa}{\sqrt{K_{\varepsilon}}}\right\}.
        \end{equation}
        If \(\varepsilon\to 0\), then \(K_{\varepsilon,0}\to\infty\), \(K_\varepsilon\to1\), and \(k_{\varepsilon}\to\kappa\). Thus, for every \(\overline{r}>0\) we obtain \(\mathbf{K}_{\rho_\varepsilon}(\overline{r})\to-\kappa^2\) as \(\varepsilon\to 0\).

        The reduced \(S\)-curvature along \(\rho_\varepsilon\) can be computed as
        \begin{align*}
            \overline{\mathbf{S}}_{\rho_\varepsilon}(\overline{r})\defeq\overline{\mathbf{S}}(x,x)
             & = (n-1)h-\frac{\vartheta_{\varepsilon}(n+1)(\overline{r}+\varepsilon)\theta \varepsilon^2}{(\overline{r}+\varepsilon)^4-\overline{r}^2(\overline{r}+2\varepsilon)^2\theta^2}\le (n-1)h.
        \end{align*}
        We easily obtain that if \(\overline{r}>0\), then \(\overline{\mathbf{S}}_{\rho_\varepsilon}(\overline{r})\to (n-1)h\) as \(\varepsilon\to 0\). If \(\overline{r}=0\), then \(\overline{\mathbf{S}}_{\rho_\varepsilon}(\overline{r})\to -\infty\) as \(\varepsilon\to 0\). In addition, for every fixed \(\varepsilon>0\) one has \(\overline{\mathbf{S}}_{\rho_\varepsilon}(\overline{r})\to (n-1)h\) as \(\overline{r}\to \infty\).

        Finally, for the reversibility we obtain
        \[
            \mathbf{rev}_{\rho_\varepsilon}(\overline{r})\defeq\mathbf{rev}(x,x)=\frac{1+\frac{\overline{r}(\overline{r}+2\varepsilon)\theta}{(\overline{r}+\varepsilon)^2}}{1-\frac{\overline{r}(\overline{r}+2\varepsilon)\theta}{(\overline{r}+\varepsilon)^2}}\le \frac{1+\theta}{1-\theta}=\lambda.
        \]
        It follows that, if \(\overline{r}>0\), then \(\mathbf{rev}_{\rho_\varepsilon}(\overline{r})\to\lambda\) as \(\varepsilon\to 0\), and for every fixed \(\varepsilon>0\) one has \(\mathbf{rev}_{\rho_\varepsilon}(\overline{r})\to \lambda\) as \(\overline{r}\to \infty\), and
        \(\mathbf{rev}_{\rho_\varepsilon}(\overline{r})\to 1\) as \(\overline{r}\to 0\). The latter limit ensures the continuity of \(F_\varepsilon\) at the origin.
        In addition, \(\lambda_{F_\varepsilon}(\mathbb{B}^n)=\lambda\), for every \(\varepsilon>0\). Indeed, for every \(x\in \mathbb{B}^n\), and \(y\in \mathbb{R}^n\) one has \[\mathbf{rev}_\varepsilon(x,y)=\frac{F_\varepsilon(x,-y)}{F_\varepsilon(x,y)}\le \frac{F_\varepsilon(x,-x)}{F_\varepsilon(x,x)}=\mathbf{rev}_\varepsilon(x,x)\le \lambda,\quad \text{if }x\ne O, \quad\text{and}\quad \mathbf{rev}_\varepsilon(O,y)=1,\]
        which concludes the proof.
    \end{proof}
    We make the following observations.
    \begin{remark}\phantomsection\label{rem:k}
        \begin{enumerate}[label=\textup(\alph*),wide,labelindent=0cm,itemsep=1mm]
            \item\label{rem:k:a} Although there exist various formal similarities between Theorem~\ref{thm:0} and Theorem~\ref{thm:k}, we highlight a main difference, which is the monotonicity of \({r}\mapsto \mathbf{K}_{\rho_\varepsilon}({r})\). In case of non-positive curvature this function is increasing, hence its limit as \({r}\to\infty\) is its upper bound. In case of strong negative curvature, for sufficiently small \(\varepsilon\), it increases to a global maximum \(-k^2_{\varepsilon,\vartheta}\cdot K_\varepsilon\) then decreases (in the proof, that maximum was adjusted to ensure the required upper bound).

                  It is an interesting question, if there exists a perturbation of the metric~\eqref{eq:k:F} (or some other construction), in which \(\mathbf{K}\le-\kappa^2\) along \(\nabla\rho_\varepsilon\) with a strictly increasing flag curvature along \(\nabla\rho_\varepsilon\), or if this phenomena is directly implied by the constraints on the geometry. We note that on each alternative prototypes of families of spaces that were examined during the research, the same phenomena occurred.

            \item As we anticipated in Remark~\ref{rem:0}/\ref{rem:0:b}, the construction of the measure \(\mathsf{m}_\varepsilon\) allows for non-zero upper bound of the reduced \(S\)-curvature, which is particularly important for our applications.

            \item\label{rem:k:c} Since \(K_{\varepsilon,0}\to\infty\) as \(\varepsilon\to 0\), there exists \(\varepsilon_1\in(0,\varepsilon_0)\) such that \(k_\varepsilon=\frac{\kappa}{\sqrt{K_\varepsilon}}\), for every \(\varepsilon\in(0,\varepsilon_1)\). Moreover, since \(k_\varepsilon\to\kappa\) from \emph{above}, there exist \(\overline \varepsilon\in (0,\varepsilon_1)\) and \(\overline \kappa>\kappa\), such that
                  \begin{equation}\label{eq:k:kEstimate}
                      \kappa \le k_{\varepsilon}\le \overline \kappa,\quad\forall \varepsilon\in(0,\overline\varepsilon).
                  \end{equation}

            \item  \label{rem:k:d}Similarly to Remark~\ref{rem:0}/\ref{rem:0:d}, we have \(\max\{F^{*}_\varepsilon(\pm D\rho_\varepsilon)\}=F^{*}_\varepsilon(D\rho_\varepsilon)=1\), since
                  \begin{equation}\label{eq:k:he}
                      F^*_\varepsilon(-D\rho_\varepsilon)=\frac{1-\frac{\overline{r}(\overline{r}+2\varepsilon)}{(\overline{r}+\varepsilon)^2}\theta}{1+\frac{\overline{r}(\overline{r}+2\varepsilon)}{(\overline{r}+\varepsilon)^2}\theta}\defeq h_\varepsilon(\overline{r}),\quad\mbox{where}\quad \frac{1-\theta}{1+\theta}\le h_\varepsilon(\overline{r})\le1.
                  \end{equation}
        \end{enumerate}
    \end{remark}

    \subsection{Sufficient conditions for sharpness}\label{sec:k:conditions}
    In this section, we present sufficient conditions for the sharpness of Hardy inequalities on the spaces of \(\mathcal{F}_{\kappa,h,\lambda}\), introduced in Theorem~\ref{thm:k}.

    Let \((\mathbb{B}^n,F_\varepsilon,\mathsf{m}_\varepsilon)\subset \mathcal{F}_{\kappa,h,\lambda}\) for some \(\varepsilon>0\), \(\lambda\in(1,\infty)\), \(\kappa>0\) and \(h\in \mathbb{R}\). Then for every  \(x\in \mathbb{B}^n\), one has
    \[
        \mathbf{K}_{\varepsilon}(x,x)\le-\kappa^2,\quad \overline{\mathbf{S}}_{\varepsilon}(x,x)\le (n-1)h,\quad\text{and}\quad \mathbf{rev}_{\varepsilon}(x,x)\le \lambda=\lambda_{F_\varepsilon}(\mathbb{B}^n),
    \]
    Denote by \(\rho_\varepsilon\) the distance from the origin.
    By the Laplace comparison theorem (see Theorem~\ref{thm:comp}) one has
    \[
        \Delta_{F_\varepsilon} (\rho_\varepsilon)\ge (n-1)\kappa\coth(\kappa\rho_\varepsilon)-(n-1)h\defeq L^{\kappa,h}(\rho_\varepsilon).
    \]
    Let \(R\in(0,+\infty]\), \(L(t)\le L^{\kappa,h}(t)\) for every \(t\in(0,R)\), and define \(\)
\[
    \Omega_\varepsilon=\{x\in \mathbb{B}^n\mid \rho_\varepsilon(x)<R\}.
\]

Suppose that \((L,W)\) is a \emph{\((p,\rho_\varepsilon,w)\)-Riccati pair} in \((0,R)\), that is \(W\colon(0,R)\to (0,\infty)\) is continuous, \(w\colon(0,R)\to (0,\infty)\) is of class \(C^1\), and there exists \(G\colon(0,R)\to(0,\infty)\) of class \(C^1\) such that
\begin{equation}\label{eq:k:RiccatiODI}
    (G(t)w(t))'+G(t)w(t)L(t)-(p-1)G(t)^{p'}w(t)\geq  W(t)w(t), \quad\forall t\in (0,R).
\end{equation}

By Theorem~\ref{thm:RP}, for every \(u\in C_0^\infty(\Omega_\varepsilon)\) one has
\begin{align}
    \lambda^p\int_{\Omega_\varepsilon} w(\rho_\varepsilon) F^*_\varepsilon(Du)^p\diff \mathsf{m}_\varepsilon
     & \ge\int_{\Omega_\varepsilon} w(\rho_\varepsilon) W(\rho_\varepsilon) |u|^p\diff \mathsf{m}_\varepsilon. \label{eq:k:RP:int:lam}
\end{align}
Suppose that \(G>0\) satisfies the Riccati ODI~\eqref{eq:k:RiccatiODI} with equality, and define \(v\colon (0,R)\to\mathbb{R}\) by
\begin{equation}\label{eq:k:vdef}
    -(\log v(t))'=-\frac{v'(t)}{v(t)}=G(t)^\frac{1}{p-1}.
\end{equation}
Since \(G\) is smooth and positive, \(v\) is smooth, positive, and decreasing. Introduce the notation
\begin{equation}\label{eq:k:I}
    \mathcal{I}^{k,h}(f;a,b)=\int_a^b f(t)\sinh(k t)^{n-1}e^{-(n-1)h t}\diff t,
\end{equation}
where \(k,h\in  \mathbb{R}\) and \(f\colon[a,b]\to\mathbb{R}\) is a continuous function, with \(0<a<b<R\). We have the following result.

\begin{theorem}\label{thm:k:sharp}
    Fix \(\delta_0>0\) and let \(t_1=t_1(\delta),t_2=t_2(\delta),t_3=t_3(\delta),t_4=t_4(\delta)\) be continuous functions admitting a (possibly infinite) limit at infinity, such that for every \(\delta>\delta_0\) one has \(0<t_1<t_2<t_3<t_4<R\). Let  \(\varepsilon=\varepsilon(\delta)\) such that
    \(\varepsilon\to 0\), as \(\delta\to\infty\), and suppose that the following limits hold true:
    \begin{align*}
        \lim_{\delta\to\infty} l_0^{k_\varepsilon,h}(\delta) & =0,\quad\mbox{where}\quad l_0^{k_\varepsilon,h}(\delta)\defeq \frac{v(t_2)^p\cdot \mathcal{I}^{k_\varepsilon,h}(w;t_1,t_2)}{(t_2-t_1)^p\cdot \mathcal{I}^{k_\varepsilon,h}(wW v^p;t_2,t_3)}+\frac{v(t_3)^p\cdot \mathcal{I}^{k_\varepsilon,h}(w;t_3,t_4)}{(t_4-t_3)^p\cdot \mathcal{I}^{k_\varepsilon,h}(wW v^p;t_2,t_3)}, \\
        \lim_{\delta\to\infty} l_1^{k_\varepsilon,h}(\delta) & =1,\quad\mbox{where}\quad l_1^{k_\varepsilon,h}(\delta)\defeq\max_{t\in [t_2,t_3]} \frac{G(t)^{p'}}{W (t)},\\
        \lim_{\delta\to\infty}l_2^{k_\varepsilon,h}(\delta) &=\frac{1}{\lambda}\quad\text{where}\quad l_2^{k_\varepsilon,h}(\delta)\defeq h_\varepsilon((1-\theta)\kappa t_2),\quad\textup{(see relation~\eqref{eq:k:he})}.
    \end{align*}
    Then inequality~\eqref{eq:k:RP:int:lam} is sharp on \(\mathcal{F}_{\kappa,h,\lambda}\) in the sense that it no longer holds on each FMMM of \(\mathcal{F}_{\kappa,h,\lambda}\) if we multiply its right hand side by a constant \(c>1\).
\end{theorem}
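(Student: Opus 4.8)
The proof follows the blueprint of Theorem~\ref{thm:0:sharp}; the only genuinely new ingredient is the handling of the hyperbolic weight $\sinh(k_\varepsilon t)^{n-1}$ coming from the Klein-type geometry, together with the fact that here the perturbation parameter $\varepsilon=\varepsilon(\delta)$ is prescribed in advance rather than chosen at the end.

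\emph{Setup.} Fix $\delta>\delta_0$ and abbreviate $\varepsilon=\varepsilon(\delta)$, $t_i=t_i(\delta)$, $T=(t_1,t_2,t_3,t_4)$. As in the proof of Theorem~\ref{thm:0:sharp}, let $u^\star_\varepsilon=v(\rho_\varepsilon)$ be the limit function (with $v$ as in~\eqref{eq:k:vdef}) and let $u^\star_{\varepsilon,T}=v_T(\rho_\varepsilon)$ be its ``tent-like'' truncation, where $v_T,v_T'$ are given exactly by~\eqref{eq:0:trunc} with $R$ the right endpoint here. Since $v_T$ is Lipschitz with support in $[t_1,t_4]\subset(0,R)$, the function $u^\star_{\varepsilon,T}$ is a compactly supported Lipschitz function on $\Omega_\varepsilon$, hence (by a standard density argument) admissible in~\eqref{eq:k:RP:int:lam}. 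Write
\[
  I\defeq\frac{1}{n\omega}\int_{\Omega_\varepsilon} w(\rho_\varepsilon)F^*_\varepsilon(Du^\star_{\varepsilon,T})^p\diff\mathsf{m}_\varepsilon,\qquad
  J\defeq\frac{1}{n\omega}\int_{\Omega_\varepsilon} w(\rho_\varepsilon)W(\rho_\varepsilon)(u^\star_{\varepsilon,T})^p\diff\mathsf{m}_\varepsilon,
\]
with $\omega$ the volume of the Euclidean unit ball. The aim is $\limsup_{\delta\to\infty}\lambda^pI/J\le 1$; combined with $\lambda^pI/J\ge 1$ (which is~\eqref{eq:k:RP:int:lam} tested against $u^\star_{\varepsilon,T}$), this gives $\lambda^pI/J\to 1$, and therefore for any $c>1$ the inequality~\eqref{eq:k:RP:int:lam} with right-hand side multiplied by $c$ fails on the space with parameter $\varepsilon(\delta)$ once $\delta$ is large, which is the asserted sharpness.

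\emph{Change of variables and density estimate.} Set $r=|x|$, $\overline r=\operatorname{arctanh}r$, so $r=\tanh\overline r$ and $(1-r^2)^{-1}=\cosh^2\overline r$. Inserting these into the density $\sigma_\varepsilon$ computed in the proof of Theorem~\ref{thm:k}, the powers of $\cosh$ cancel (since $(n+1)-(n-1)-2=0$) and the radial measure becomes
\[
  \sigma_\varepsilon(r)\,r^{n-1}\diff r=e^{-(n-1)h\rho_\varepsilon}\,\vartheta_\varepsilon^{-n}\,f_\varepsilon^{(n+1)/2}\,\sinh^{n-1}(\overline r)\diff\overline r,
\]
where $\vartheta_\varepsilon=k_\varepsilon(1-\theta)$ and $f_\varepsilon\in[1-\theta^2,1]$ is as in that proof. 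Passing to $t=\rho_\varepsilon$ and letting $\overline r_\varepsilon(t)$ denote the inverse of the strictly increasing map $\overline r\mapsto\overline r\,(\overline r(1-\theta)+\varepsilon)\,\vartheta_\varepsilon^{-1}(\overline r+\varepsilon)^{-1}$, we get $\sigma_\varepsilon r^{n-1}\diff r=\psi_\varepsilon(t)\diff t$ with $\psi_\varepsilon(t)=e^{-(n-1)ht}\vartheta_\varepsilon^{-n}f_\varepsilon^{(n+1)/2}\sinh^{n-1}(\overline r_\varepsilon(t))\,\overline r_\varepsilon'(t)$. Elementary manipulations of the defining relation give $k_\varepsilon(1-\theta)\,t\le\overline r_\varepsilon(t)\le k_\varepsilon t$ and $\overline r_\varepsilon'(t)\le k_\varepsilon$ for all $t$; together with $\kappa\le k_\varepsilon\le\overline\kappa$ (Remark~\ref{rem:k}/\ref{rem:k:c}) this yields the $\varepsilon$-uniform one-sided bound $\psi_\varepsilon(t)\le C^+e^{-(n-1)ht}\sinh^{n-1}(k_\varepsilon t)$. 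For the matching \emph{lower} bound one needs a sharper fact: solving the quadratic shows $\overline r_\varepsilon(t)=k_\varepsilon t+O(\varepsilon)$ uniformly on $t\ge\varepsilon/\vartheta_\varepsilon$, and the third hypothesis $l_2^{k_\varepsilon,h}(\delta)=h_\varepsilon((1-\theta)\kappa t_2)\to1/\lambda$ is precisely equivalent to $\varepsilon(\delta)/t_2(\delta)\to0$ (since $h_\varepsilon(\overline r)\to1/\lambda$ iff $\overline r/\varepsilon\to\infty$). Using $|\log(\sinh a/\sinh b)|\le|a-b|\coth(\min(a,b))$ and $\overline r_\varepsilon(t)\ge\vartheta_\varepsilon t_2$ for $t\ge t_2$, one obtains $\sup_{t\ge t_2}|\sinh^{n-1}(\overline r_\varepsilon(t))/\sinh^{n-1}(k_\varepsilon t)-1|\to0$, and likewise $f_\varepsilon\to1-\theta^2$, $\overline r_\varepsilon'(t)/k_\varepsilon\to1$, $\vartheta_\varepsilon\to\kappa(1-\theta)$, uniformly on $[t_2,\infty)$. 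Hence $\psi_\varepsilon(t)=(1+o_\delta(1))\,\mathbf{C}\,e^{-(n-1)ht}\sinh^{n-1}(k_\varepsilon t)$ uniformly in $t\ge t_2$, where $\mathbf{C}>0$ depends only on $n,\theta,\kappa$ and $o_\delta(1)\to0$ as $\delta\to\infty$.

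\emph{Assembling the one-dimensional estimates.} Split $I$ over the three regions $\rho_\varepsilon\in(t_1,t_2),(t_2,t_3),(t_3,t_4)$ using~\eqref{eq:0:F:trunc}, with $F^*_\varepsilon(D\rho_\varepsilon)=1$ and $F^*_\varepsilon(-D\rho_\varepsilon)=h_\varepsilon(\overline r_\varepsilon(t))$ (Remark~\ref{rem:k}/\ref{rem:k:d}). Since $h_\varepsilon$ is decreasing and $\overline r_\varepsilon(t)\ge\vartheta_\varepsilon t_2\ge(1-\theta)\kappa t_2$ for $t\ge t_2$, we have $h_\varepsilon(\overline r_\varepsilon(t))\le l_2^{k_\varepsilon,h}(\delta)\le1$ on $[t_2,t_4]$; on the two linear pieces bound $\psi_\varepsilon\le C^+e^{-(n-1)ht}\sinh^{n-1}(k_\varepsilon t)$, and on the middle piece use $|v'|^p=Wv^p\,G^{p'}/W\le Wv^p\,l_1^{k_\varepsilon,h}(\delta)$ (from~\eqref{eq:k:vdef}) while keeping $\psi_\varepsilon$. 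With the notation~\eqref{eq:k:I} this yields
\[
  I\le\frac{C^+v(t_2)^p\,\mathcal{I}^{k_\varepsilon,h}(w;t_1,t_2)}{(t_2-t_1)^p}+\frac{C^+v(t_3)^p\,\mathcal{I}^{k_\varepsilon,h}(w;t_3,t_4)}{(t_4-t_3)^p}+l_2^{k_\varepsilon,h}(\delta)^p\,l_1^{k_\varepsilon,h}(\delta)\int_{t_2}^{t_3}wWv^p\psi_\varepsilon.
\]
On the other hand $J\ge\int_{t_2}^{t_3}wWv^p\psi_\varepsilon\ge(1-o_\delta(1))\,\mathbf{C}\,\mathcal{I}^{k_\varepsilon,h}(wWv^p;t_2,t_3)$, so dividing (and using $J\ge\int_{t_2}^{t_3}wWv^p\psi_\varepsilon$ for the last term),
\[
  \frac{\lambda^pI}{J}\le\frac{\lambda^pC^+}{(1-o_\delta(1))\,\mathbf{C}}\,l_0^{k_\varepsilon,h}(\delta)+\lambda^p\,l_2^{k_\varepsilon,h}(\delta)^p\,l_1^{k_\varepsilon,h}(\delta).
\]
Letting $\delta\to\infty$ and invoking the three hypotheses ($l_0^{k_\varepsilon,h}\to0$, $l_1^{k_\varepsilon,h}\to1$, $l_2^{k_\varepsilon,h}\to1/\lambda$) gives $\limsup\lambda^pI/J\le\lambda^p(1/\lambda)^p=1$, which finishes the proof.

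\emph{Main obstacle.} The delicate step is the uniform two-sided control of $\psi_\varepsilon$ on $[t_2,t_3]$. Unlike the power weight $t^{n-1}$ of Theorem~\ref{thm:0:sharp}, the weight $\sinh^{n-1}$ is exponential, so the crude band $\overline r_\varepsilon(t)\in[k_\varepsilon(1-\theta)t,\,k_\varepsilon t]$ is useless for a lower bound (its endpoints produce $\sinh$-ratios that blow up as $t\to\infty$). One is forced to combine the refined asymptotics $\overline r_\varepsilon(t)=k_\varepsilon t+O(\varepsilon)$ with the precise rate at which the prescribed $\varepsilon(\delta)$ shrinks relative to $t_2(\delta)$ — which is exactly what the new hypothesis $l_2^{k_\varepsilon,h}(\delta)\to1/\lambda$ encodes, a hypothesis absent in Theorem~\ref{thm:0:sharp} because there $\varepsilon$ could be tuned to $\delta$ a posteriori.
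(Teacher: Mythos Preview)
Your proof is correct, and the overall architecture (tent truncation, polar reduction, the three-region split of $I$, the two-fold use of the lower bound on $J$) matches the paper. The one genuine divergence is how you obtain the \emph{lower} bound on the density factor $\psi_\varepsilon$ on $[t_2,t_3]$.

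You argue that $\psi_\varepsilon(t)=(1+o_\delta(1))\,\mathbf{C}\,e^{-(n-1)ht}\sinh^{n-1}(k_\varepsilon t)$ uniformly on $[t_2,\infty)$, by combining the refined asymptotic $\overline r_\varepsilon(t)=k_\varepsilon t+O(\varepsilon)$ with the relation $\varepsilon/t_2\to 0$ extracted from the hypothesis $l_2^{k_\varepsilon,h}\to 1/\lambda$. This works, and your mean-value estimate $|\log(\sinh a/\sinh b)|\le|a-b|\coth(\min(a,b))$ together with $\overline r_\varepsilon(t)\ge(1-\theta)\kappa t_2$ indeed gives the required uniform convergence. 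The paper, however, does \emph{not} use the $l_2$-hypothesis for this step: it proves a fixed two-sided bound $c\,\sinh^{n-1}(k_\varepsilon t)\le\psi_\varepsilon(t)\le C\,\sinh^{n-1}(k_\varepsilon t)$ valid for all $t\ge0$ and all sufficiently small $\varepsilon$, with $c,C$ independent of both. The lower bound there comes from a two-piece argument: on a fixed interval $[0,\widetilde t]$ the ratio $\sinh((1-\theta)k_\varepsilon t)/\sinh(k_\varepsilon t)$ is bounded below by compactness, while on $(\widetilde t,\infty)$ the identity
\[
\sinh(k_\varepsilon t-s_\varepsilon)=\sinh(k_\varepsilon t)\bigl(\cosh s_\varepsilon-\coth(k_\varepsilon t)\sinh s_\varepsilon\bigr)\ge\frac{\sinh(k_\varepsilon t)}{2\cosh s_\varepsilon}
\]
(valid once $k_\varepsilon t>2s_\varepsilon$) gives a clean constant. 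So your assertion in the ``Main obstacle'' paragraph that one is \emph{forced} to couple the density estimate to the rate $\varepsilon/t_2\to0$ is not accurate: the paper's route shows this coupling is avoidable, and the $l_2$-hypothesis is used there solely to handle $h_\varepsilon$ in the gradient term. What each approach buys: the paper's gives a cleaner separation (the density bounds are decoupled from the choice of $t_i,\varepsilon$), while yours is more direct and avoids the small-$t$/large-$t$ case split at the price of invoking an assumption earlier than necessary.
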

\begin{proof}

    Let \(\varepsilon>0\) and \(T=(t_1,t_2,t_3,t_4)\). As before, define the limit function \(u^\star_\varepsilon \colon \Omega\to (0,\infty)\) of~\eqref{eq:k:RP:int:lam} and its truncation \(u^\star_{\varepsilon,T}\) by
    \[u^\star_\varepsilon(x)=v(\rho_\varepsilon(x)),\quad\mbox{and}\quad u^\star_{\varepsilon,T}(x)=v_{\varepsilon,T}(\rho_\varepsilon(x)),\]
    where
    \begin{equation}\label{eq:k:trunc}
        v_{\varepsilon,T}(t)=\begin{cases}
            0,                           & \mbox{if }0<t\le t_1,      \\
            v(t_2)\frac{t-t_1}{t_2-t_1}, & \mbox{if }t_1\le t\le t_2, \\
            v(t),                        & \mbox{if }t_2\le t\le t_3, \\
            v(t_3)\frac{t_4-t}{t_4-t_3}, & \mbox{if }t_3\le t\le t_4, \\
            0,                           & \mbox{if }t_4\le t<R,
        \end{cases}\quad\mbox{and}\quad v_{\varepsilon,T}'(t)=\begin{cases}
            0,                       & \mbox{if }0<t<t_1,   \\
            \frac{v(t_2)}{t_2-t_1},  & \mbox{if }t_1<t<t_2, \\
            v'(t),                   & \mbox{if }t_2<t<t_3, \\
            -\frac{v(t_3)}{t_4-t_3}, & \mbox{if }t_3<t<t_4, \\
            0,                       & \mbox{if }t_4< t<R.
        \end{cases}
    \end{equation}

    Proceeding similarly to the proof of Theorem~\ref{thm:0:sharp}, we reduce the following integrals to one dimension:
    \[
        I\defeq \frac{1}{n\omega}\int_{\Omega_\varepsilon} w(\rho_\varepsilon) F^*_\varepsilon(Du^\star_{\varepsilon,T})^p\diff \mathsf{m}_\varepsilon\quad\mbox{and}\quad
        J\defeq\frac{1}{n\omega}\int_{\Omega_\varepsilon} w(\rho_\varepsilon) W(\rho_\varepsilon) (u_{\varepsilon,T}^\star)^p\diff \mathsf{m}_\varepsilon.
    \]
    For simplicity, denote \(r=|x|\), \(\overline{r}=\operatorname{arctanh}(r)\), and \(\vartheta_{\varepsilon}=k_\varepsilon(1-\theta)\).  Introduce the following notations:
    \[
        t\defeq\rho_\varepsilon(x)=\overline{r}\cdot \frac{\overline{r}(1-\theta)+ \varepsilon }{\vartheta_{\varepsilon}(\overline{r}+\varepsilon)}
        \quad\text{and}\quad e(t)\defeq \exp(-(n-1)ht).
    \]
    By a simple computation, we obtain that
    \begin{align*}
        r                                  & =\tanh \varphi_\varepsilon(t),                                                                                               & \text{where}\quad \varphi_\varepsilon(t)  & =\frac{t\vartheta_\varepsilon -\varepsilon+\sqrt{(t\vartheta_\varepsilon -\varepsilon)^2+4t\vartheta_\varepsilon\varepsilon(1-\theta)}}{2(1-\theta)},                                        \\
        \diff r                            & =\frac{1}{(\cosh\varphi_\varepsilon(t))^2}\varphi_\varepsilon'(t)\diff t,                                               & \text{where}\quad \varphi_\varepsilon'(t) & =\frac{k}{2}\left(1+\frac{t\vartheta_\varepsilon-\varepsilon+2\varepsilon(1-\theta)}{\sqrt{(t\vartheta_\varepsilon -\varepsilon)^2+4t\vartheta_\varepsilon\varepsilon(1-\theta)}}\right)   , \\
        \sigma_\varepsilon                 & =  e(t)\cdot \frac{1}{\vartheta_\varepsilon^n} (\cosh\varphi_\varepsilon(t))^{n+1} (f_\varepsilon(t))^\frac{n+1}{2},         &
        \text{where}\quad f_\varepsilon(t) & =1-\frac{\varphi_\varepsilon^2(t) (\varphi_\varepsilon(t)+2 \varepsilon)^2\theta^2}{(\varphi_\varepsilon(t)+\varepsilon)^4}.
    \end{align*}
    It follows that
    \begin{align}
        \sigma_\varepsilon r^{n-1}\diff r
         & =
        e(t)\cdot\varphi'_\varepsilon(t)
        \frac{1}{\vartheta_\varepsilon^n}(f_\varepsilon(t))^\frac{n+1}{2}
        (\sinh \varphi_\varepsilon(t))^{n-1}\diff t
        \defeq e(t)\cdot\psi_\varepsilon(t)\diff t.
        \label{eq:k:dm}
    \end{align}

    In the sequel, we provide suitable bounds for \(\psi_\varepsilon(t)\). First, we observe that
    \[
        (1-\theta)k_\varepsilon t \le \varphi_\varepsilon(t)\le k_\varepsilon t,\quad
        (1-\theta)k_\varepsilon \le \varphi_\varepsilon'(t)\le k_\varepsilon,\quad\mbox{and}\quad
        1-\theta^2 \le f_\varepsilon(t)\le 1,\quad \forall t\ge 0.
    \]
    Next, we show that there exists \(\widetilde{\varepsilon}>0\) and \(\widetilde c>0\) such that
    \begin{equation}\label{eq:k:sinhapprox}
        \widetilde{c}\cdot \sinh(k_\varepsilon t)\le \sinh(\varphi_\varepsilon(t))\le \sinh(k_\varepsilon t),\quad \forall t\ge 0,\varepsilon\in(0,\widetilde\varepsilon).
    \end{equation}
    Since the hyperbolic sine is increasing, we obtain
    \[
        \sinh((1-\theta)k_\varepsilon t)\le \sinh(\varphi_\varepsilon(t))\le \sinh(k_\varepsilon t),\quad \forall t\ge 0, \varepsilon>0,
    \]
    thus, the second inequality of~\eqref{eq:k:sinhapprox} holds. To verify the first inequality, define
    \[
        \psi(k,t)=\frac{\sinh((1-\theta)k t)}{\sinh(kt)},\quad \forall k>0,t\ge 0.
    \]
    Recall from Remark~\ref{rem:k}/\ref{rem:k:c} that there exists \(\overline \varepsilon>0\) and \(\overline\kappa>\kappa\) such that \[ \kappa \le k_{\varepsilon}\le \overline \kappa,\quad \forall\varepsilon\in(0,\overline\varepsilon).\]
    
    On the one hand, observe that \(t\mapsto \psi(\overline \kappa,t)\) is continuous and
    \(\psi(\overline \kappa,t)\to 1-\theta>0\) as \(t\to 0\),
    which implies that there exists \(\widetilde{c_1}>0\) and \(\widetilde{t}>0\) such that
    \[
        \sinh((1-\theta)\overline \kappa  t)\ge \widetilde{c_1}\cdot \sinh(\overline \kappa  t),\quad \forall t\in[0,\widetilde{t}].
    \]
    On the other hand, since \(k\mapsto \psi(k,t)\) is decreasing, the above estimate holds for every \(k_\varepsilon\in[\kappa,\overline\kappa]\). Hence, 
    \[
        \sinh((1-\theta)k_{\varepsilon}  t)\ge \widetilde{c_1}\cdot \sinh(k_{\varepsilon}  t),\quad \forall t\in[0,\widetilde{t}], \varepsilon\in(0,\overline\varepsilon).
    \]
    
    Now we focus on the case when \(t>\widetilde{t}\). Studying the asymptotic behavior of \(\varphi_\varepsilon\) we obtain 
    \[
        \varphi_\varepsilon(t)\ge t k_\varepsilon-\frac{\varepsilon\theta}{1-\theta}\defeq tk_\varepsilon-s_\varepsilon,\quad \forall t>0, \varepsilon\in(0,\overline\varepsilon).
    \]
    Note that \(k_\varepsilon\to\kappa\) and \(s_\varepsilon\to0\) as \(
    \varepsilon\to 0\). Thus, since \(\widetilde{t}>0\), there exists an \(\widetilde{\varepsilon}\in(0,\overline{\varepsilon})\) such that 
    \begin{equation}\label{eq:k:etildedef}
        \varphi_\varepsilon(\widetilde t)\ge \widetilde{t}k_\varepsilon-s_\varepsilon>s_\varepsilon>0,\quad \forall \varepsilon\in(0,\widetilde \varepsilon).   
    \end{equation}
    Since \(t\mapsto \varphi_\varepsilon(t)\) is increasing, we also have 
    \begin{equation}\label{eq:k:double}
        \varphi_\varepsilon(t)\ge tk_\varepsilon-s_\varepsilon>s_\varepsilon>0,\quad \forall \varepsilon\in(0,\widetilde \varepsilon),t>\widetilde{t}.
    \end{equation}
    It follows that
    \begin{align*}
        \sinh(\varphi_\varepsilon(t)) & \ge \sinh(tk_\varepsilon-s_\varepsilon)=\sinh(tk_\varepsilon)\cosh(s_\varepsilon)-\cosh(tk_\varepsilon)\sinh(s_\varepsilon) \\
                                      & =\sinh(tk_\varepsilon)\cdot\left(\cosh(s_\varepsilon)-\coth(tk_\varepsilon)\sinh(s_\varepsilon)\right).
    \end{align*}
    Since the hyperbolic cotangent is decreasing, using relation~\eqref{eq:k:double}, we obtain that
    \begin{align*}
        \sinh(\varphi_\varepsilon(t)) & \ge \sinh(tk_\varepsilon)\cdot \left(\cosh(s_\varepsilon)-\coth(2s_\varepsilon)\sinh(s_\varepsilon)\right)= \sinh(tk_\varepsilon)\cdot \frac{1}{2\cosh(s_\varepsilon)}
    \end{align*}
    Define \(s_{\widetilde{\varepsilon}}=\frac{\widetilde{\varepsilon}\theta}{1-\theta}\), the monotonicity of the hyperbolic cosine implies
    \begin{align*}
        \sinh(\varphi_\varepsilon(t)) & \ge \sinh(tk_\varepsilon)\cdot \frac{1}{2\cosh(s_{\widetilde{\varepsilon}})}\defeq \widetilde{c_2}\cdot \sinh(tk_\varepsilon).
    \end{align*}
    Thus, \(\widetilde{c}=\min\{\widetilde{c_1},\widetilde{c_2}\}\) and \(\widetilde{\varepsilon}\) yield inequality~\eqref{eq:k:sinhapprox}, for every \(t\ge 0\).

    Combining the above estimates, we obtain for every \(t\ge 0\) and \(0<\varepsilon<\widetilde{\varepsilon}\) that
    \begin{equation}\label{eq:k:cdef}
        c \sinh(k_\varepsilon t)^{n-1}\defeq \frac{(1-\theta^2)^\frac{n+1}{2}(\widetilde{c})^{n-1}}{\vartheta_{\varepsilon}^{n-1}}\sinh(k_\varepsilon t)^{n-1} \le \psi_\varepsilon(t)\le \frac{k_\varepsilon}{\vartheta^n}\sinh(k_\varepsilon t)^{n-1}\defeq C  \sinh(k_\varepsilon t)^{n-1}.
    \end{equation}

    Since \(v\) is positive and decreasing, the positive homogeneity of \(F^*_\varepsilon\) and the relations from~\eqref{eq:k:trunc} imply
    \begin{equation}\label{eq:k:F:trunc}
        F^*_\varepsilon(Du^\star_{\varepsilon,T})=F^*_\varepsilon(v_{\varepsilon,T}'(\rho_\varepsilon)D\rho_\varepsilon)=\begin{cases}
            \frac{v(t_2)}{t_2-t_1}F^*_\varepsilon(D\rho_\varepsilon), & \text{if } \rho_\varepsilon\in(t_1,t_2),         \\
            |v'(\rho_\varepsilon)|F^*_\varepsilon(-D\rho_\varepsilon) & \text{if } \rho_\varepsilon\in(t_2,t_3),         \\
            \frac{v(t_3)}{t_4-t_3}F^*_\varepsilon(-D\rho_\varepsilon) & \text{if } \rho_\varepsilon\in(t_3,t_4),         \\
            0,                                                        & \text{if }\rho_\varepsilon\in(0,t_1)\cup(t_4,R).
        \end{cases}
    \end{equation}
    According to Remark~\ref{rem:k}/\ref{rem:k:d}, one has
    \[F^*_\varepsilon(D\rho_\varepsilon)=1,\quad\mbox{and}\quad F^*_\varepsilon(-D\rho_\varepsilon)=\frac{1-\frac{\overline{r}(\overline{r}+2\varepsilon)}{(\overline{r}+\varepsilon)^2}\theta}{1+\frac{\overline{r}(\overline{r}+2\varepsilon)}{(\overline{r}+\varepsilon)^2}\theta}\defeq h_\varepsilon(\overline{r}),\quad\mbox{where}\quad \frac{1-\theta}{1+\theta}\le h_\varepsilon(\overline{r})\le1.\]
    Observe that \(h_\varepsilon(\overline{r})\) is decreasing in \( \overline{r}\), increasing in \(\varepsilon\), and approaches to \(\frac{1-\theta}{1+\theta}=\frac{1}{\lambda}\) as \(\varepsilon\to 0\). Clearly one has 
    \[\overline r=\varphi_\varepsilon(t)\ge (1-\theta)k_\varepsilon t\ge(1-\theta)\kappa t,\] thus, we also obtain
    \begin{equation}\label{eq:k:hred}
        h_\varepsilon(\overline{r})=h_\varepsilon(\varphi_\varepsilon(t))\le h_\varepsilon((1-\theta)\kappa t)\le 1.
    \end{equation}
    As before, denote by \(\omega\) the volume of the Euclidean unit ball of dimension \(n\).

    On the one hand, relations~\eqref{eq:k:dm}~and~\eqref{eq:k:F:trunc} yield
    \begin{align*}
        I
         & =\frac{v(t_2)^p}{(t_2-t_1)^p}\int_{t_1}^{t_2} w(t) \psi_\varepsilon(t)e(t)\diff t  + \int_{t_2}^{t_3} w(t)|v'(t)|^p h_\varepsilon^p(\varphi_\varepsilon(t)) \psi_\varepsilon(t)e(t)\diff t
        \\
         & \qquad +\frac{v(t_3)^p}{(t_4-t_3)^p}\int_{t_3}^{t_4} w(t) h_\varepsilon^p(\varphi_\varepsilon(t))\psi_\varepsilon(t)e(t)\diff t.
    \end{align*}
    Applying relations~\eqref{eq:k:cdef},~\eqref{eq:k:hred}, and using the definition of \(\mathcal{I}^{k,h}\) from~\eqref{eq:k:I}, we obtain
    \begin{align*}
        I
         & \le \frac{v(t_2)^p}{(t_2-t_1)^p}\int_{t_1}^{t_2} w(t) \psi_\varepsilon(t)e(t)\diff t+ \int_{t_2}^{t_3} w(t)|v'(t)|^p h_\varepsilon^p((1-\theta)\kappa t) \psi_\varepsilon(t)e(t)\diff t                                                         \\
         & \qquad +\frac{v(t_3)^p}{(t_4-t_3)^p}\int_{t_3}^{t_4} w(t) h_\varepsilon^p((1-\theta)\kappa t)\psi_\varepsilon(t)e(t)\diff t
        \\&\le \frac{Cv(t_2)^p}{(t_2-t_1)^p}\int_{t_1}^{t_2} w(t) \sinh(k_\varepsilon t)^{n-1}e(t)\diff t+ h_\varepsilon^p((1-\theta)\kappa t_2)\int_{t_2}^{t_3} w(t)|v'(t)|^p \psi_\varepsilon(t)e(t)\diff t \\
         & \qquad +\frac{C h_\varepsilon^p((1-\theta)\kappa t_3)v(t_3)^p}{(t_4-t_3)^p}\int_{t_3}^{t_4} w(t)\sinh(k_\varepsilon t)^{n-1}e(t)\diff t                                                                                                                \\
         & \le \frac{Cv(t_2)^p\cdot \mathcal{I}^{k_\varepsilon,h}(w;t_1,t_2)}{(t_2-t_1)^p}+ h_\varepsilon^p((1-\theta)\kappa t_2)\int_{t_2}^{t_3} w(t)|v'(t)|^p \psi_\varepsilon(t)e(t)\diff t +\frac{C v(t_3)^p\cdot\mathcal{I}^{k_\varepsilon,h}(w;t_3,t_4)}{(t_4-t_3)^p}.
    \end{align*}
   
    By relation~\eqref{eq:k:vdef}, for every \(t\in[t_2,t_3]\) one has
    \[
        \frac{1}{W(t)}\left|\frac{v'(t)}{v(t)}\right|^p=\frac{1}{W(t)}\left(-\frac{v'(t)}{v(t)}\right)^p=\frac{G(t)^{p'}}{W(t)}.
    \]
    The above function is continuous, thus, it attains it maximum on \([t_2,t_3]\), hence we have
    \begin{align*}
         \int_{t_2}^{t_3} w(t)|v'(t)|^p \psi_\varepsilon(t)\diff t&=\int_{t_2}^{t_3} \frac{1}{W(t)}\left|\frac{v'(t)}{v(t)}\right|^p w(t) W(t)(v(t))^p \psi_\varepsilon(t)\diff t\\
         &\le  \max_{t\in[t_2,t_3]} \frac{G(t)^{p'}}{W(t)}\cdot \int_{t_2}^{t_3} w(t) W(t) (v(t))^p \psi_\varepsilon(t)e(t)\diff t.
    \end{align*}

         On the other hand, we obtain
    \begin{align}\label{eq:k:estimates}
        J=\int_{0}^{R} w(t) W(t) (v_{\varepsilon,T}(t))^p \psi_\varepsilon(t)e(t)\diff t
         & \ge \int_{t_2}^{t_3} w(t) W(t) (v(t))^p \psi_\varepsilon(t)e(t)\diff t\ge c\cdot \mathcal{I}^{k_\varepsilon,h}(wW v^p;t_2,t_3).
    \end{align}
    Combining the above results, and using both inequalities of~\eqref{eq:k:estimates}, it follows that
    \begin{align*}
        \frac{\lambda^p I}{J}
         & \le \lambda^p h_\varepsilon^p((1-\theta)\kappa t_2)\cdot  \max_{t\in[t_2,t_3]} \frac{G(t)^{p'}}{W(t)}                                                                                                                                                                              \\
         & \qquad +\frac{\lambda^p C}{c}\left(\frac{v(t_2)^p\cdot \mathcal{I}^{k_\varepsilon,h}(w;t_1,t_2)}{(t_2-t_1)^p\cdot \mathcal{I}^{k_\varepsilon,h}(wW v^p;t_2,t_3)}+\frac{v(t_3)^p\cdot \mathcal{I}^{k_\varepsilon,h}(w;t_3,t_4)}{(t_4-t_3)^p\cdot \mathcal{I}^{k_\varepsilon,h}(wW v^p;t_2,t_3)}\right) \\
         & =\lambda^p\cdot (l_2^{k_\varepsilon,h}(\delta))^p\cdot l_1^{k_\varepsilon,h}(\delta)+\frac{\lambda^p C}{c}\cdot l_0^{k_\varepsilon,h}(\delta).
    \end{align*}
    By our assumptions, the last relation approaches \(1\) as \(\delta\to\infty\), thus, the inequality~\eqref{eq:k:RP:int:lam} is sharp on \(\mathcal{F}_{\kappa,h,\lambda}\).
\end{proof}
Observe that in contrast with Theorem~\ref{thm:0:sharp}, the limits appearing in the statement of the above result are strongly dependent on \(\varepsilon\).  The reason for this stems from the observation of Remark~\ref{rem:k}/\ref{rem:k:a}: Since the flag curvature is not increasing along \(\nabla\rho\), we need to work with \(k_\varepsilon\) instead of \(\kappa\), which explains the dependence on \(\varepsilon\). Fortunately, by suitable choices of \(\varepsilon(\delta)\), one can provide sharpness results in this setting, as well. Such an example is presented in the sequel.

\subsection{The sharpness of a McKean-type spectral gap estimate}\label{sec:k:applications}
In this section, we prove a sharpness result concerning a McKean-type spectral gap estimate on \(\mathcal{F}_{\kappa,h,\lambda}\) (which for \(p=2\) reduces to Theorem~\ref{thm:intro:McKean:sharp}). Our last result can be stated as follows.

\begin{theorem}\label{thm:k:app:sharp}  Let \(n\ge 2\), \(\lambda,p\in (1,\infty)\), and \(\kappa>h\ge0\). For every \((\mathbb{B}^n,F_\varepsilon,\mathsf{m}_\varepsilon)\in \mathcal{F}_{\kappa,h,\lambda}\) and \(u\in C_0^\infty(\mathbb{B}^n)\) one has
    \begin{equation}\label{eq:k:app:McKean}
        \lambda^p\displaystyle\int_{\mathbb{B}^n} F_\varepsilon^*(Du)^p\diff \mathsf{m}_\varepsilon\ge c_{n,p,\kappa,h}\int_{\mathbb{B}^n} |u|^p \diff \mathsf{m}_\varepsilon,\quad\text{where}\quad c_{n,p,\kappa,h}=\left(\frac{(n-1)(\kappa-h)}{p}\right)^p.
    \end{equation}
    Moreover, \(c_{n,p,\kappa,h}\) is \emph{sharp} in \(\mathcal{F}_{\kappa,h,\lambda}\), in the sense that it is the greatest constant with this property.
\end{theorem}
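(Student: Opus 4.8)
The plan is to deduce~\eqref{eq:k:app:McKean} from the Riccati-pair machinery and its sharpness from Theorem~\ref{thm:k:sharp}, paralleling the proof of Theorem~\ref{thm:0:app:sharp}. For the inequality I would apply Theorem~\ref{thm:RP} on each $(\mathbb{B}^n,F_\varepsilon,\mathsf{m}_\varepsilon)\in\mathcal{F}_{\kappa,h,\lambda}$ with $\Omega=\mathbb{B}^n$, $x_0=O$, $\rho=\rho_\varepsilon$, and the constant $(p,\rho_\varepsilon,w)$-Riccati pair
\[
w\equiv 1,\qquad L\equiv (n-1)(\kappa-h),\qquad W\equiv c_{n,p,\kappa,h},\qquad G\equiv c_{n,p,\kappa,h}^{(p-1)/p},
\]
i.e.\ the same choice as in the proof of Theorem~\ref{thm:McKean:genproof}. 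The space $(\mathbb{B}^n,F_\varepsilon)$ is forward complete and non-compact (by the formula for $\rho_\varepsilon$ in Theorem~\ref{thm:k}, $\rho_\varepsilon\to\infty$ at $\partial\mathbb{B}^n$); condition~\ref{cond:RP:c2} follows from the Laplace comparison theorem since $\Delta_{F_\varepsilon}\rho_\varepsilon\ge(n-1)\kappa\coth(\kappa\rho_\varepsilon)-(n-1)h\ge(n-1)(\kappa-h)$; and~\eqref{eq:RP:ODI} reduces to $G\,L-(p-1)G^{p'}=c_{n,p,\kappa,h}$, which holds with equality. The hypothesis $n\ge 3$ in Theorem~\ref{thm:McKean:genproof} is irrelevant here: $c_{n,p,\kappa,h}>0$ already for $n\ge 2$, which is all Theorem~\ref{thm:RP} needs. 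The outer members of~\eqref{eq:RP} then yield~\eqref{eq:k:app:McKean}.

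\textbf{Sharpness.} For sharpness I would invoke Theorem~\ref{thm:k:sharp} with $R=+\infty$, $\Omega_\varepsilon=\mathbb{B}^n$, and the same Riccati pair; then~\eqref{eq:k:vdef} gives $v(t)=e^{-at}$ with $a=\frac{(n-1)(\kappa-h)}{p}$, so $v(t)^p=e^{-(n-1)(\kappa-h)t}$ and $l_1^{k_\varepsilon,h}(\delta)=\max_{t\in[t_2,t_3]}G^{p'}/W\equiv 1$. It remains to choose $t_i(\delta)$ and $\varepsilon(\delta)$ so that $l_2^{k_\varepsilon,h}(\delta)=h_\varepsilon((1-\theta)\kappa t_2)\to 1/\lambda$ and $l_0^{k_\varepsilon,h}(\delta)\to 0$. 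I would take $t_1(\delta)=\delta$, $t_2(\delta)=\delta+1$, $t_3(\delta)=2\delta$, $t_4(\delta)=2\delta+1$, so the middle interval $[t_2,t_3]$ has length tending to $\infty$ while the two outer intervals keep length $1$, and I would pick $\varepsilon(\delta)\to 0$ fast enough that both $(k_{\varepsilon(\delta)}-\kappa)\,\delta\to 0$ and $\delta/\varepsilon(\delta)\to\infty$ — both are possible since $k_\varepsilon\to\kappa$ as $\varepsilon\to 0$ (Remark~\ref{rem:k}/\ref{rem:k:c}), e.g.\ by requiring $\varepsilon(\delta)\le\delta^{-2}$ and $k_{\varepsilon(\delta)}-\kappa\le\delta^{-2}$. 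The second requirement forces $(1-\theta)\kappa t_2/\varepsilon(\delta)\to\infty$, whence $h_\varepsilon((1-\theta)\kappa t_2)\to\frac{1-\theta}{1+\theta}=\frac1\lambda$, i.e.\ $l_2^{k_\varepsilon,h}(\delta)\to 1/\lambda$.

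\textbf{The limit $l_0\to 0$.} Here I would use the identity $e^{-(n-1)\kappa t}\sinh(k_\varepsilon t)^{n-1}=2^{-(n-1)}\bigl(e^{(k_\varepsilon-\kappa)t}-e^{-(k_\varepsilon+\kappa)t}\bigr)^{n-1}$. On the whole window $[t_1,t_4]=[\delta,2\delta+1]$ one has $(k_{\varepsilon(\delta)}-\kappa)t\le(k_{\varepsilon(\delta)}-\kappa)(2\delta+1)\to 0$ and $e^{-(k_\varepsilon+\kappa)t}\le e^{-2\kappa\delta}\to 0$, so $\mathcal{I}^{k_\varepsilon,h}(wWv^p;t_2,t_3)=c_{n,p,\kappa,h}\int_{t_2}^{t_3}e^{-(n-1)\kappa t}\sinh(k_\varepsilon t)^{n-1}\diff t\ge c_{n,p,\kappa,h}\,2^{-n}(\delta-1)$ for large $\delta$, hence diverges; on the other hand, from $\sinh(k_\varepsilon t)^{n-1}e^{-(n-1)ht}\le 2^{-(n-1)}e^{(n-1)(k_\varepsilon-h)t}$ and the exponential decay of $v^p$ at the inner endpoints one gets $v(t_2)^p\mathcal{I}^{k_\varepsilon,h}(w;t_1,t_2)\le 2^{-(n-1)}e^{(n-1)(k_\varepsilon-\kappa)(\delta+1)}$ and $v(t_3)^p\mathcal{I}^{k_\varepsilon,h}(w;t_3,t_4)\le 2^{-(n-1)}e^{(n-1)[(k_\varepsilon-\kappa)2\delta+(k_\varepsilon-h)]}$, both bounded in $\delta$. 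Dividing these bounded numerators by $(t_2-t_1)^p=(t_4-t_3)^p=1$ and by $\mathcal{I}^{k_\varepsilon,h}(wWv^p;t_2,t_3)$ shows each summand of $l_0^{k_\varepsilon,h}(\delta)$ is $O(1/\delta)$, so $l_0^{k_\varepsilon,h}(\delta)\to 0$. By Theorem~\ref{thm:k:sharp}, $c_{n,p,\kappa,h}$ is sharp on $\mathcal{F}_{\kappa,h,\lambda}$; taking $p=2$ recovers Theorem~\ref{thm:intro:McKean:sharp}.

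\textbf{Main obstacle.} The delicate point, absent in Theorem~\ref{thm:0:app:sharp}, is the mismatch between $k_\varepsilon$ and $\kappa$ of Remark~\ref{rem:k}/\ref{rem:k:a}: since the flag curvature along $\nabla\rho_\varepsilon$ is not monotone, the construction forces $k_\varepsilon>\kappa$, so the volume element carries $\sinh(k_\varepsilon t)^{n-1}$ while the model constant is governed by $\kappa$. The crux is the joint calibration of $\varepsilon(\delta)$ and $t_i(\delta)$: $\varepsilon(\delta)$ must go to $0$ fast enough that $k_{\varepsilon(\delta)}-\kappa$ is negligible over the whole truncation window, yet slowly enough relative to $t_2(\delta)$ that $h_\varepsilon$ saturates at $1/\lambda$; and, unlike in the non-positive-curvature case, the two outer truncation intervals must stay of bounded length, since over an unbounded outer interval the exponentially growing volume element would destroy the bound on the Dirichlet-type energy $I$.
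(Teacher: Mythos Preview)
Your argument is correct and complete. Both the inequality (via Theorem~\ref{thm:RP} with the same constant Riccati pair as in Theorem~\ref{thm:McKean:genproof}) and the sharpness (via Theorem~\ref{thm:k:sharp}) are handled properly; the verification that \(l_1\equiv 1\), \(l_2\to 1/\lambda\), and \(l_0\to 0\) is sound.

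Your route differs from the paper's in the choice of truncation window and of \(\varepsilon(\delta)\). The paper takes \(t_i(\delta)=i\delta\) (all three subintervals of length \(\delta\)) and simply \(\varepsilon=1/\delta\), then estimates the two summands of \(l_0\) by the monotonicity of the integrands \(\sinh(k_\varepsilon t)^{n-1}e^{-(n-1)ht}\) and \(\sinh(k_\varepsilon t)^{n-1}e^{-(n-1)\kappa t}\). You instead keep the outer intervals of length \(1\), let the middle interval grow, and additionally force \((k_{\varepsilon(\delta)}-\kappa)\delta\to 0\); this lets you control everything with the crude bound \(\sinh(k_\varepsilon t)\le\tfrac12 e^{k_\varepsilon t}\) and the identity \(e^{-(n-1)\kappa t}\sinh(k_\varepsilon t)^{n-1}=2^{-(n-1)}(e^{(k_\varepsilon-\kappa)t}-e^{-(k_\varepsilon+\kappa)t})^{n-1}\). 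What your approach buys is robustness: the exponential mismatch between \(k_\varepsilon\) and \(\kappa\) is neutralised explicitly, and the estimate of the second summand of \(l_0\) is transparent. The paper's monotonicity shortcut is lighter in that it does not need a rate on \(k_\varepsilon-\kappa\); on the other hand, for the second summand (the one over \([t_3,t_4]\)) a naive ``max of numerator over min of denominator'' bound actually diverges with their \(t_i\), so one has to pair the endpoints more carefully than in the first summand---your fixed-length outer intervals sidestep this subtlety entirely. Your closing remark that the outer intervals \emph{must} stay bounded is a bit strong as a necessity claim, but it is certainly a clean sufficient device and your diagnosis of the underlying obstacle (the \(k_\varepsilon\) versus \(\kappa\) gap from Remark~\ref{rem:k}/\ref{rem:k:a}) is exactly right.
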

\begin{proof}
    The validity of inequality~\eqref{eq:k:app:McKean} follows from Theorem~\ref{thm:McKean:genproof}. To prove the sharpness, we use Theorem~\ref{thm:k:sharp}. Recall the definition of \(\widetilde\varepsilon\) from~\eqref{eq:k:etildedef}. Let \(\delta>1\), such that \(\varepsilon=\frac{1}{\delta}\in(0,\widetilde\varepsilon)\), and define
    \[
        t_1(\delta)=\delta,\quad
        t_2(\delta)=2\delta,\quad
        t_3(\delta)=3\delta,\quad\mbox{and}\quad
        t_4(\delta)=4\delta.
    \]
    By a simple computation, we obtain \[l_2^{k_\varepsilon,h}=h_\varepsilon((1-\theta)\kappa t_2)=\frac{1+4\delta^2(1-\theta)^2\kappa+4\delta^4(1-\theta)^3\kappa^2}{1+4\delta^2(1-\theta^2)\kappa+4\delta^4(1-\theta)^2(1+\theta)\kappa^2}\to \frac{1-\theta}{1+\theta}=\lambda,\quad\text{as}\quad \delta\to \infty.\]
    Consider the following functions (see also the proof of Theorem~\ref{thm:McKean:genproof}): 
    \[
        w(t)= 1,
        \quad W(t)=c_{n,p,\kappa,h},
        \quad G(t)\equiv (c_{n,p,\kappa,h})^\frac{p-1}{p},\quad\mbox{and}\quad v(t)^p=e^{-(n-1)t (\kappa-h)},\quad \forall t>0.
    \]
    Since \(k_\varepsilon>\kappa>h\), the following expressions are  strictly increasing in \(t\):
    \begin{align*}
        w(t)\sinh(k_\varepsilon t)^{n-1}e^{-(n-1)h t}           & = \sinh(k_\varepsilon t)^{n-1}e^{-(n-1)h t},                     \\
        w(t)W(t)v(t)^p\sinh(k_\varepsilon t)^{n-1}e^{-(n-1)h t} & =c_{n,p,\kappa,h}\sinh(k_\varepsilon t)^{n-1}e^{-(n-1)\kappa t}.
    \end{align*}
    On the one hand, we have 
    \begin{align*}
        \frac{v(t_2)^p\cdot\mathcal{I}^{k_\varepsilon,h}(w;t_1,t_2)}{(t_2-t_1)^p\cdot \mathcal{I}^{k_\varepsilon,h}(wWv^p;t_2,t_3)}
         & \le \frac{v(t_2)^p}{(t_2-t_1)^p}\cdot \frac{(t_2-t_1)\sinh(k_\varepsilon t_1)^{n-1} e^{-(n-1)ht_1}}{(t_3-t_2)c_{n,p,\kappa,h}\sinh(k_\varepsilon t_3)^{n-1}e^{-(n-1)h t_3}} \\
         & =\frac{e^{(n-1)\delta(\kappa+h)}\sinh(\delta k_\varepsilon)^{n-1}}{\delta^pc_{n,p,\kappa,h} \sinh(3\delta k_\varepsilon)^{n-1}}\\
         &= \frac{e^{(n-1)\delta(\kappa+h-2k_\varepsilon)}}{\delta^pc_{n,p,\kappa,h}}\cdot \frac{e^{(n-1)\delta(2k_\varepsilon)}\sinh(\delta k_\varepsilon)^{n-1}}{\sinh(3\delta k_\varepsilon)^{n-1}}\to 0\cdot 1=0, \quad\text{as}\quad \delta\to\infty.
    \end{align*}
    On the other hand, observe that
    \begin{align*}
        \frac{v(t_3)^p\cdot\mathcal{I}^{k_\varepsilon,h}(w;t_3,t_4)}{(t_4-t_3)^p\cdot \mathcal{I}^{k_\varepsilon,h}(wWv^p;t_2,t_3)}
        &\le \frac{v(t_3)^p}{(t_4-t_3)^p}\cdot \frac{(t_4-t_3)\sinh(\kappa t_3)^{n-1} e^{-(n-1)ht_3}}{(t_3-t_2)c_{n,p,\kappa,h}\sinh(\kappa t_3)^{n-1}e^{-(n-1)\kappa t_3}} \\
        &=\frac{1}{\delta^pc_{n,p,\kappa,h} }\to 0,\quad\mbox{as }\delta\to\infty.
    \end{align*}
    Combining the above two limits, we obtain \(l_1^{k_\varepsilon,h}(\delta)\to 0\) as \(\delta\to\infty\).

    Finally, since 
    \[
        \frac{G(t)^{p'}}{W(t)}=1, \quad\forall t>0,
    \]
    we also obtain \(l_1^{k_\varepsilon,h}(\delta)=1\).
    Thus, Theorem~\ref{thm:k:sharp} yields the sharpness of the inequality~\eqref{eq:k:app:McKean}, which concludes the paper.
\end{proof}

\noindent\textbf{Acknowledgement.} The author would like to thank prof.~Alexandru Kristály for the valuable discussions on the subject of the paper. He is also grateful to the anonymous reviewer for carefully reading the article and providing valuable comments.


\end{document}